\renewcommand{\mathcal}{\mathscr}
\newtheorem{theo}{Theorem}[section]
\newtheorem*{theoremA}{Theorem A}
\newtheorem*{theoremB}{Theorem B}
\newtheorem*{corollaryC}{Corollary C}
\newtheorem*{theoremD}{Theorem D}
\newtheorem*{corollaryE}{Corollary E}
\newtheorem{cor}[theo]{Corollary}
\newtheorem{lem}[theo]{Lemma}
\newtheorem{prop}[theo]{Proposition}
\newtheorem{claim}[theo]{Claim}
\theoremstyle{definition}
\newtheorem{rem}[theo]{Remark}
\newtheorem{df}[theo]{Definition}
\newcommand{\R}{\mathbf{R}}
\newcommand{\C}{\mathbf{C}}
\newcommand{\Z}{\mathbf{Z}}
\newcommand{\F}{\mathbf{F}}
\newcommand{\N}{\mathbf{N}}
\newcommand{\m}{\operatorname{m}}
\newcommand{\Sd}{\operatorname{Sd}}
\newcommand{\pol}{\operatorname{pol}}
\newcommand{\Id}{\operatorname{Id}}
\newcommand{\Ad}{\operatorname{Ad}}
\newcommand{\Tr}{\operatorname{Tr}}
\newcommand{\Aut}{\operatorname{Aut}}
\newcommand{\SL}{\operatorname{SL}}
\newcommand{\dom}{\operatorname{dom}}
\newcommand{\op}{\operatorname{op}}
\newcommand{\co}{\operatorname{co}}
\newcommand{\Inn}{\operatorname{Inn}}
\newcommand{\Out}{\operatorname{Out}}
\newcommand{\cb}{\operatorname{cb}}
\newcommand{\QN}{\mathcal{Q}\mathcal{N}}
\newcommand{\Diag}{\operatorname{Diag}}
\newcommand{\Ch}{\mathcal{T}(H)}
\newcommand{\GG}{\Gamma(H_\R, U_t)}
\newcommand{\G}{\Gamma(H_\R, U_t)''}
\newcommand{\vect}{\operatorname{span}}
\newcommand{\weak}{\operatorname{weak}}
\begin{document}

\title[Free Araki-Woods factors]{Approximation properties and absence of Cartan subalgebra for free Araki-Woods factors}

\begin{abstract}
We show that all the free Araki-Woods factors $\Gamma(H_\R, U_t)''$ have the complete metric approximation property. Using Ozawa-Popa's techniques, we then prove that every nonamenable subfactor $\mathcal{N} \subset \Gamma(H_\R, U_t)''$ which is the range of a normal conditional expectation has no Cartan subalgebra. We finally deduce that the type ${\rm III_1}$ factors constructed by Connes in the '70s can never be isomorphic to any free Araki-Woods factor, which answers a question of Shlyakhtenko and Vaes.
\end{abstract}

\author[C. Houdayer]{Cyril Houdayer*}

\address{CNRS-ENS Lyon \\
UMPA UMR 5669 \\
69364 Lyon cedex 7 \\
France}

\thanks{*Research partially supported by ANR grant Agora NT09-461407}
\thanks{**Research partially supported by ANR grant 06-BLAN-0015}

\email{cyril.houdayer@ens-lyon.fr}

\author[\'E. Ricard]{\'Eric Ricard**}

\address{Laboratoire de math\'ematiques \\ 
Universit\'e de Franche-Comt\'e \\
 16 route de Gray \\
25030 Besan\c{c}on \\
 France}
 
 \email{eric.ricard@univ-fcomte.fr}

\subjclass[2000]{46L07; 46L10; 46L54}

\keywords{Cartan subalgebras; Type ${\rm III}$ factors; Complete metric approximation property; Deformation/rigidity; Free probability}

\maketitle

\section{Introduction and statement of the main results}

The {\it free Araki-Woods factors} were introduced by Shlyakhtenko \cite{shlya97}. In the context of free probability theory, these factors can be regarded as analogs of the hyperfinite factors coming from the CAR\footnote{Canonical Anticommutation Relations} functor. To each real separable Hilbert space $H_\R$ together with an orthogonal representation $(U_t)$ of $\R$ on $H_\R$, one  associates \cite{shlya97} a von Neumann algebra denoted by $\Gamma(H_\R, U_t)''$, called the {\it free Araki-Woods} von Neumann algebra. The von Neumann algebra $\Gamma(H_\R, U_t)''$ comes equipped with a unique {\it free quasi-free state}, which is always normal and faithful (see Section \ref{preliminaries} for a more detailed construction). If $\dim H_\R = 1$, then $\Gamma(\R, \Id)'' \cong L^\infty([0, 1])$. If $\dim H_\R \geq 2$, then $\mathcal{M} = \Gamma(H_\R, U_t)''$ is a full factor. In particular, $\mathcal{M}$ can never be of type ${\rm III_0}$. The type classification of these factors is the following:
\begin{enumerate}
\item $\mathcal{M}$ is a type ${\rm II_1}$ factor if and only if the representation $(U_t)$ is trivial: in that case the functor $\Gamma$ is Voiculescu's free Gaussian functor \cite{voiculescu92}. Then $\Gamma(H_\R, 1)'' \cong L(\F_{\dim H_\R})$ is a free group factor.
\item $\mathcal{M}$ is a type ${\rm III_\lambda}$ factor, for $0 < \lambda < 1$, if and only if the representation $(U_t)$ is $\frac{2\pi}{|\log \lambda|}$-periodic. 
\item $\mathcal{M}$ is a type ${\rm III_1}$ factor if and only if $(U_t)$ is nonperiodic and nontrivial.
\end{enumerate}

Let us start by recalling some fundamental structural results for free group factors. In their breakthrough paper \cite{ozawapopa}, Ozawa and Popa showed that the free group factors $L(\F_n)$ are {\em strongly solid}, i.e. the normalizer $\mathcal{N}_{L(\F_n)}(P)=\{u\in \mathcal{U}(L(\F_n)): uPu^*=P\}$ of any diffuse amenable subalgebra $P\subset L(\F_n)$ generates an amenable von Neumann algebra, thus hyperfinite by Connes' result \cite{connes76}. This strengthened two well-known indecomposability results for free group factors: Voiculescu's celebrated result in \cite{voiculescu96}, showing that $L(\F_n)$ has no Cartan subalgebra, which in fact exhibited the first examples of factors with no Cartan decomposition; and Ozawa's result in \cite{ozawa2003}, showing that the commutant in $L(\F_n)$ of any diffuse subalgebra must be amenable ($L(\F_n)$ are {\it solid}).

For the type ${\rm III}$ free Araki-Woods factors $\mathcal{M} = \Gamma(H_\R, U_t)''$,  Shlyakhtenko obtained several remarkable classification results using free probability techniques:
\begin{itemize}
\item When $(U_t)$ are {\it almost periodic}, the free Araki-Woods factors are completely classified up to state-preserving $\ast$-isomorphism \cite{shlya97}: they only depend on Connes' invariant $\Sd(\mathcal{M})$ which is equal in that case to the (countable) subgroup $S_U \subset \R_+$ generated by the eigenvalues of $(U_t)$. Moreover, the {\it discrete} core $\mathcal{M} \rtimes_\sigma \widehat{S_U}$ (where $\widehat{S_U}$ is the Pontryagin dual of $S_U$) is $\ast$-isomorphic to $L(\F_\infty) \overline{\otimes} \mathbf{B}(\ell^2)$.
\item If $(U_t)$ is the left regular representation, then the {\it continuous} core $M = \mathcal{M} \rtimes_\sigma \R$ is $\ast$-isomorphic to $L(\F_\infty) \overline{\otimes} \mathbf{B}(\ell^2)$ \cite{shlya98} and the dual ``trace-scaling" action $(\theta_s)$ is precisely the one constructed by  R\u{a}dulescu \cite{radulescu1991}.
\end{itemize}
For more on free Araki-Woods factors, we refer to \cite{{houdayer6}, {houdayer5}, {houdayer3}, {houdayer2}, {shlya2004}, {shlya2003}, {shlya2000}, {shlya99}, {shlya98}, {shlya97}} and also to Vaes' Bourbaki seminar \cite{vaes2004}.

Our first result deals with approximation properties for $\Gamma(H_\R, U_t)''$. Recall that a von Neumann algebra $\mathcal{N}$ is said to have the {\it complete metric approximation property} (c.m.a.p.) \cite{haa} if there exists a net of normal finite rank completely bounded maps $\Phi_n : \mathcal{N} \to \mathcal{N}$ such that
\begin{itemize}
\item $\Phi_n(x) \to x$ $\ast$-strongly, for every $x \in \mathcal{N}$;
\item $\|\Phi_n\|_{\cb} \leq 1$, for every $n$.
\end{itemize}

Haagerup first established in \cite{haagerup84} that the free group
factors $L(\F_n)$ have the metric approximation property. His idea
was to use radial multipliers on $\F_n$. In a subsequent
unpublished work with Szwarc (see \cite{HSS}), a complete description
of completely bounded radial multipliers was obtained, showing
that $L(\F_n)$ has the complete metric approximation property.  Along
the same pattern, we start by characterizing appropriate {\it radial
multipliers} on $\Gamma(H_\R, U_t)''$. At the $L^2$-level, that is on
the Fock space, they just act diagonally on tensor powers of $H$.
They allow us to reduce the question of the approximation property to a
finite length situation, which is enough to conclude for almost
periodic representations $(U_t)$. To proceed to the general case, we use completely
positive maps arising from the second quantization functor.  The
novelty here is that it holds true under a milder assumption than the
usual one \cite{{shlya97}, {voiculescu85}}, and we obtain:

\begin{theoremA}
All the free Araki-Woods factors have the complete metric approximation property.
\end{theoremA}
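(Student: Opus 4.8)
The plan is to follow Haagerup's two–step strategy for the free group factors, adapting both ingredients—radial multipliers and second quantization—to the Fock space model of $\G$. Write $H = H_\R \otimes_\R \C$, let $A$ be the positive self-adjoint generator of $(U_t)$, and let $\mathcal{H}$ denote $H$ equipped with the deformed inner product attached to $A$, so that the GNS space of the free quasi-free state is the full Fock space $\mathcal{F}(\mathcal{H}) = \C\Omega \oplus \bigoplus_{n\ge 1}\mathcal{H}^{\otimes n}$. The goal is a net of normal finite rank maps $\Phi$ on $\G$ with $\|\Phi\|_{\cb}\to 1$ and $\Phi \to \id$ $*$-strongly, and I would produce each $\Phi$ as a composition $m_c \circ \mathcal{F}(T)$ of a radial multiplier and a second–quantized contraction, the first controlling the \emph{length} and the second controlling the \emph{rank}.

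First I would analyze radial multipliers. For a bounded sequence $(c_n)_{n\ge 0}$ let $m_c$ be the map whose $L^2$-symbol acts on $\mathcal{H}^{\otimes n}$ as $c_n\,\id$; these are exactly the operators acting diagonally on tensor powers. The key is a Haagerup–Szwarc type estimate: if a Hankel matrix built from the differences of $(c_n)$ is of trace class, then $m_c$ is a normal completely bounded map on $\G$ whose $\cb$ norm is controlled by that trace norm. From this I would extract a net of \emph{finitely supported} sequences $c^{(k)}$ with $c^{(k)}_n\to 1$ for each $n$ and $\|m_{c^{(k)}}\|_{\cb}\to 1$. Since $c^{(k)}$ has finite support, $m_{c^{(k)}}$ annihilates all sufficiently high tensor levels, that is, it performs a length truncation in a completely bounded way—whereas a sharp cutoff has uncontrolled $\cb$ norm, which is precisely why the multiplier analysis is needed.

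Second, I would set up the second quantization functor on $\G$: to a suitable contraction $T$ on $\mathcal{H}$ one associates the normal unital completely positive state–preserving map $\mathcal{F}(T)$ whose $L^2$-symbol is $\bigoplus_n T^{\otimes n}$. When $T$ has \emph{finite rank}, the composite $m_{c^{(k)}} \circ \mathcal{F}(T)$ has a symbol supported on finitely many levels, each of finite rank, hence is a finite rank map; because $\mathcal{F}(T)$ is $\cb$-contractive, its $\cb$ norm is at most $\|m_{c^{(k)}}\|_{\cb}\to 1$, and letting $c^{(k)}\to 1$ together with $T\to \id$ strongly yields $*$-strong convergence to the identity. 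For \emph{almost periodic} $(U_t)$ this finishes the argument at once: $A$ is diagonalizable, so its finite spectral projections furnish finite rank contractions $T$ commuting with $A$, for which $\mathcal{F}(T)$ is ucp by the classical functoriality.

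The main obstacle is the general, non–almost periodic case, where $A$ has continuous spectrum and hence admits \emph{no} nonzero finite rank projection commuting with it; the classical recipe for ucp second quantizations—requiring $T$ to commute with $(U_t)$—cannot supply finite rank maps approaching the identity. The crux is therefore to enlarge the class of $T$ for which $\mathcal{F}(T)$ stays completely positive and state–preserving, replacing the commutation hypothesis $[T,U_t]=0$ by a milder compatibility with the modular data (controlling $A^{1/2}TA^{-1/2}$ on a suitable domain together with a contractivity condition, rather than asking $[T,A]=0$). I would verify this relaxed functoriality by a direct positivity computation on the Fock space, and then, for arbitrary $(U_t)$, construct finite rank contractions satisfying the milder condition and converging strongly to $\id$—for instance by approximating $A$ by operators with discrete spectrum and transporting along the resulting intertwiners. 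Inserting these $T$ into $m_{c^{(k)}}\circ \mathcal{F}(T)$ then completes the construction in full generality.
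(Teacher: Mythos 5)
Your overall architecture coincides with the paper's---Haagerup--Szwarc-type radial multipliers giving completely bounded length truncations with $\cb$-norm tending to $1$, composed with second quantizations of finite rank contractions satisfying a condition weaker than $[T,A]=0$---and your diagnosis of the obstacle in the non--almost-periodic case is exactly right. But the two steps you defer ("I would verify\dots", "for instance by\dots") are where the entire content lies, and each, as sketched, has a genuine gap. First, the "milder condition" is not an unspecified control of $A^{1/2}TA^{-1/2}$: the paper's condition is precisely $ITI=T$, where $I=JA^{-1/2}$ is the conjugate-linear closed involution whose fixed-point set is $K_\R$ (your instinct is not far off, since $ITI=T$ amounts to $JTJ=A^{-1/2}TA^{1/2}$, but the clean formulation matters). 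More importantly, complete positivity is \emph{not} obtained by a direct positivity computation on the Fock space. The paper proves that $\Gamma(T)$ is unital completely positive on the Toeplitz algebra $\Ch$ for an \emph{arbitrary} contraction $T$, by reducing to the cases of an inclusion, a unitary, and an orthogonal projection via the universal property of $\Ch$ (for a projection, $\Gamma(T)$ is literally a compression $x\mapsto\iota^*x\iota$), and then uses the Wick formula to show that $ITI=T$ forces $\Gamma(T)W(e_1\otimes\cdots\otimes e_n)=W(Te_1\otimes\cdots\otimes Te_n)$, i.e., invariance of $\GG$. So the condition buys \emph{invariance} of the subalgebra, while positivity is inherited from the ambient Toeplitz algebra where it holds unconditionally; without pinning down the condition you cannot even define your map $\mathcal{F}(T)$ on $\G$, since invariance is exactly what is at stake.

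The second gap is the construction of finite rank contractions satisfying the condition and converging pointwise to the identity. Your route, "approximating $A$ by operators with discrete spectrum and transporting along the resulting intertwiners," would fail: an operator with continuous spectrum is not unitarily equivalent to one with discrete spectrum, so the intertwiners you would transport along do not exist, and conjugating by any approximate unitary destroys compatibility with $I$, which is built from the original $A$. The paper's actual construction is a quantitative spectral-slice argument: for a finite dimensional subspace $E\subset\mathbf{1}_{[\lambda,\lambda+\delta]}(A)(H)$ with $\lambda>1$, one proves $\|IP_EI-JP_EJ\|_\infty\leq 2\delta/\lambda$, where $JP_EJ$ is the projection onto $JE$ sitting in the reciprocal slice and orthogonal to $E$; hence $T_E=(1+2\delta/\lambda)^{-1}(P_E\oplus IP_EI)$ is a finite rank \emph{contraction} (the normalization is needed: $P_E+IP_EI$ alone is not contractive) satisfying $IT_EI=T_E$. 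One then assembles such $T_E$ over a logarithmic partition $[e^{nk/N},e^{n(k+1)/N}]$ of the spectrum, treats the eigenspace of $A$ at $1$ separately (there $I$ is anti-unitary, so one may use the projection onto $P_0(F)+IP_0(F)$), and lets $N\to\infty$. This almost-orthogonality estimate, which is what reconciles "finite rank" with the unbounded involution $I$, is the crux of the general case and is absent from your proposal.
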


The free Araki-Woods factors $\Gamma(H_\R, U_t)''$ as well as their continuous cores carry a {\em free malleable deformation} $(\alpha_t)$ in the sense of Popa: it naturally arises from the second quantization of the rotations defined on $H_\R \oplus H_\R$ that commute with $U_t \oplus U_t$. Using Ozawa-Popa's techniques \cite{{ozawapopa}, {ozawapopaII}}, we will then apply the {\it deformation/rigidity} strategy together with the {\it intertwining techniques} in order to study $\Gamma(H_\R, U_t)''$. The high flexibility of this approach will allow us to work in a {\it semifinite} setting, so that we can obtain new structural/indecomposability results for the free Araki-Woods factors as well as their continuous cores. Recall in that respect that a von Neumann subalgebra $A \subset \mathcal{M}$ is said to be a {\em Cartan subalgebra} if the following conditions hold:
\begin{itemize}
\item $A$ is maximal abelian, i.e. $A = A' \cap \mathcal{M}$.
\item There exists a faithful normal conditional expectation $E : \mathcal{M} \to A$.
\item The normalizer $\mathcal{N}_{\mathcal M}(A) = \{u \in \mathcal{U}(\mathcal{M}) : u A u^* = A\}$ generates $\mathcal{M}$.
\end{itemize}
It follows from \cite{FM} that in that case, $L^\infty(X, \mu) = A \subset \mathcal{M} = L(\mathcal{R}, \omega)$ is the von Neumann algebra of a nonsingular equivalence relation $\mathcal{R}$ on the standard probability space $(X, \mu)$ up to a scalar $2$-cocycle $\omega$ for $\mathcal{R}$.

Shlyakhtenko showed \cite{shlya2000} that the unique type ${\rm III_\lambda}$ free Araki-Woods factor ($0 < \lambda < 1$) has no Cartan subalgebra. We generalize this result and prove the analog of the strong solidity \cite{ozawapopa} for all the free Araki-Woods factors. Our second result is the following global dichotomy result for conditioned diffuse subalgebras of free Araki-Woods factors.

\begin{theoremB}
Let $\mathcal{M} = \Gamma(H_\R, U_t)''$ be any free Araki-Woods factor. Let $\mathcal{N} \subset \mathcal{M}$ be a diffuse von Neumann subalgebra for which there exists a faithful normal conditional expectation $E : \mathcal{M} \to \mathcal{N}$. Then either $\mathcal{N}$ is hyperfinite or $\mathcal{N}$ has no Cartan subalgebra.
\end{theoremB}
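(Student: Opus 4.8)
The plan is to prove the contrapositive: assuming $\mathcal{N}$ is not hyperfinite and nonetheless admits a Cartan subalgebra $A$, I would derive that $\mathcal{N}$ is amenable, a contradiction. Since a diffuse abelian algebra is hyperfinite, $\mathcal{N}$ is nonabelian, so $\dim H_\R \geq 2$ and $\mathcal{M}$ is a full factor. The first move is to pass to a tracial framework. Let $E_A : \mathcal{N} \to A$ be the conditional expectation coming from the Cartan structure, fix a faithful normal state $\psi$ on $A$, and put $\varphi = \psi \circ E_A \circ E$, a faithful normal state on $\mathcal{M}$. By Takesaki's theorem $\sigma^\varphi$ globally preserves $\mathcal{N}$, and since $\varphi|_A$ lives on an abelian algebra, $\sigma^\varphi$ fixes $A$ pointwise. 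Forming the continuous cores yields an inclusion of semifinite algebras $N = \mathcal{N} \rtimes_{\sigma^\varphi} \R \subset M = \mathcal{M} \rtimes_{\sigma^\varphi} \R$ with trace $\Tr$ and dual trace-scaling flow. As $A$ is fixed by $\sigma^\varphi$, it commutes with the canonical copy of $L\R$ in the core, so $\mathcal{A} := A \overline{\otimes} L\R$ is abelian in $N$; as is standard for continuous cores of Cartan inclusions, $\mathcal{A}$ is in fact a (semifinite) Cartan subalgebra of $N$, and being abelian it is amenable.

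The second move transfers the two structural inputs to the core. By Theorem A the factor $\mathcal{M}$ has the c.m.a.p.; this passes to the crossed product by the amenable group $\R$ and to every corner, so each finite-trace corner $pMp$ of $M$ has the c.m.a.p. The free malleable deformation $(\alpha_t)$ is built equivariantly with respect to the modular flow of the free quasi-free state, so it commutes with that flow and hence extends to an automorphic deformation of the corresponding core; since the continuous core does not depend, up to canonical isomorphism, on the chosen state, this produces a $\Tr$-preserving s-malleable deformation of $M$ (valued in the core of $\Gamma(H_\R \oplus H_\R, U_t \oplus U_t)''$) commuting with the trace-scaling flow, whose free mixing character is retained.

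The heart of the argument is then the Ozawa--Popa deformation/rigidity analysis, run in the semifinite factor $M$ and localized to a finite-trace corner $pMp$ with $p \in \mathcal{A}$: here $pMp$ is a ${\rm II_1}$ factor with the c.m.a.p. and $\mathcal{A}p \subset pNp$ is a Cartan subalgebra, so $pNp = \mathcal{N}_{pNp}(\mathcal{A}p)''$. Because $\mathcal{A}p$ is amenable and $pMp$ has the c.m.a.p., the action of $\mathcal{N}_{pMp}(\mathcal{A}p)$ on $\mathcal{A}p$ is weakly compact. Feeding this into the transversality of $(\alpha_t)$, the mixing of the free deformation excludes any intertwining of $\mathcal{A}p$ into a proper deformed subalgebra; this forces $(\alpha_t)$ to converge uniformly on the unit ball of the normalizer and yields that $\mathcal{N}_{pMp}(\mathcal{A}p)''$ is amenable. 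In particular $pNp \subseteq \mathcal{N}_{pMp}(\mathcal{A}p)''$ is amenable; letting $p$ increase to $1$ through finite-trace projections of $\mathcal{A}$ shows $N$ is amenable, hence $\mathcal{N}$ is hyperfinite, the desired contradiction.

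I expect the real difficulty to lie not in this skeleton but in the semifinite transfer. One must verify carefully that the c.m.a.p. from Theorem A survives the passage to the core and to its finite corners, and that the free malleable deformation genuinely extends to a transversal s-malleable deformation of the core commuting with the trace-scaling flow --- delicate because $\mathcal{M}$ carries no trace of its own and the extension relies on the canonicity of the core. The hardest part will be the non-tracial incarnation of the Ozawa--Popa argument itself: showing, via the mixing of the free deformation, that the intertwining alternative for $\mathcal{A}p$ is vacuous, and that uniform convergence of $(\alpha_t)$ on the Cartan upgrades to uniform convergence on its full normalizer, so that the amenability of $\mathcal{N}_{pMp}(\mathcal{A}p)''$ really follows.
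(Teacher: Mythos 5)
Your skeleton coincides with the paper's proof: adapt the state to the Cartan inclusion $A \subset \mathcal{N} \subset \mathcal{M}$, pass to the continuous cores $N \subset M$, observe that $A \overline{\otimes} L(\R)$ is a Cartan subalgebra of $N$, cut to a $\Tr$-finite corner, and run the Ozawa--Popa weak compactness argument against the free malleable deformation; the transfer of the c.m.a.p.\ and of $(\alpha_t)$ to the core also works exactly as you say. However, the central deformation/rigidity step is stated with its logic inverted, so as written it would fail. In the paper, \emph{uniform} convergence of $(\alpha_t)$ on $\mathcal{U}(P)$ is what yields $P \preceq_M L(\R)$ (Theorem \ref{uniform}); conversely, if $P \npreceq_M L(\R)$, then uniform convergence fails in a quantified way (Corollary \ref{uniformcorollary}), and it is this \emph{failure}, combined with the weak compactness you correctly invoke and with the weak containment of $L^2(\widetilde{M}) \ominus L^2(M)$ in the coarse $M,M$-bimodule (Lemma \ref{weakcontainment}) --- an ingredient absent from your sketch --- that makes $\mathcal{N}_{pMp}(\mathcal{A}p)''$ amenable (Theorem \ref{normalizer1}). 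Your claim that non-intertwining ``forces $(\alpha_t)$ to converge uniformly on the unit ball of the normalizer'' is backwards: since $\mathcal{U}(\mathcal{A}p)$ sits inside the normalizer's unitary group, non-intertwining makes uniform convergence on the normalizer impossible; and uniform convergence, if it held, would produce an intertwining into the abelian algebra $L(\R)$, not amenability of the normalizer.

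The second, more serious gap is the mechanism for showing that the intertwining alternative is vacuous. You attribute it to ``the mixing of the free deformation,'' but the deformation cannot see this: it is trivial on $L(\R)$ and its freeness is relative to $L(\R)$, so it gives no information about which subalgebras of $M$ embed into $L(\R)$. Moreover, the relevant target is $\lambda^\chi(\R)''$, the copy of $L(\R)$ attached to the free quasi-free state $\chi$ and fixed by $(\alpha_t)$, which is \emph{not} the copy $\lambda^\varphi(\R)''$ appearing in your Cartan subalgebra $\mathcal{A}$; this two-copies issue must be confronted, not glossed over. What the paper actually proves (Proposition \ref{diffuse}) is that for any \emph{diffuse} $A \subset \mathcal{M}$ --- diffuseness in the fiber $\mathcal{M}$ is the point --- any $\Tr$-finite projection $p \in A' \cap M$ and any faithful normal state, one has $Ap \npreceq_M \lambda^\chi(\R)''$: take weakly null unitaries $(u_n)$ in $A$ and check, by Fourier expansion in the crossed product, that $\|E_{\lambda^\chi(\R)''}(x^* u_n y)\|_{2,\Tr} \to 0$ for all $x,y \in M$. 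This uses no mixing assumption whatsoever --- which is precisely why Theorem B holds for \emph{every} orthogonal representation $(U_t)$ --- and it gives a fortiori $\mathcal{A}p \npreceq_M \lambda^\chi(\R)''$, which is what feeds Theorem \ref{normalizer1}. Without this (or an equivalent) argument, your proof has a hole exactly at its crucial step; with it supplied and the dichotomy run in the correct direction, your outline becomes the paper's proof.
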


We can deduce from Theorems A and B new classification results for the free Araki-Woods factors. First recall that a factor $\mathcal{N}$ is said to be {\em full} if the subgroup of inner automorphisms $\Inn(\mathcal{N})$ is closed in $\Aut(\mathcal{N})$. Write $\pi : \Aut(\mathcal{N}) \to \Out(\mathcal{N})$ for the quotient map. For a full type ${\rm III_1}$ factor $\mathcal{N}$, Connes' invariant $\tau(\mathcal{N})$ is defined as the weakest topology on $\R$ that makes the map $t \mapsto \pi(\sigma_t^\varphi) \in \Out(\mathcal{N})$ continuous. In \cite{connes74}, Connes constructed type ${\rm III_1}$ factors $\mathcal{N}$ with prescribed $\tau$ invariant. Recall his construction. Let $\mu$ be a finite Borel measure on $\R_+$ such that $\int \lambda \, {\rm d} \mu(\lambda) < \infty$. We will normalize $\mu$ so that $\int (1 + \lambda) \, {\rm d} \mu(\lambda) = 1$. Define the unitary representation $(U_t)$ of $\R$ on the real Hilbert space $L^2(\R_+, \mu)$ by $(U_t \xi)(\lambda) = \lambda^{it} \xi(\lambda)$. We will assume that $(U_t)$ is not periodic. Define on $P = \mathbf{M}_2(\C) \otimes L^\infty(\R_+, \mu)$ the faithful normal state $\varphi$ by
$$\varphi\begin{pmatrix}
f_{11} & f_{12} \\
f_{21} & f_{22}
\end{pmatrix} = \int f_{11}(\lambda) \, {\rm d} \mu(\lambda) + \int \lambda f_{22}(\lambda) \, {\rm d} \mu(\lambda).$$
Let $\F_n$ be acting by Bernoulli shift on 
$$\mathcal{P}_\infty = \overline{\bigotimes_{g \in \F_n}} (P, \varphi).$$
Denote by $\mathcal{N} = \mathcal{P}_\infty \rtimes \F_n$ the corresponding crossed product. By the general theory, $\mathcal{N}$ is a type ${\rm III_1}$ factor. Connes showed that $\mathcal{N}$ is a full factor and $\tau(\mathcal{N})$ is the weakest topology that makes the map $t \mapsto U_t$ $\ast$-strongly continuous. In particular, if $(U_t)$ is the left regular representation, then $\tau(\mathcal{N})$ is the usual topology and $\mathcal{N}$ has no almost periodic state. Observe that $\mathcal{N}$ has a Cartan subalgebra $A$ given by
$$A = \overline{\bigotimes_{g \in \F_n}} \Diag_2(L^\infty(\R_+, \mu)).$$
The following Corollary answers a question of Shlyakhtenko (see \cite[Problem 8.7]{shlya99}) and Vaes (see \cite[Remarque 2.8]{vaes2004}).

\begin{corollaryC}
The type ${\rm III_1}$ factors constructed by Connes are never isomorphic to any free Araki-Woods factor. More generally, they cannot be conditionally embedded into a free Araki-Woods factor.
\end{corollaryC}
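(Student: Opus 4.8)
The plan is to recognize that ``conditionally embedded into a free Araki-Woods factor'' is precisely the hypothesis of Theorem B applied to the Connes factor $\mathcal{N}$, so that the statement follows by contraposition once I verify that $\mathcal{N}$ is simultaneously \emph{diffuse}, \emph{nonhyperfinite}, and \emph{equipped with a Cartan subalgebra}. Indeed, if there were a faithful normal conditional expectation from some $\mathcal{M} = \Gamma(H_\R, U_t)''$ onto a copy of $\mathcal{N}$ sitting inside it, then Theorem B would force $\mathcal{N}$ to be either hyperfinite or Cartan-free, and I will rule out both alternatives. Since an isomorphism $\mathcal{N} \cong \mathcal{M}$ is the special case of a conditional embedding via the identity expectation, the first assertion is subsumed by the second.

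Two of the three required properties are essentially immediate from the construction. As a type ${\rm III_1}$ factor, $\mathcal{N}$ has no minimal projections and is therefore diffuse. Moreover, the subalgebra $A = \overline{\bigotimes_{g \in \F_n}} \Diag_2(L^\infty(\R_+, \mu))$ exhibited in the construction is, as already noted there, a Cartan subalgebra of $\mathcal{N}$: the product state restricts to a faithful normal expectation onto it, and the normalizing flip unitaries of each matrix factor together with the unitaries implementing the Bernoulli action generate $\mathcal{N}$. I will take these two facts as given and concentrate on nonhyperfiniteness.

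The only point requiring an argument is nonhyperfiniteness, and this is where I would place the (modest) core of the proof. For $n \geq 2$ the Bernoulli shift preserves the product state $\varphi_\infty = \bigotimes_{g \in \F_n} \varphi$ on $\mathcal{P}_\infty$, so the formula $E(x u_g) = \varphi_\infty(x)\, u_g$ (for $x \in \mathcal{P}_\infty$ and $g \in \F_n$) defines a faithful normal conditional expectation $E : \mathcal{N} \to L(\F_n)$ onto the copy of the free group factor generated by the implementing unitaries. Since $L(\F_n)$ is nonamenable and amenability passes to the range of a normal conditional expectation, $\mathcal{N}$ cannot be amenable; by Connes' theorem \cite{connes76}, amenability coincides with hyperfiniteness, so $\mathcal{N}$ is nonhyperfinite. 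With diffuseness, the Cartan subalgebra, and nonhyperfiniteness all established for $\mathcal{N}$ (and hence for any conditioned isomorphic copy of it inside $\mathcal{M}$), the dichotomy of Theorem B is contradicted, which proves the Corollary. The genuine difficulty has already been absorbed into Theorems A and B; the only subtlety to watch here is that the expectation $E$ onto $L(\F_n)$ is faithful and normal, so that nonamenability genuinely transfers to $\mathcal{N}$.
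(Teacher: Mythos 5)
Your proposal is correct and follows the paper's (implicit) argument: the paper states Corollary C as a direct consequence of Theorem B, having already observed in the preceding paragraph that Connes' factor $\mathcal{N}$ is a full --- hence nonhyperfinite --- type ${\rm III_1}$ factor (so diffuse) carrying the Cartan subalgebra $A$, and then applies the dichotomy by contraposition exactly as you do. The only point where you deviate is the justification of nonhyperfiniteness: you build the faithful normal expectation $E(x u_g) = \varphi_\infty(x) u_g$ onto the nonamenable subfactor $L(\F_n)$ (legitimate, since the Bernoulli shift preserves the product state), whereas the paper implicitly gets nonamenability from Connes' fullness result; both verifications are valid, yours being the more self-contained.
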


The continuous cores $M = \Gamma(H_\R, U_t)'' \rtimes_\sigma \R$ of the free Araki-Woods factors were shown to be {\em semisolid}\footnote{For every finite projection $p \in M$, the commutant in $pMp$ of any type ${\rm II_1}$ subalgebra must be amenable.} for every orthogonal representation $(U_t)$ and {\em solid} when $(U_t)$ is strongly mixing (see \cite[Theorem 1.1]{houdayer6}). They moreover have the c.m.a.p. by Theorem A. Using a similar strategy as in \cite{houdayer8}, we obtain new structural results for the continuous cores of the free Araki-Woods factors. 

\begin{theoremD}
Let $(U_t)$ be a nontrivial nonperiodic orthogonal representation of $\R$. Denote by $\mathcal{M} = \Gamma(H_\R, U_t)''$ the corresponding type ${\rm III_1}$ free Araki-Woods factor. Denote by $M = \mathcal{M} \rtimes_\sigma \R$ its continuous core, which is a type ${\rm II_\infty}$ factor. Let $p \in M$ be a nonzero finite projection and write $N = pMp$.

\begin{enumerate}
\item For any maximal abelian $\ast$-subalgebra $A \subset N$, $\mathcal{N}_N(A)''$ is amenable. In particular, $N$ has no Cartan subalgebra.

\item Assume that
\begin{enumerate}
\item either $(U_t)$ is strongly mixing;
\item or $U_t = \R \oplus V_t$, where $(V_t)$ is strongly mixing.
\end{enumerate}
Then for any diffuse amenable von Neumann subalgebra $P \subset N$, $\mathcal{N}_N(P)''$ is amenable, i.e.\ $N$ is strongly solid.
\end{enumerate}
\end{theoremD}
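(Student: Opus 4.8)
The plan is to import the Ozawa--Popa weak compactness and deformation/rigidity machinery \cite{ozawapopa, ozawapopaII} into the semifinite setting, following the strategy of \cite{houdayer8}. I first collect the inputs. By Theorem A, together with stability of the c.m.a.p. under crossed products by $\R$ and under compression by a projection, the ${\rm II_1}$ factor $N = pMp$ has the c.m.a.p. The deformation is the free malleable one: the rotations of $H_\R \oplus H_\R$ commute with $U_t \oplus U_t$, so their second quantization gives state-preserving automorphisms $\alpha_s$ of $\tilde{\mathcal M} = \Gamma(H_\R \oplus H_\R, U_t \oplus U_t)''$, with $\mathcal M \subset \tilde{\mathcal M}$ and $\alpha_s \to \id$ pointwise. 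Since $\alpha_s$ commutes with the modular flow, it extends to $\tilde M = \tilde{\mathcal M} \rtimes_\sigma \R \supset M$ and fixes the subalgebra $L(\R)$ pointwise; in fact $L(\R)$ is exactly its fixed-point algebra. The two features I will exploit are Popa's transversality inequality for $(\alpha_s)$ and the bimodule computation that, as an $M$--$M$ bimodule, $\overline{L^2(\tilde M) \ominus L^2(M)}$ is weakly contained in the coarse bimodule \emph{relative to} $L(\R)$, which is where freeness is decisive. I will also use the semisolidity of $M$, and its solidity when $(U_t)$ is strongly mixing, from \cite[Theorem 1.1]{houdayer6}.

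Next I would establish the basic dichotomy. Let $P \subset N$ be diffuse amenable (for part (1), $P = A$ a maximal abelian subalgebra) and put $G = \mathcal N_N(P)$. Because $N$ has the c.m.a.p.\ and $P$ is amenable, the Ozawa--Popa argument shows that the conjugation action of $G$ on $P$ is weakly compact. Combining weak compactness with the transversality of $(\alpha_s)$ and the relative coarseness of the deformation bimodule (a spectral gap / averaging argument) yields the alternative: either $\alpha_s \to \id$ uniformly on the unit ball of $P$, or $\mathcal N_N(P)''$ is amenable relative to $L(\R)$ inside $M$. In the latter case, since $L(\R)$ is abelian, $\mathcal N_N(P)''$ is amenable and we are done. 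In the former case, Popa's intertwining criterion for malleable deformations converts the uniform convergence into the relation $P \prec_M L(\R)$. Thus everything reduces to analysing the intertwining $P \prec_M L(\R)$.

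It remains to treat $P \prec_M L(\R)$, where the relative commutant $\mathcal C = L(\R)' \cap M = \mathcal M^\varphi \overline{\otimes} L(\R)$ enters, $\mathcal M^\varphi$ being the centralizer of the free quasi-free state. For part (1), $A$ is maximal abelian, and this rigidity is what lets one dispense with any mixing assumption: maximality severely constrains how a corner of $A$ can sit inside $L(\R) \subseteq \mathcal C$, and a direct analysis of $\mathcal C$ and of the normalizer of $L(\R)$ in $M$ shows that $A \prec_M L(\R)$ still forces $\mathcal N_N(A)''$ to be amenable, yielding (1) for every nontrivial nonperiodic $(U_t)$. For part (2), the extra hypothesis supplies mixing: when $(U_t)$ is strongly mixing one has $\mathcal M^\varphi = \mathbf{C}$, so $\mathcal C = L(\R)$ is itself a maximal abelian, \emph{mixing} subalgebra of $M$, and mixing upgrades $P \prec_M L(\R)$ to $\mathcal N_N(P)'' \prec_M L(\R)$, whence $\mathcal N_N(P)''$ is amenable and $N$ is strongly solid. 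The case $U_t = \R \oplus V_t$ with $(V_t)$ strongly mixing is handled identically once the free semicircular generator coming from the trivial summand is isolated, the mixing of $(V_t)$ again playing its role relative to $L(\R)$.

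\textbf{Main obstacle.} I expect the crux to be twofold. First, carrying the weak compactness plus transversality argument through the type ${\rm II_\infty}$ core: one must establish the coarseness of the deformation bimodule \emph{relative to} $L(\R)$, keep all estimates on the finite corner $pMp$ while working with the non-finite trace $\Tr$ on $M$, and run Popa's intertwining criterion in this semifinite form. Second, the clean resolution of the branch $P \prec_M L(\R)$: in part (1) this is the delicate maximality argument sketched above, which must be valid without any mixing assumption, while in part (2) it is the extraction of the sharp statement that $L(\R)$ (or $\mathcal C$) is \emph{mixing}, so that intertwining of $P$ propagates to its entire normalizer.
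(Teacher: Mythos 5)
Your skeleton matches the paper's: c.m.a.p.\ of $N$ from Theorem A plus Theorem \ref{properties}, the free malleable deformation $(\alpha_s)$ with Popa's transversality, the Ozawa--Popa weak compactness argument run in the semifinite corner (the paper invokes \cite[Theorem 3.5]{houdayer8} verbatim in Theorem \ref{normalizer1}), and the dichotomy ``$\mathcal{N}_N(P)''$ amenable or $P \preceq_M L(\R)$''. One harmless difference: the paper does not pass through amenability \emph{relative} to $L(\R)$; in Lemma \ref{weakcontainment} it uses amenability of $L(\R)$ at the bimodule level to weakly contain $L^2(p\widetilde{M}p) \ominus L^2(pMp)$ in the genuine coarse bimodule, so the conclusion of the key theorem is absolute amenability directly. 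Your variant (relative coarseness, then relative amenability over the abelian $L(\R)$ implies amenability) is equivalent here. Part (2-a) is also as in the paper: strong mixing of $\R \curvearrowright \mathcal{M}$ upgrades $P \preceq_M L(\R)$ to quasi-normalizer control, $\QN_{qM^nq}(\psi(P))'' \subset qL(\R)^nq$ by \cite[Theorem 3.7]{houdayer6}, which conjugates a corner of $\mathcal{N}_N(P)''$ into an abelian algebra.

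The genuine gap is in the branch $P \preceq_M L(\R)$ of part (1), which you explicitly leave as an ``expected obstacle'', and the route you hint at would fail. You propose ``a direct analysis of $\mathcal{C} = \mathcal{M}^\chi \overline{\otimes} L(\R)$ and of the normalizer of $L(\R)$ in $M$''; but no singularity-type constraint on $\mathcal{N}_M(L(\R))$ exists in general: when $(U_t)$ is almost periodic, $L(\R)p$ is \emph{regular} in $pMp$ (the paper's closing Remark of Section \ref{theoBD}). The paper instead runs a trichotomy on $\mathcal{M}^\chi$. (a) If $(U_t)$ has a two-dimensional trivial or periodic subrepresentation, then $L(\F_2) \subset \mathcal{M}^\chi$, so $q(\mathcal{M}^\chi \overline{\otimes} L(\R))^nq \subset \psi(A)' \cap qM^nq$ is of type ${\rm II}$, while maximal abelianness of $A$ forces the corner $v^*Av = q'(\psi(A)' \cap qM^nq)q'$ to be abelian --- a contradiction requiring no normalizer information at all. (b) If $U_t = \R \oplus V_t$ with $(V_t)$ weakly mixing, one needs Ueda's theorem that $L(\Z) = \mathcal{M}^\chi$ is maximal abelian in $\mathcal{M}$, the semifinite MASA-intertwining Proposition \ref{intertwining-masa} to conjugate $A$ onto a corner of $B = L(\Z) \overline{\otimes} L(\R)$ by a partial isometry carrying normalizers, Proposition \ref{diffuse} to see $Bv^*v \npreceq_M L(\R)$, and then a \emph{second} application of Theorem \ref{normalizer1}, now to $B$. (c) Only when $(U_t)$ is weakly mixing is $L(\R)$ a singular MASA, and only there does your sketch apply. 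None of ingredients (a)--(b) appear in your proposal. Relatedly, in (2-b) ``handled identically relative to $L(\R)$'' is off: the modular flow is not mixing when $U_t$ has a trivial summand, so the paper must rewrite $M = \mathcal{Q}_\infty \rtimes (\Z \times \R)$ as a free Bernoulli-type crossed product and enlarge the intertwining target to $L(\Z) \overline{\otimes} L(\R)$, where the $\Z \times \R$-action is strongly mixing. (A minor false parenthetical: $L(\R)$ is not the full fixed-point algebra of $(\alpha_s)$ in $\widetilde{M}$ --- e.g.\ $W\bigl((\xi \oplus 0) \otimes (\xi \oplus 0) + (0 \oplus \xi) \otimes (0 \oplus \xi)\bigr)$ is $\alpha$-invariant --- but nothing you do uses this.)
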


The proof of Theorems B and D is a combination of ideas and techniques of \cite{{houdayer4}, {houdayer8}, {houdayer6}, {ozawapopa}, {ozawapopaII}} and rely on Theorem A. Note that Theorem D allows us to obtain other new classification results. Indeed let $\SL_n(\Z) \curvearrowright \R^n$ be the linear action. Observe that it is an infinite measure-preserving free ergodic action. Thus the corresponding crossed product von Neumann algebra $Q_n = L^\infty(\R^n) \rtimes \SL_n(\Z)$ is a ${\rm II_\infty}$ factor, which is nonamenable for $n \geq 3$. Since the dilation $d_t : \R^n \ni x \mapsto tx \in \R^n$ (for $t > 0$) commutes with $\SL_n(\Z)$, it gives a trace-scaling action $(\theta_t) : \R_+ \curvearrowright Q_n$. Theorem D implies in particular that the type ${\rm III_1}$ factors $Q_n \rtimes_{(\theta_t)} \R_+$ obtained this way cannot be isomorphic to any free Araki-Woods factor.

Using \cite[Theorem 2.5, ${\rm v}$]{antoniou} (see also the discussion in \cite[4.2]{houdayer8}), we can construct an example of an orthogonal representation $(U_t)$ of $\R$ on a (separable) real Hilbert space $H_\R$ such that:
\begin{enumerate}
\item $(U_t)$ is strongly mixing.
\item The spectral measure of $\bigoplus_{n \geq 1} U_t^{\otimes n}$ is singular with respect to the Lebesgue measure on $\R$.
\end{enumerate}
Shlyakhtenko showed \cite[Theorem 9.12]{shlya99} that if the spectral measure of the representation $\bigoplus_{n \geq 1} U_t^{\otimes n}$ is singular with respect to the Lebesgue measure, then the continuous core of the free Araki-Woods factor $\Gamma(H_\R, U_t)''$ cannot be isomorphic to any $L(\F_t) \overline{\otimes} \mathbf{B}(\ell^2)$, for $1 < t \leq \infty$, where $L(\F_t)$ denote the interpolated free group factors \cite{{dykema94}, {radulescu1994}}. Therefore, we obtain:

\begin{corollaryE}
Let $(U_t)$ be an orthogonal representation acting on $H_\R$ as above. Denote by $\mathcal{M} = \Gamma(H_\R, U_t)''$ the corresponding free Araki-Woods factor and by $M = \mathcal{M}Ê\rtimes_\sigma \R$ its continuous core. Let $p \in M$ be a nonzero finite projection and write $N = pMp$. We have
\begin{itemize}
\item $N$ is a nonamenable strongly solid ${\rm II_1}$ factor with the c.m.a.p.\ and the Haagerup property.
\item $N$ is not isomorphic to any interpolated free group factor $L(\F_t)$, for $1 < t \leq \infty$;
\item $N \overline{\otimes} \mathbf{B}(\ell^2)$ is endowed with a continuous trace-scaling action, in particular $\mathcal{F}(N) = \R_+$.
\end{itemize}
\end{corollaryE}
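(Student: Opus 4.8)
The plan is to read off all three assertions by assembling Theorems~A and~D with Shlyakhtenko's classification results \cite{shlya99}, the representation $(U_t)$ having been arranged (via the construction cited from \cite{antoniou}) to be simultaneously strongly mixing and to have $\bigoplus_{n \geq 1} U_t^{\otimes n}$ with spectral measure singular with respect to Lebesgue measure. First I would record the basic type and amenability data. Since $(U_t)$ is strongly mixing it is nontrivial and nonperiodic, so $\mathcal{M} = \Gamma(H_\R, U_t)''$ is a full type ${\rm III_1}$ factor and $M = \mathcal{M} \rtimes_\sigma \R$ is a type ${\rm II_\infty}$ factor; hence for a nonzero finite projection $p$ the corner $N = pMp$ is a ${\rm II_1}$ factor. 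Nonamenability of $N$ follows from nonamenability of $\mathcal{M}$ together with the standard fact (via Takesaki duality) that injectivity is preserved in both directions under the crossed product by $\R$, and that it passes to and from corners.

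The three structural properties in the first bullet then come from the results already in hand. Strong solidity of $N$ is exactly Theorem~D(2), in case~(a), since $(U_t)$ is strongly mixing; in particular $N$ has no Cartan subalgebra by Theorem~D(1). The c.m.a.p.\ of $N$ follows from Theorem~A: $\mathcal{M}$ has the c.m.a.p., this passes to the continuous core $M = \mathcal{M} \rtimes_\sigma \R$ because $\R$ is amenable, and the c.m.a.p.\ is inherited by the corner $N = pMp$. For the Haagerup property I would use the free malleable deformation $(\alpha_t)$: it arises from second quantization of rotations on $H_\R \oplus H_\R$ commuting with $U_t \oplus U_t$, hence descends to state-preserving unital completely positive maps on $M$; together with the mixing of $(U_t)$, the associated maps on $N$ are trace-preserving, unital, completely positive and compact on $L^2(N)$, which is precisely the Haagerup property.

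For the second bullet I would argue by contradiction through amplification. As $M$ is a type ${\rm II_\infty}$ factor and $p$ a finite projection, one has the standard isomorphism $M \cong N \overline{\otimes} \mathbf{B}(\ell^2)$. If $N$ were isomorphic to some interpolated free group factor $L(\F_t)$ with $1 < t \leq \infty$, then $M$ would be isomorphic to $L(\F_t) \overline{\otimes} \mathbf{B}(\ell^2)$. But the spectral measure of $\bigoplus_{n \geq 1} U_t^{\otimes n}$ is singular with respect to Lebesgue measure, so Shlyakhtenko's Theorem~9.12 \cite{shlya99} forbids $M$ from being isomorphic to $L(\F_t) \overline{\otimes} \mathbf{B}(\ell^2)$ for any such $t$; this contradiction proves $N \not\cong L(\F_t)$.

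For the third bullet I would invoke the continuous decomposition: on $M = \mathcal{M} \rtimes_\sigma \R$ the dual of the modular flow provides the canonical trace-scaling action $(\theta_s)$ of $\R_+$, which is continuous, and because $\mathcal{M}$ is of type ${\rm III_1}$ its scaling factor sweeps out all of $\R_+$. Transporting through $M \cong N \overline{\otimes} \mathbf{B}(\ell^2)$ equips $N \overline{\otimes} \mathbf{B}(\ell^2)$ with a continuous trace-scaling $\R_+$-action; the presence of a trace-scaling automorphism for every $s > 0$ forces $\mathcal{F}(N) = \R_+$. The main obstacle in all of this is entirely upstream, in Theorem~D; within the corollary itself the only steps demanding care are the simultaneous realizability of the two spectral hypotheses on $(U_t)$ (already guaranteed by the cited construction) and the clean passage of the Haagerup property from the deformation on $\mathcal{M}$ to the tracial corner $N$.
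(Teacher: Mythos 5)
Most of your assembly coincides with the paper's intended deduction and is correct: strong solidity of $N$ is indeed Theorem D(2), case (a), which together with nonamenability rules out Cartan subalgebras; the c.m.a.p.\ of $N$ follows from Theorem A combined with items $(1)$, $(2)$ and $(4)$ of Theorem \ref{properties}; the second bullet is exactly the paper's amplification argument, since $N \cong L(\F_t)$ would give $M \cong N \overline{\otimes} \mathbf{B}(\ell^2) \cong L(\F_t) \overline{\otimes} \mathbf{B}(\ell^2)$, contradicting \cite[Theorem 9.12]{shlya99} under the singularity assumption on the spectral measure of $\bigoplus_{n \geq 1} U_t^{\otimes n}$; and the third bullet is the standard transport of the dual action $(\theta_s)$, which satisfies $\Tr \circ \theta_s = e^{-s}\Tr$ on the type ${\rm II_\infty}$ factor $M$, through $M \cong N \overline{\otimes} \mathbf{B}(\ell^2)$, yielding $\mathcal{F}(N) = \R_+$.

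The Haagerup property step, however, contains a genuine error. The maps you propose, namely the compressions to $N = pMp$ of $E_M \circ \alpha_t$, are never compact on $L^2(N)$, for two independent reasons. First, $\alpha_t$ restricts to the identity on $L(\R)$ and so does $E_M$, so your maps restrict to the identity on the diffuse abelian subalgebra $L(\R)p$, whence their $L^2$-extensions have an infinite-dimensional eigenspace for the eigenvalue $1$. Second, already on $\mathcal{M}$ one computes $E_{\mathcal{M}} \circ \alpha_t = \Gamma(\cos(\frac{\pi}{2}t))$, i.e.\ the radial multiplier $\m_\psi$ with $\psi(n) = \cos^n(\frac{\pi}{2}t)$, which acts as the scalar $\cos^n(\frac{\pi}{2}t)$ on the infinite-dimensional subspace of words of length $n$: again non-compact. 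Strong mixing of $(U_t)$ plays no role here; it yields mixing of the inclusion $L(\R) \subset M$, not compactness of the deformation. The paper's mechanism is entirely different: compactness comes from the second quantizations $\Gamma(e^{-t}T_k)$ with $T_k$ finite rank and $IT_kI = T_k$, constructed at the end of Section \ref{theoA}, which are norm limits of finite-rank maps. What remains even after substituting these maps --- and what your write-up would still need to address --- is the passage of the Haagerup property from $\mathcal{M}$ to the core $M$ and its corner $pMp$: this cannot be extracted from $(\alpha_t)$, and note that the finite-rank $T_k$ of the paper are not $(U_t)$-equivariant, so $\Gamma(T_k)$ does not extend to $M$ by the naive formula on the crossed product. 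One needs either modular-equivariant approximants (for instance, the radial multipliers $\m_{\varphi}$ do commute with $\sigma^\chi$, since $\mathcal{F}(U_s)$ preserves word length, and hence do extend to $M$; they must then be combined with suitable compact-type multipliers in the $L(\R)$-direction) or a general stability result for the Haagerup property under the continuous core construction and compression.
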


We recall in Section $\ref{preliminaries}$ a number of known results needed in
the proofs, for the reader's convenience. This includes a discussion of intertwining techniques for semifinite von Neumann algebras as well as several facts on the noncommutative flow of weights, Cartan subalgebras and the complete metric approximation property. Theorem A is proven in Section $\ref{theoA}$, Theorems B and D in Section $\ref{theoBD}$.

\section{Preliminaries}\label{preliminaries}

\subsection{Intertwining techniques}

We first recall some notation. Let $P \subset \mathcal{M}$ be an inclusion of von Neumann algebras. The {\it normalizer of} $P$ {\it inside} $\mathcal{M}$ is defined as
\begin{equation*}
\mathcal{N}_\mathcal{M}(P) := \left\{ u \in \mathcal{U}(\mathcal{M}) : \Ad(u) P = P \right\},
\end{equation*}
where $\Ad(u) = u \cdot u^*$. The inclusion $P \subset \mathcal{M}$ is said to be {\it regular} if $\mathcal{N}_\mathcal{M}(P)'' = \mathcal{M}$. The {\it groupoid normalizer of} $P$ {\it inside} $\mathcal{M}$ is defined as
\begin{equation*}
\mathcal{G}\mathcal{N}_\mathcal{M}(P) := \left\{ v \in \mathcal{M} \mbox{ partial isometry } : v P v^* \subset P, v^* P v \subset P \right\}.
\end{equation*}
The {\it quasi-normalizer of} $P$ {\it inside} $\mathcal{M}$ is defined as
\begin{equation*}
\QN_\mathcal{M}(P) := \left\{ a \in \mathcal{M} : \exists b_1, \dots, b_n \in \mathcal{M}, aP \subset \sum_i Pb_i, Pa \subset \sum_i b_iP \right\}.
\end{equation*}
The inclusion $P \subset \mathcal{M}$ is said to be {\it quasi-regular} if $\QN_\mathcal{M}(P)'' = \mathcal{M}$. Moreover,
\begin{equation*}
P' \cap \mathcal{M} \subset \mathcal{N}_\mathcal{M}(P)'' \subset \mathcal{G}\mathcal{N}_\mathcal{M}(P)'' \subset \QN_\mathcal{M}(P)''.
\end{equation*}

In \cite[Theorem 2.1]{popamal1}, \cite[Theorem A.1]{popa2001}, Popa introduced a powerful tool to prove the unitary conjugacy of two von Neumann subalgebras of a tracial von Neumann algebra $(M, \tau)$. We will make intensively use of this technique. If $A, B \subset (M, \tau)$ are (possibly non-unital) von Neumann subalgebras, denote by $1_A$ (resp. $1_B$) the unit of $A$ (resp. $B$).

\begin{theo}[Popa, \cite{{popamal1}, {popa2001}}]\label{intertwining1}
Let $(M, \tau)$ be a finite von Neumann algebra. Let $A, B \subset M$ be possibly nonunital von Neumann subalgebras. The following are equivalent:
\begin{enumerate}
\item There exist $n \geq 1$, a possibly nonunital $\ast$-homomorphism $\psi : A \to \mathbf{M}_n(\C) \otimes B$ and a nonzero partial isometry $v \in \mathbf{M}_{1, n}(\C) \otimes 1_AM1_B$ such that $x v = v \psi(x)$, for any $x \in A$.

\item There is no sequence of unitaries $(u_k)$ in $A$ such that 
\begin{equation*}
\lim_{k \to \infty} \|E_B(a^* u_k b)\|_2 = 0, \forall a, b \in 1_A M 1_B.
\end{equation*}
\end{enumerate}
\end{theo}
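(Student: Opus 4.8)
The plan is to establish the two implications separately: $(1)\Rightarrow(2)$ by a direct computation, and $(2)\Rightarrow(1)$ by the Jones basic construction and a minimal-norm argument.

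For $(1)\Rightarrow(2)$, I would first record the elementary consequences of the intertwining relation $xv=v\psi(x)$. Taking adjoints and replacing $x$ by $x^*$ gives $v^*x=\psi(x)v^*$ for all $x\in A$; in particular $v=v\psi(1_A)$, so $v^*=\psi(1_A)v^*$ and $v^*v=\psi(1_A)\,v^*v$. Suppose, for contradiction, that some sequence of unitaries $(u_k)$ in $A$ satisfied $\|E_B(a^*u_kb)\|_2\to0$ for all $a,b\in 1_AM1_B$. Writing $v=(v_1,\dots,v_n)$ with $v_j\in 1_AM1_B$ and applying $\id_{\mathbf{M}_n(\C)}\otimes E_B$ to the identity $v^*u_kv=\psi(u_k)\,v^*v$, I would use that $\psi(u_k)\in\mathbf{M}_n(\C)\otimes B$ to pull it out of the expectation, obtaining
\begin{equation*}
(\id\otimes E_B)(v^*u_kv)=\psi(u_k)\,w,\qquad w:=(\id\otimes E_B)(v^*v).
\end{equation*}
Since $\Tr\otimes\tau$ is a trace on $\mathbf{M}_n(\C)\otimes M$, since $\psi(u_k)$ is a unitary in the corner $\psi(1_A)(\mathbf{M}_n(\C)\otimes M)\psi(1_A)$, and since $w=\psi(1_A)w$, left multiplication by $\psi(u_k)$ is $\|\cdot\|_2$-preserving on $w$, so $\|\psi(u_k)w\|_2=\|w\|_2$ for every $k$. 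As $(\Tr\otimes\tau)(w)=(\Tr\otimes\tau)(v^*v)=\|v\|_2^2>0$ and $w\geq0$, we have $w\neq0$, so these norms are bounded away from $0$. This contradicts $\|(\id\otimes E_B)(v^*u_kv)\|_2\to0$, and forces $(2)$.

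For $(2)\Rightarrow(1)$ I would pass to the Jones basic construction $\langle M,e_B\rangle$ on $L^2(M,\tau)$, equipped with its canonical semifinite trace $\Tr$ normalized by $\Tr(xe_By)=\tau(xy)$ for $x,y\in M$. The first step is the standard reformulation of $(2)$: using separability of $1_AM1_B$ in $\|\cdot\|_2$ together with a diagonal extraction, the non-existence of a vanishing sequence is equivalent to $0\notin\overline{\co}^{\,\weak}\{\,u\,e_B\,u^*:u\in\mathcal{U}(A)\,\}$, the closure of the convex hull in $\langle M,e_B\rangle$; the identity $\|\sum_j\lambda_j u_j e_B u_j^*\|_{2,\Tr}^2=\sum_{j,l}\lambda_j\lambda_l\|E_B(u_j^*u_l)\|_2^2$ is what links convex combinations of the $ue_Bu^*$ to the expectations appearing in $(2)$. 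Granting this, let $h$ be the unique element of minimal $\|\cdot\|_{2,\Tr}$-norm in the $\|\cdot\|_{2,\Tr}$-closed convex hull of $\{ue_Bu^*:u\in\mathcal{U}(A)\}$, a subset of the corner $1_A\langle M,e_B\rangle1_A$. Conjugation by $\mathcal{U}(A)$ preserves this set and is $\|\cdot\|_{2,\Tr}$-isometric on the corner, so uniqueness of $h$ gives $uhu^*=h$ for all $u\in\mathcal{U}(A)$, i.e.\ $h\in A'\cap\langle M,e_B\rangle$; moreover $h\geq0$ and $\Tr(h)\leq\Tr(e_B)<\infty$ by weak lower semicontinuity of $\Tr$. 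Since $(2)$ says $0$ is not in the (weak = $\|\cdot\|_{2,\Tr}$, on this bounded set) closure, we get $h\neq0$.

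Having produced a nonzero positive $A$-central element $h\in A'\cap\langle M,e_B\rangle$ of finite trace, I would finish by extracting the intertwiner. Choosing $c>0$ with $q:=\mathbf{1}_{[c,\infty)}(h)\neq0$ yields a nonzero projection $q\in A'\cap\langle M,e_B\rangle$ with $\Tr(q)<\infty$. The structure theory of finite-trace projections in the basic construction identifies the right $B$-module $q\,L^2(M)$ with a finite sum of copies of $L^2(B)$; a realization of this identification produces an isometry into $\mathbf{M}_{1,n}(\C)\otimes L^2(B)$ intertwining the left $A$-action with a normal $\ast$-homomorphism $\psi:A\to\mathbf{M}_n(\C)\otimes B$, whose bounded part is the required nonzero partial isometry $v\in\mathbf{M}_{1,n}(\C)\otimes 1_AM1_B$ with $xv=v\psi(x)$. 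I expect the main obstacle to be precisely this last passage---converting the abstract finite-trace $A$-central projection into the concrete pair $(\psi,v)$---together with the hard direction of the reformulation (manufacturing the sequence $(u_k)$ from approximations of $0$ by convex combinations, where the two-sided quantities $E_B(a^*u_kb)$ genuinely enter), all while correctly tracking the nonunital cutdowns $1_A$ and $1_B$ throughout; by comparison the minimal-element argument and the implication $(1)\Rightarrow(2)$ are routine.
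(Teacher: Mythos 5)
First, note that the paper does not prove this theorem at all: it is quoted from Popa \cite{popamal1, popa2001}, so the only meaningful comparison is with Popa's standard argument, which your outline is clearly modelled on. Your $(1)\Rightarrow(2)$ is correct and essentially identical to the standard computation. The basic-construction strategy for $(2)\Rightarrow(1)$ is also the right one in outline, but it contains a genuine gap: the pivot claim that $(2)$ is equivalent to $0\notin\overline{\co}^{\,\weak}\{ue_Bu^* : u\in\mathcal{U}(A)\}$ is \emph{false}. The point is that the normal functionals detecting membership of $0$ in that closure are of the form $x\mapsto\Tr(x\,ae_Bb)$, and these see only \emph{one-sided} coefficients: one computes $\Tr\bigl(ue_Bu^*\,b^*e_Ba^*\bigr)=\tau\bigl(E_B(u^*b^*)E_B(a^*u)\bigr)$, and your linking identity likewise only controls $\|E_B(u_j^*u_l)\|_2$ for the chosen unitaries, never the two-sided quantities $E_B(a^*ub)$ that appear in $(2)$. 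A concrete counterexample: take $M=B\ast L(\Z)$ with $B$ diffuse, $w$ the Haar generator of $L(\Z)$, and $A=wBw^*$. Then $A\preceq_M B$ (take $v=w$ and $\psi=\Ad(w^*)$), so $(2)$ holds --- indeed $E_B(w^*uw)$ is a unitary of $B$ for every $u\in\mathcal{U}(A)$, so no sequence as in the negation of $(2)$ can exist. Yet for $u_k=wb_kw^*$ with $b_k\in\mathcal{U}(B)$, $b_k\to0$ weakly, freeness gives $\|E_B(u_k^*a)\|_2\to0$ for every fixed $a\in M$, whence $u_ke_Bu_k^*\to0$ weakly in $L^2(\langle M,e_B\rangle,\Tr)$, and by Mazur's theorem $0$ lies in the $\|\cdot\|_{2,\Tr}$-closed convex hull of $\{ue_Bu^*\}$. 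So in this situation your minimal-norm element $h$ is $0$ even though $(2)$ holds, and your argument produces nothing.

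The repair is exactly what Popa does and is not a cosmetic change: one must average not $e_B$ but the elements $c_F=\sum_{a\in F}ae_Ba^*$ for \emph{arbitrary} finite subsets $F\subset 1_AM1_B$, because
\begin{equation*}
\Tr\bigl(u\,c_F\,u^*\,c_F\bigr)=\sum_{a,b\in F}\bigl\|E_B(a^*u^*b)\bigr\|_2^2
\end{equation*}
is precisely a two-sided quantity. If $(2)$ holds, there are $\varepsilon>0$ and a finite $F$ with $\sum_{a,b\in F}\|E_B(a^*ub)\|_2^2\geq\varepsilon^2$ for all $u\in\mathcal{U}(A)$; since $\Tr(c_F)<\infty$, the functional $\Tr(\,\cdot\,c_F)$ is normal, so the minimal-norm element $h_F$ of $\overline{\co}^{\,\weak}\{uc_Fu^*\}$ satisfies $\Tr(h_Fc_F)\geq\varepsilon^2$, hence $h_F\neq0$; it is positive, $A$-central and of finite trace, and your spectral-projection step then applies verbatim. (Conversely, it is also via the $c_F$ that one manufactures the sequence in $\lnot(2)$ from $\lnot(1)$, by a diagonal argument over finite $F$ and $\varepsilon\to0$ --- again something the identity for $\|\sum_j\lambda_ju_je_Bu_j^*\|_{2,\Tr}$ alone cannot deliver.) Finally, be aware that the last passage you deferred --- from a finite-trace projection $q\in A'\cap\langle M,e_B\rangle$ to the pair $(\psi,v)$ with $v\in\mathbf{M}_{1,n}(\C)\otimes 1_AM1_B$ --- is a substantive piece of the proof (identification of $qL^2(\langle M,e_B\rangle,\Tr)e_B$ as a finitely generated right $B$-module, plus the polar-decomposition and cutdown bookkeeping), not a formality; as written your proposal proves neither direction of the hard implication.
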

If one of the previous equivalent conditions is satisfied, we shall say that $A$ {\it embeds into} $B$ {\it inside} $M$ and denote $A \preceq_M B$. For simplicity, we shall write $M^n := \mathbf{M}_n(\C) \otimes M$.

In this paper, we will need to extend Popa's intertwining techniques to {\em semifinite} von Neumann algebras. Let $(M, \Tr)$ be a von Neumann algebra endowed with a semifinite faithful normal trace. We shall simply denote by $L^2(M)$ the $M, M$-bimodule $L^2(M, \Tr)$, and by $\|\cdot\|_{2, \Tr}$ the $L^2$-norm associated with $\Tr$. We will use the following well-known inequality ($\|\cdot\|_\infty$ is the operator norm):
\begin{equation*}
\|x \xi y\|_{2, \Tr} \leq \|\xi\|_{2, \Tr} \|x\|_\infty \|y\|_\infty, \forall \xi \in L^2(M), \forall x, y \in M. 
\end{equation*}
We shall say that a projection $p \in M$ is $\Tr$-{\it finite} if $\Tr(p) < \infty$. Then $p$ is necessarily finite. Moreover, $pMp$ is a finite von Neumann algebra and $\tau:= \Tr(p \cdot p)/\Tr(p)$ is a faithful normal tracial state on $pMp$. Recall that for any projections $p, q \in M$, we have $p \vee q - p \sim q - p \wedge q$. Then it follows that for any $\Tr$-finite projections $p, q \in M$, $p \vee q$ is still $\Tr$-finite and $\Tr(p \vee q) = \Tr(p) + \Tr(q) - \Tr(p \wedge q)$. 

Note that if a sequence $(x_k)$ in $M$ converges to $0$ $\ast$-strongly, then for any nonzero $\Tr$-finite projection $q \in M$, $\left\| x_k q \right\|_{2, \Tr} + \| q x_k \|_{2, \Tr} \to 0$. Indeed,
\begin{eqnarray*}
x_k \to 0 \ast-\mbox{strongly in } M & \Longleftrightarrow & x^*_k x_k + x_k x_k^* \to 0 \mbox{ weakly in } M \\
& \Longrightarrow & qx^*_kx_kq  + q x_k x_k^* q \to 0 \mbox{ weakly in } qMq \\
& \Longrightarrow & \Tr(q x^*_k x_k q) + \Tr(q x_k x_k^* q) \to 0 \\
& \Longleftrightarrow & \Tr((x_k q)^* (x_k q)) + \Tr((q x_k)^* q x_k) \to 0 \\
& \Longleftrightarrow & \| x_k q \|_{2, \Tr} + \|q x_k\|_{2, \Tr} \to 0.
\end{eqnarray*}
Moreover, there always exists an increasing sequence of $\Tr$-finite projections $(p_k)$ in $M$ such that $p_k \to 1$ strongly.

Intertwining techniques for semifinite von Neumann algebras were developed in \cite{houdayer4}. The following result due to S. Vaes is a slight improvement of \cite[Theorem 2.2]{houdayer4} that will be useful in the sequel. We thank S. Vaes for allowing us to present his proof.

\begin{lem}[Vaes, \cite{personal}]\label{intertwining}
Let $(M, \Tr)$ be a semifinite von Neumann algebra. Let $B \subset M$ be a von Neumann subalgebra such that $\Tr_{|B}$ is still semifinite. Let $p \in M$ be a nonzero projection such that $\Tr(p) < \infty$ and $A \subset pMp$ a von Neumann subalgebra. Then the following are equivalent:
\begin{enumerate}
\item For every nonzero projection $q \in B$ with $\Tr(q) < \infty$, we have
\begin{equation*}
A \npreceq_{eMe} qBq, \mbox{ where } e = p \vee q,
\end{equation*}
in the usual sense for finite von Neumann algebras.

\item There exists a sequence of unitaries $(u_n)$ in $A$ such that
\begin{equation*}
\lim_n \|E_B(x^* u_n y)\|_{2, \Tr} = 0, \forall x, y \in M.
\end{equation*}
\end{enumerate}
If these conditions hold, we write $A \npreceq_M B$ and otherwise we write $A \preceq_M B$.
\end{lem}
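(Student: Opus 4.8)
The plan is to prove the two implications $(2)\Rightarrow(1)$ and $(1)\Rightarrow(2)$ separately, after recording one elementary but crucial compatibility between the trace-preserving conditional expectations. Since $\Tr_{|B}$ is semifinite, there is a $\Tr$-preserving conditional expectation $E_B:M\to B$, and for any $\Tr$-finite projection $q\in B$, setting $e=p\vee q$ (which is again $\Tr$-finite, so $eMe$ is a finite von Neumann algebra carrying the normalized trace $\tau_e=\Tr(e\cdot e)/\Tr(e)$), I claim that the $\tau_e$-preserving conditional expectation of $eMe$ onto $qBq$ is given by $E^{eMe}_{qBq}(z)=q\,E_B(z)\,q$ for $z\in eMe$. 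First I would check this directly: the right-hand side lands in $qBq$, is $qBq$-bimodular, sends $e$ to $q$ (because $qeq=q$ together with $q\in B$ forces $qE_B(e)q=E_B(qeq)=E_B(q)=q$), and satisfies $\tau_e\bigl(qE_B(z)q\,\beta\bigr)=\Tr(E_B(z\beta))/\Tr(e)=\tau_e(z\beta)$ for all $\beta\in qBq$, using right $B$-modularity and trace-invariance of $E_B$. This identity is what lets one pass freely between the semifinite expectation $E_B$ and the finite corners.

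For $(2)\Rightarrow(1)$, fix a nonzero $\Tr$-finite projection $q\in B$ and set $e=p\vee q$. I would simply feed the \emph{same} sequence $(u_n)$ into Popa's finite criterion (Theorem \ref{intertwining1}) for the inclusions $A,\,qBq\subset eMe$. Since $1_A=p$, $1_{qBq}=q$ and $p,q\le e$, the relevant test elements range over $p(eMe)q=pMq\subset M$, and for $a,b\in pMq$ the element $a^*u_nb$ lies in $qMq\subset eMe$; by the identity above, $\|E^{eMe}_{qBq}(a^*u_nb)\|_{2,\tau_e}=\Tr(e)^{-1/2}\|qE_B(a^*u_nb)q\|_{2,\Tr}\le \Tr(e)^{-1/2}\|E_B(a^*u_nb)\|_{2,\Tr}\to 0$ by hypothesis $(2)$. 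Hence $A\npreceq_{eMe}qBq$, which is exactly $(1)$.

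The substance is in $(1)\Rightarrow(2)$. Here I would first reduce the class of test elements: because every $u\in\mathcal{U}(A)$ satisfies $u=pup$, one has $x^*uy=(px)^*u(py)$, so it suffices to produce a single sequence $(u_m)$ with $\|E_B(a^*u_mb)\|_{2,\Tr}\to 0$ for all $a,b$ in a countable $\|\cdot\|_{2,\Tr}$-dense subset $D$ of the unit ball of $pM$; such $a,b$ automatically lie in $L^2(M)$ since $\Tr(p)<\infty$. Choose an increasing sequence of $\Tr$-finite projections $q_n\in B$ with $q_n\to 1$ strongly (possible as $\Tr_{|B}$ is semifinite) and put $e_n=p\vee q_n$. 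The key truncation estimate is that for $a,b\in pM$ and \emph{any} $u\in\mathcal{U}(A)$,
\begin{equation*}
\bigl\|E_B(a^*ub)-q_nE_B(a^*ub)q_n\bigr\|_{2,\Tr}\le \|a\|_\infty\,\|b(1-q_n)\|_{2,\Tr}+\|b\|_\infty\,\|a(1-q_n)\|_{2,\Tr},
\end{equation*}
obtained by writing $E_B(a^*ub)(1-q_n)=E_B(a^*ub(1-q_n))$ and treating the left factor through the adjoint; crucially this bound is \emph{uniform in} $u$ and tends to $0$ as $n\to\infty$ because $a,b\in L^2(M)$. Moreover $q_nE_B(a^*ub)q_n=E_B\bigl((aq_n)^*u(bq_n)\bigr)=E^{e_nMe_n}_{q_nBq_n}\bigl((aq_n)^*u(bq_n)\bigr)$ with $aq_n,bq_n\in pMq_n$.

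Finally I would run a diagonal argument. Enumerate $D=\{a_1,a_2,\dots\}$. Given $m$, choose $n=n(m)$ so large that the truncation error above is $<\tfrac1{2m}$ for all pairs $a_i,a_j$ with $i,j\le m$ (uniformly in $u$, by the estimate). Condition $(1)$ applied to $q_n$ gives $A\npreceq_{e_nMe_n}q_nBq_n$, so Theorem \ref{intertwining1} furnishes a unitary $w\in\mathcal{U}(A)$ with $\|E^{e_nMe_n}_{q_nBq_n}((a_iq_n)^*w(a_jq_n))\|_{2,\Tr}<\tfrac1{2m}$ for the finitely many pairs $i,j\le m$; set $u_m=w$. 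Then $\|E_B(a_i^*u_ma_j)\|_{2,\Tr}<\tfrac1m$ whenever $i,j\le m$, and a routine $\|\cdot\|_{2,\Tr}$-Lipschitz approximation (uniform in $m$ since $\|u_m\|_\infty=1$) upgrades this to $\lim_m\|E_B(a^*u_mb)\|_{2,\Tr}=0$ for all $a,b\in pM$, hence $(2)$. I expect the main obstacle to be organizing this final step so that the truncation error is controlled \emph{uniformly over all} $u\in\mathcal{U}(A)$ while simultaneously diagonalizing over both the cut-off level $n$ and the countable dense set $D$; once the expectation-compatibility identity and the uniform estimate are in place, Popa's finite-dimensional criterion does the rest.
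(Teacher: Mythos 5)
Your proposal is correct and takes essentially the same route as the paper's proof: the corner-compatibility $E_{qBq}(\,\cdot\,)=qE_B(\,\cdot\,)q$ for the implication $(2)\Rightarrow(1)$, and for $(1)\Rightarrow(2)$ an increasing sequence of $\Tr$-finite projections $q_n\in B$, Popa's finite criterion in each corner $e_nMe_n$ applied to a countable dense family of test elements, a truncation estimate that is uniform over $u\in\mathcal{U}(A)$, and a diagonal selection of unitaries followed by an $L^2$-approximation step. The only deviations are organizational (you use a $\|\cdot\|_{2,\Tr}$-dense subset of the unit ball of $pM$ and decouple the cutoff index $n(m)$ from the unitary index $m$, whereas the paper uses a $\ast$-strongly dense subset of $(M)_1$ and ties both to the same index $n$), so this is in substance the same argument.
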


\begin{proof}
We prove both directions.

$(1) \Longleftarrow (2)$. Take a nonzero projection $q \in B$ such that $\Tr(q) < \infty$ and set $e = p \vee q$. Write $\lambda = \Tr(e)$. For all $x, y \in pMq$, using the $\| \cdot \|_2$-norm with respect to the normalized trace on $eMe$, we have
\begin{equation*}
\|E_{qBq}(x^* u_n y)\|_2 = \lambda^{-1/2} \|E_B(x^* u_n y)\|_{2, \Tr} \to 0.
\end{equation*}
This means exactly that $A \npreceq_{eMe} qBq$.

$(1) \Longrightarrow (2)$. Let $(q_n)$ be an increasing sequence of projections in $B$ such that $q_n \to 1$ strongly and $\Tr(q_n) < \infty$. Set $e_n = p \vee q_n$. Let $\{x_k : k \in \N\}$ be a $\ast$-strongly dense subset of $(M)_1$ (the unit ball of $M$). Since $A \npreceq_{e_n M e_n} q_n B q_n$, we can take a unitary $u_n \in \mathcal{U}(A)$ such that 
\begin{equation*}
\|E_B(q_n x_i u_n x_j q_n)\|_{2, \Tr} < \frac1n, \forall 1 \leq i, j \leq n.
\end{equation*}
Note that $u_n = p u_n p$.

Let $\varepsilon > 0$ and fix $x, y \in (M)_1$.  Since $q_m \to 1$ strongly and since $\Tr(p) < \infty$, take $m \in \N$ large enough such that 
\begin{equation*}
\| q_m xp - xp \|_{2, \Tr} + \| pyq_m - py \|_{2, \Tr} < \varepsilon.
\end{equation*}
Since $\Tr(q_m) < \infty$, next choose $i, j \in \N$ such that 
\begin{equation*}
\| q_m x p - q_m x_i \|_{2, \Tr} + \| pyq_m - x_j q_m \|_{2, \Tr} < \varepsilon.
\end{equation*}
Now, for every $n \in \N$, we have
\begin{eqnarray*}
\|E_B(x u_n y)\|_{2, \Tr} & = & \|E_B(x p u_n p y)\|_{2, \Tr} \\
& \leq & \|E_B(q_m x p u_n py q_m)\|_{2, \Tr} + \varepsilon \\
& \leq & \|E_B(q_m x_i u_n x_j q_m)\|_{2, \Tr} + 2 \varepsilon. 
\end{eqnarray*}
Therefore, if $n \geq \max\{m, i, j\}$, we get
\begin{equation*}
\|E_B(x u_n y)\|_{2, \Tr} \leq \frac1n + 2 \varepsilon.
\end{equation*}
\end{proof}

Write $\Tr_n$ for the non-normalized faithful trace on $\mathbf{M}_n(\C)$. The faithful normal semifinite trace $\Tr_n \otimes \Tr$ on $\mathbf{M}_n(\C) \otimes M$ will be simply denoted by $\Tr$. Observe that if $A \preceq_M B$ in the sense of Lemma $\ref{intertwining}$, then there exist $n \geq 1$, a nonzero projection $q \in B^n$ such that $\Tr(q) < \infty$, a nonzero partial isometry $v \in \mathbf{M}_{1, n}(\C) \otimes M$ and a unital $\ast$-homomorphism $\psi : A \to qB^nq$ such that $xv = v\psi(x)$, $\forall x\in A$. In the case when $A$ and $B$ are maximal abelian, one can get a more precise result. This is an analog of a result by Popa \cite[Theorem A.1]{popa2001} for semifinite von Neumann algebras.

\begin{prop}\label{intertwining-masa}
Let $(M, \Tr)$ be a semifinite von Neumann algebra. Let $B \subset M$ be a maximal abelian von Neumann subalgebra such that $\Tr_{|B}$ is still semifinite. Let $p \in M$ be a non-zero projection such that $\Tr(p) < \infty$ and $A \subset pMp$ a maximal abelian von Neumann subalgebra. The following are equivalent:
\begin{enumerate}
\item $A \preceq_M B$ in the sense of Lemma $\ref{intertwining}$.
\item There exists a nonzero partial isometry $v \in M$ such that $vv^* \in A$, $v^*v \in B$ and $v^* A v = B v^*v$.
\end{enumerate}
\end{prop}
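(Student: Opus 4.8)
The proposition asserts that for maximal abelian subalgebras $A \subset pMp$ and $B \subset M$ in a semifinite von Neumann algebra, the abstract intertwining $A \preceq_M B$ is equivalent to the existence of a single partial isometry $v$ conjugating $B v^*v$ onto $A$. This is the semifinite analog of Popa's Theorem A.1. The direction $(2) \Rightarrow (1)$ is easy: if such a $v$ exists then $v^* A v = B v^*v$ immediately gives an intertwiner (take $\psi(a) = v^* a v$), so I would dispose of it quickly.

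**The main direction.**

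For $(1) \Rightarrow (2)$, the plan is to start from the refined form of intertwining recorded just before the statement: since $A \preceq_M B$, there exist $n \geq 1$, a $\Tr$-finite projection $q \in B^n$, a unital $\ast$-homomorphism $\psi : A \to qB^nq$ and a nonzero partial isometry $v_0 \in \mathbf{M}_{1,n}(\C) \otimes M$ with $a v_0 = v_0 \psi(a)$ for all $a \in A$. First I would pass to $v_0^* v_0 \in \psi(A)' \cap qB^nq$ and $v_0 v_0^* \in A' \cap pMp = A$ (using that $A$ is maximal abelian in $pMp$). The goal is to reduce $n$ to $1$ and to replace the homomorphism into a matrix amplification by a genuine partial isometry inside $M$. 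The key leverage is that $B$ is \emph{abelian}: the image $\psi(A) \subset qB^nq$ is abelian, and $B^n = \mathbf{M}_n(\C) \otimes B$ has abelian corner $qB^nq$ only after cutting appropriately. I would use that $\psi(A)' \cap qB^nq$ is itself abelian (since $qB^nq$ need not be abelian, but its intersection with the commutant of the maximal abelian $\psi(A)$ carries enough structure) to find a minimal projection direction and thereby shrink the amplification.

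**Reducing to a genuine partial isometry and upgrading to an equality.**

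Concretely, I would cut $v_0$ by a suitable minimal central-type projection of $\psi(A)' \cap qB^nq$ so that the new intertwiner lands in a single column, i.e.\ effectively $n=1$, yielding a partial isometry $v \in M$ with $v^* v \in B$, $v v^* \in A$ (here I use maximal abelianness of $B$ in $M$ to see $v^* v \in \psi(A)' \cap qBq \subseteq B$, since $B' \cap M = B$). At this stage $v^* A v \subseteq B v^* v$. The remaining work is to \emph{maximize} $v$ so that the inclusion becomes the equality $v^* A v = B v^* v$: I would take $v$ with $v v^*$ of maximal trace among all partial isometries satisfying $v^* A v \subseteq B v^* v$ and $v v^* \in A$, $v^* v \in B$, and argue by contradiction that if $B v^* v \neq v^* A v$ then one could enlarge $v v^*$. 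This maximality/exhaustion argument, carried out with $\Tr$-finite cut-downs so that all relevant traces are finite (using $\Tr(p) < \infty$ and semifiniteness of $\Tr_{|B}$), is where the semifinite setting requires genuine care.

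**The main obstacle.**

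The hardest point is the exhaustion argument producing equality rather than mere containment, together with verifying that the maximal $v$ still satisfies $v v^* \in A$ and $v^* v \in B$ after the enlargement step. In the tracial case Popa handles this by a standard maximality argument; in the semifinite case one must ensure at each step that the projections involved remain $\Tr$-finite so that $L^2$-estimates and the finite-trace intertwining of Lemma~\ref{intertwining} apply on the corner $eMe$ with $e = p \vee q$. I expect the bookkeeping of these finiteness conditions, rather than any deep new idea, to be the principal technical burden.
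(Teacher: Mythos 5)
There is a genuine gap in your argument, and it sits exactly where the real work of the proof lies. From the intertwining relation $a v_0 = v_0 \psi(a)$ one only gets $v_0^*v_0 \in \psi(A)' \cap qM^nq$, \emph{not} $\psi(A)' \cap qB^nq$ as you write; and later, your justification that $v^*v \in B$ ``since $B' \cap M = B$'' does not work. Maximal abelianness of $B$ gives $(Bq)' \cap qMq = Bq$, but what you know is that $v^*v$ commutes with $\psi(A)$, and $\psi(A)$ is in general a \emph{proper} (possibly very small) subalgebra of $Bq$, so its relative commutant $Q := \psi(A)' \cap qMq$ can be much larger than $Bq$ and nonabelian. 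Getting the right support projection into $B$ is precisely the crux, and the paper's proof spends its main effort there: setting $f = w^*w \in Q$, one shows by spatiality that $fQf = (w^*Aw)' \cap fMf = w^*Aw$ is abelian (this uses that $A$ is maximal abelian in $pMp$), i.e.\ $f$ is an \emph{abelian projection} in the finite von Neumann algebra $Q$; since $Bq \subset Q$ is maximal abelian, \cite[Lemma C.2]{vaesbern} then yields a partial isometry $u \in Q$ with $uu^* = f$ and $u^*Qu \subset Bq$, and $v := wu$ is the partial isometry one wants. Your proposal contains no substitute for this step, and the ``minimal central-type projection'' cut you suggest does not produce it.

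Conversely, the step you flag as the principal difficulty --- the exhaustion/maximality argument upgrading $v^*Av \subseteq Bv^*v$ to equality --- is not needed at all. Once $vv^* \in A$ and $v^*v \in B$, conjugation by $v$ is a spatial isomorphism of $vv^*Mvv^*$ onto $v^*vMv^*v$ carrying the maximal abelian algebra $Avv^*$ onto $v^*Av$; since $Bv^*v$ is abelian and contains $v^*Av$, it lies in $(v^*Av)' \cap v^*vMv^*v = v^*Av$, forcing equality in one line. So no enlargement procedure or trace bookkeeping is required for that part. (Your reduction to $n=1$ is also vaguer than what is actually done: the paper first conjugates $\psi(A)$ into the diagonal $\Diag_n(q_1B,\dots,q_nB)$ inside the finite type ${\rm I}$ algebra $q(\mathbf{M}_n(\C)\otimes B)q$, then cuts by a diagonal projection --- but that is a repairable detail, whereas the missing abelian-projection argument is not.)
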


\begin{proof}
We only need to prove $(1) \Longrightarrow (2)$. The proof is very similar to the one of \cite[Theorem A.1]{popa2001}. We will use exactly the same reasoning as in the proof of \cite[Theorem C.3]{vaesbern}.

Since $A \preceq_M B$ in the sense of Lemma $\ref{intertwining}$, we can find $n \geq 1$, a nonzero $\Tr$-finite projection $q \in \mathbf{M}_n(\C) \otimes B$, a nonzero partial isometry $w \in \mathbf{M}_{1, n}(\C) \otimes pM$ and a unital $\ast$-homomorphism $\psi : A \to q(\mathbf{M}_n(\C) \otimes B)q$ such that $xw = w \psi(x)$, $\forall x \in A$. Since we can replace $q$ by an equivalent projection in $\mathbf{M}_n(\C) \otimes B$, we may assume $q = \Diag_n(q_1, \dots, q_n)$ (see for instance second item in \cite[Lemma C.2]{vaesbern}). Observe now that $\Diag_n(q_1B, \dots, q_nB)$ is maximal abelian in $q(\mathbf{M}_n(\C) \otimes B)q$. Since $B$ is abelian, $q(\mathbf{M}_n(\C) \otimes B)q$ is of finite type ${\rm I}$. Since $A$ is abelian, up to unitary conjugacy by a unitary in $q(\mathbf{M}_n(\C) \otimes B)q$, we may assume that $\psi(A)Ê\subset \Diag_n(q_1B, \dots, q_nB)$ (see \cite[Lemma C.2]{vaesbern}). We can now cut down $\psi$ and $w$ by one of projections $(0, \dots, q_i, \dots, 0)$ and assume $n = 1$ from the beginning.

Write $e = ww^* \in A$ (since $A' \cap pMp = A$) and $f = w^*w \in \psi(A)' \cap qMq$. By spatiality, we have
$$f(\psi(A)' \cap qMq)f = (\psi(A)f)' \cap fMf = (w^*Aw)' \cap fMf = w^* A w,$$
which is abelian. Let $Q := \psi(A)' \cap qMq$, which is a finite von Neumann algebra. Since $Bq \subset Q$ is maximal abelian and $f \in Q$ is an abelian projection, \cite[Lemma C.2]{vaesbern} yields a partial isometry $u \in Q$ such that $uu^* = f$ and $u^*Qu \subset Bq$. Define now $v = wu$. We get 
$$v^* A v = u^* w^* A w u = u^* f(\psi(A)' \cap qMq)f u \subset Bq.$$
Moreover $vv^* = wuu^*w^* = wfw^* = e \in A$. Since $v^*A v$ and $Bv^*v$ are both maximal abelian, we get $v^* A v = B v^*v$.
\end{proof}

\subsection{The noncommutative flow of weights}

Let $\mathcal{M}$ be a von Neumann algebra. Let $\varphi$ be a faithful normal state on $\mathcal{M}$. Denote by $\mathcal{M}^\varphi$ the centralizer and by $M = \mathcal{M} \rtimes_{\sigma^\varphi} \R$ the {\it core} of $\mathcal{M}$, where $\sigma^\varphi$ is the modular group associated with the state $\varphi$. Denote by $\pi_{\sigma^\varphi} : \mathcal{M} \to M$ the representation of $\mathcal{M}$ in its core $M$, i.e.\ $\pi_{\sigma^\varphi}(x) = (\sigma_{-t}^\varphi(x))_{t \in \R}$ for every $x \in \mathcal{M}$, and denote by $\lambda^\varphi(s)$ the unitaries in $L(\R)$ implementing the action $\sigma^\varphi$. Consider the {\it dual weight} $\widehat{\varphi}$ on $M$ (see \cite{takesaki73}) which satisfies the following:
\begin{eqnarray*}
\sigma_t^{\widehat{\varphi}}(\pi_{\sigma^\varphi}(x)) & = & \pi_{\sigma^\varphi}(\sigma_t^\varphi(x)), \forall x \in \mathcal{M} \\
\sigma_t^{\widehat{\varphi}}(\lambda^\varphi(s)) & = & \lambda^\varphi(s), \forall s \in \R.
\end{eqnarray*} 
Note that $\widehat{\varphi}$ is a semifinite faithful normal weight on $M$. Write $\theta^\varphi$ for the dual action of $\sigma^\varphi$ on $M$, where we identify $\R$ with its Pontryagin dual. Take now $h_\varphi$ a nonsingular positive self-adjoint operator affiliated with $L(\R)$ such that $h_\varphi^{is} = \lambda^\varphi(s)$, for any $s \in \R$. Define $\Tr_\varphi := \widehat{\varphi}(h_\varphi^{-1} \cdot)$. We get that $\Tr_\varphi$ is a semifinite faithful normal trace on $M$ and the dual action $\theta^\varphi$ {\it scales} the trace $\Tr_\varphi$:
\begin{equation*}
\Tr_\varphi \circ \theta^\varphi_s(x) = e^{-s} \Tr_\varphi(x), \forall x \in M_+, \forall s \in \R.
\end{equation*}
Moreover, the canonical faithful normal conditional expectation $E_{L(\R)} : M \to L(\R)$ defined by $E_{L(\R)}(x \lambda^\varphi(s)) = \varphi(x) \lambda^\varphi(s)$ preserves the trace $\Tr_\varphi$, i.e.
\begin{equation*}
\Tr_\varphi \circ E_{L(\R)} (x) = \Tr_\varphi(x), \forall x \in M_+.
\end{equation*}

There is also a functorial construction of the core of the von Neumann algebra $\mathcal{M}$ which does not rely on the choice of a particular state $\varphi$ on $\mathcal{M}$ (see \cite{{connes73}, {connestak}, {falcone}}). This is called the {\it noncommutative flow of weights}. We will simply denote it by $(\mathcal{M} \subset M, \theta, \Tr)$, where $M$ is the {\em core} of $\mathcal{M}$, $\theta$ is the dual action of $\R$ on the core $M$ and $\Tr$ is the semifinite faithful normal trace on $M$ such that $\Tr \circ \theta_s = e^{-s}\Tr$, for any $s \in \R$. Let $\varphi$ be a faithful normal state on $\mathcal{M}$. It follows from \cite[Theorem 3.5]{falcone} and \cite[Theorem ${\rm XII.6.10}$]{takesakiII} that there exists a natural $\ast$-isomorphism
\begin{equation*}
\Pi_{\varphi} : \mathcal{M} \rtimes_{\sigma^\varphi} \R \to M
\end{equation*}
such that 
\begin{eqnarray*}
\Pi_{\varphi} \circ \theta^\varphi & = & \theta \circ \Pi_{\varphi} \\
\Tr_\varphi & = & \Tr \circ \Pi_{\varphi} \\
\Pi_{\varphi}(\pi_{\sigma^\varphi}(\mathcal{M})) & = & \mathcal{M}.
\end{eqnarray*}
Let now $\varphi, \psi$ be two faithful normal states on $\mathcal{M}$. Through the $\ast$-isomorphism $\Pi_{\varphi, \psi} := \Pi_\psi^{-1} \circ \Pi_\varphi : \mathcal{M} \rtimes_{\sigma^\varphi} \R \to \mathcal{M} \rtimes_{\sigma^\psi} \R$, we will identify 
$$(\pi_{\sigma^\varphi}(\mathcal{M}) \subset \mathcal{M} \rtimes_{\sigma^\varphi} \R, \theta^\varphi, \Tr_\varphi) \mbox{ with } (\pi_{\sigma^\psi}(\mathcal{M}) \subset \mathcal{M} \rtimes_{\sigma^\psi} \R, \theta^\psi, \Tr_\psi).$$ 
In the sequel, we will refer to the triple $(\mathcal{M} \subset M, \theta, \Tr)$ as the noncommutative flow of weights. By Takesaki's Duality Theorem \cite{takesaki73}, we have
$$(\mathcal{M} \rtimes_\sigma \R) \rtimes_{(\theta_s)} \R \cong \mathcal{M} \overline{\otimes} \mathbf{B}(L^2(\R)).$$
In particular, $\mathcal{M}$ is amenable if and only if $M = \mathcal{M} \rtimes_\sigma \R$ is amenable. The following well-known proposition will be useful for us.

\begin{prop}
Let $\varphi$ be a faithful normal state on $\mathcal{M}$. Let $M =  \mathcal{M} \rtimes_{\sigma^{\varphi}} \R$ be as above. Then $L(\R)' \cap M = \mathcal{M}^\varphi \overline{\otimes} L(\R)$. In particular, if $\mathcal{M}^\varphi = \C$ then $L(\R)$ is maximal abelian in $M$.
\end{prop}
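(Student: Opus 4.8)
The plan is to reformulate the relative commutant as a fixed-point algebra and then exploit a Fourier-type decomposition of the crossed product. Write $\pi = \pi_{\sigma^\varphi}$ and recall that $L(\R)$ is generated by the unitaries $\lambda^\varphi(r)$, which satisfy $\lambda^\varphi(r)\pi(x)\lambda^\varphi(r)^* = \pi(\sigma_r^\varphi(x))$. Hence $y \in M$ commutes with $L(\R)$ if and only if it is fixed by the inner flow $\beta_r := \Ad(\lambda^\varphi(r))$, so that
$$L(\R)' \cap M = M^\beta, \qquad \beta_r \circ \pi = \pi \circ \sigma_r^\varphi, \quad \beta_r|_{L(\R)} = \id.$$
The inclusion $\mathcal{M}^\varphi \overline{\otimes} L(\R) \subseteq M^\beta$ is immediate: for $x \in \mathcal{M}^\varphi$ one has $\beta_r(\pi(x)) = \pi(\sigma_r^\varphi(x)) = \pi(x)$, so $\pi(\mathcal{M}^\varphi)$ and $L(\R)$ are both pointwise $\beta$-fixed. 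Realizing $M$ on $L^2(\R) \otimes L^2(\mathcal{M}, \varphi)$ one checks that these two algebras sit in tensor position (the former acts as $1 \otimes \mathcal{M}^\varphi$, the latter as $L(\R) \otimes 1$), hence they generate a copy of $\mathcal{M}^\varphi \overline{\otimes} L(\R)$.

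For the reverse inclusion I would bring in the dual action $\theta$ on $M$, characterized by $\theta_p(\pi(x)) = \pi(x)$ and $\theta_p(\lambda^\varphi(s)) = e^{ips}\lambda^\varphi(s)$; it commutes with $\beta$ and preserves $L(\R)$. Given $y \in M^\beta$ and $f \in L^1(\R)$, set $y_f := \int_\R f(p)\, \theta_p(y)\, dp$, a $\sigma$-weakly convergent integral defined through the unitary implementation of $\theta$. Since $\theta$ commutes with $\beta$, the element $y_f$ is again $\beta$-fixed, and choosing $\widehat f$ of compact support concentrates $y_f$ in a compact part of the dual spectrum. The key point is that such a spectrally truncated element admits a genuine $\mathcal{M}$-valued representation $y_f = \int_\R \pi(a(s))\, \lambda^\varphi(s)\, ds$ with $a \in L^2(\R, L^2(\mathcal{M}, \varphi))$, and that this representation is unique. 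Comparing $y_f = \beta_r(y_f) = \int_\R \pi(\sigma_r^\varphi(a(s)))\, \lambda^\varphi(s)\, ds$ then forces $\sigma_r^\varphi(a(s)) = a(s)$ for a.e.\ $s$ and every $r$, i.e.\ $a(s) \in \mathcal{M}^\varphi$, so $y_f \in \mathcal{M}^\varphi \overline{\otimes} L(\R)$. Letting $f$ run through an approximate identity gives $y_f \to y$ $\sigma$-weakly, and since $\mathcal{M}^\varphi \overline{\otimes} L(\R)$ is $\sigma$-weakly closed we conclude $y \in \mathcal{M}^\varphi \overline{\otimes} L(\R)$. The final assertion is immediate: if $\mathcal{M}^\varphi = \C$ the right-hand side collapses to $L(\R)$, which, being abelian, is thus maximal abelian in $M$.

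The main obstacle is the rigorous continuous Fourier analysis underlying the representation $y_f = \int \pi(a(s))\,\lambda^\varphi(s)\,ds$ and its uniqueness. Unlike the discrete-group case there is no honest Fourier expansion over $\R$, and — since $\mathcal{M}$ is typically of type ${\rm III}$ — there is no normal conditional expectation of $M$ onto $\pi(\mathcal{M})$ from which to read off coefficients directly; only the unbounded operator-valued (dual) weight is available. This is precisely why one first truncates the dual spectrum: on spectrally compact elements the decomposition becomes an $L^2$-statement, amounting to an identification of $L^2(M, \Tr_\varphi)$ with $L^2(\R) \otimes L^2(\mathcal{M}, \varphi)$ intertwining $\beta$ with $\sigma^\varphi$ in the fibre, which can be set up using the $\Tr_\varphi$-preserving conditional expectation $E_{L(\R)}$ together with the scaling relation $\Tr_\varphi \circ \theta_s = e^{-s}\Tr_\varphi$. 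Controlling this step carefully is the crux; the remainder is formal.
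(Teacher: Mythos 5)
Your reduction of $L(\R)' \cap M$ to the fixed-point algebra of $\beta_r = \Ad(\lambda^\varphi(r))$, the easy inclusion $\mathcal{M}^\varphi \overline{\otimes} L(\R) \subseteq M^\beta$, and the observation that smoothing $y_f = \int f(p)\theta_p(y)\,dp$ stays inside $M^\beta$ are all correct. The gap is the key claim: that an element with compact $\theta$-spectrum admits a (unique) representation $y_f = \int_\R \pi(a(s))\lambda^\varphi(s)\,ds$ with $a \in L^2(\R, L^2(\mathcal{M},\varphi))$. Having compact Arveson spectrum for the dual action is \emph{not} the same as being square-integrable for $\Tr_\varphi$: the elements $1$, $\pi(x)$, and $\pi(x)\lambda^\varphi(s_0)$ all have one-point $\theta$-spectrum, none lies in $L^2(M,\Tr_\varphi)$, and their ``Fourier transforms'' are point masses rather than $L^2$-densities. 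These examples genuinely occur in your argument: smoothing acts on the Fourier side by pointwise multiplication by $\widehat{f}$ (it truncates the support, it does not mollify a singular measure into a function), and since your approximate identity must have $\widehat{f}(0)=1$, already $y = 1 \in M^\beta$ gives $y_f = 1$ for every such $f$ --- an element to which the claimed $L^2$-representation and its uniqueness simply do not apply. Consequently the central deduction, $\sigma_r^\varphi(a(s)) = a(s)$ hence $a(s) \in \mathcal{M}^\varphi$, has nothing to act on, and no choice of identification of $L^2(M,\Tr_\varphi)$ with $L^2(\R)\otimes L^2(\mathcal{M},\varphi)$ can repair a statement about elements that are not in that $L^2$-space.

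For comparison, the paper avoids Fourier truncation entirely and argues with modular theory: since $\sigma_t^{\widehat{\varphi}}(\pi(x)) = \pi(\sigma_t^\varphi(x))$ and $\sigma_t^{\widehat{\varphi}}(\lambda^\varphi(s)) = \lambda^\varphi(s)$, the modular group of the dual weight agrees with $\Ad(\lambda^\varphi(t))$ on generators, hence on all of $M$ (the paper deduces this by invoking Connes' theorem that the cocycle perturbation of $\widehat{\varphi}$ by $(1\otimes\lambda^\varphi(t))$ is a trace); therefore $L(\R)' \cap M = M^\beta$ is exactly the centralizer of $\widehat{\varphi}$. Then $\Delta_{\widehat{\varphi}}^{it} = \Delta_\varphi^{it} \otimes 1$ forces this centralizer into $\mathcal{M}^\varphi \overline{\otimes} \mathbf{B}(L^2(\R))$, while maximal abelianness of $L(\R)$ in $\mathbf{B}(L^2(\R))$ gives $L(\R)' \cap M \subset \mathcal{M} \overline{\otimes} L(\R)$; intersecting the two finishes the proof. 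If you want to rescue your $L^2$-strategy, the correct way to land in $L^2(M,\Tr_\varphi)$ is not spectral truncation but cutting by $\Tr$-finite projections $q \in L(\R)$: since $y$ commutes with $L(\R)$, $yq$ is still $\beta$-invariant and $\|yq\|_{2,\Tr} \leq \|y\|_\infty \Tr(q)^{1/2} < \infty$. But making the resulting $L^2$-identification equivariant for $\beta$ requires relating $\Tr_\varphi$ to $\widehat{\varphi}$, i.e.\ precisely the dual-weight modular machinery that the paper's short argument is built on.
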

\begin{proof}
We regard $M = \mathcal{M} \rtimes_{\sigma^\varphi} \R$ generated by $\pi(x) = (\sigma_{-t}^\varphi(x))_{t \in \R}$, for $x \in \mathcal{M}$, and $1 \otimes \lambda^\varphi(t)$, for $t \in \R$. Therefore $M \subset \mathcal{M} \overline{\otimes} \mathbf{B}(L^2(\R))$. Since $L(\R) \subset \mathbf{B}(L^2(\R))$ is maximal abelian, we get $L(\R)' \cap M \subset \mathcal{M} \overline{\otimes} L(\R)$.

Denote by $\widehat{\varphi}$ the dual weight of $\varphi$ on $M$ (see e.g.\ \cite{takesaki73}). The following relations are true: for every $s, t \in \R$, for every $x \in \mathcal{M}$,
\begin{eqnarray*}
\sigma_t^{\widehat{\varphi}}(\pi(x)) & = & \pi(\sigma^\varphi_t(x)) \\
\sigma_t^{\widehat{\varphi}}(1 \otimes \lambda^\varphi(s)) & = & 1 \otimes \lambda^\varphi(s) \\
\Delta_{\widehat{\varphi}}^{it} & = & \Delta_\varphi^{it} \otimes 1.
\end{eqnarray*}
Since $(1 \otimes \lambda^\varphi(s))_{s \in \R}$ is a $1$-cocycle for $(\sigma_t^{\widehat{\varphi}})$, \cite[Th\'eor\`eme 1.2.4]{connes73} implies that the faithful normal semifinite weight $\Tr$ given by $\sigma_t^{\Tr} = (1 \otimes \lambda^\varphi(t))^* \sigma_t^{\widehat{\varphi}} (1 \otimes \lambda^\varphi(t))$ is a trace on $M$. This implies that $L(\R)' \cap M$ is exactly the centralizer of the weight $\widehat{\varphi}$. Since $\Delta_{\widehat{\varphi}}^{it}  =  \Delta_\varphi^{it} \otimes 1$, for every $t \in \R$, we get $L(\R)' \cap M \subset \mathcal{M}^\varphi \overline{\otimes} \mathbf{B}(L^2(\R))$. Thus $L(\R)' \cap M = \mathcal{M}^\varphi \overline{\otimes} L(\R)$.
\end{proof}

\subsection{Basic facts on Cartan subalgebras}

\begin{df}
Let $\mathcal{M}$ be any von Neumann algebra. A von Neumann subalgebra $A \subset \mathcal{M}$ is said to be a {\em Cartan subalgebra} if the following conditions hold:
\begin{enumerate}
\item $A$ is maximal abelian, i.e. $A = A' \cap \mathcal{M}$.
\item There exists a faithful normal conditional expectation $E : \mathcal{M} \to A$.
\item The normalizer $\mathcal{N}_{\mathcal{M}}(A) = \{u \in \mathcal{U}(\mathcal{M}) : u A u^* = A\}$ generates $\mathcal{M}$.
\end{enumerate}
\end{df}

Let $A \subset \mathcal{M}$ be a Cartan subalgebra. Let $\tau$ be a faithful normal tracial state on $A$. Then $\varphi = \tau \circ E$ is a faithful normal state on $\mathcal{M}$. Moreover $A \subset \mathcal{M}^\varphi$, where $\mathcal{M}^\varphi$ denotes the centralizer of $\varphi$. Write $(\sigma_t^\varphi)$ for the modular automorphism group. Denote by $M = \mathcal{M} \rtimes_{\sigma^\varphi} \R$ the continuous core and write $\lambda^\varphi(t)$ for the unitaries in $M$ which implement the modular action. The following  proposition is well-known and will be a crucial tool in order to prove Theorem B. We include a proof for the reader's convenience.

\begin{prop}
The von Neumann subalgebra $A \overline{\otimes} L(\R) \subset \mathcal{M} \rtimes_{\sigma^\varphi} \R$ is a Cartan subalgebra.
\end{prop}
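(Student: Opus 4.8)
The plan is to verify directly the three defining properties of a Cartan subalgebra for $A \overline{\otimes} L(\R) \subset M$, where $M = \mathcal{M} \rtimes_{\sigma^\varphi} \R$. The starting observation is that $A \subset \mathcal{M}^\varphi$ forces $\sigma_t^\varphi|_A = \id$ for every $t$, so that $\pi(A)$ commutes with the unitaries $\lambda^\varphi(s)$ (indeed $\lambda^\varphi(s)\pi(a)\lambda^\varphi(s)^* = \pi(\sigma_s^\varphi(a)) = \pi(a)$). Hence $\pi(A) \vee L(\R)$ is abelian and coincides with the crossed product $A \rtimes_{\sigma^\varphi} \R = A \overline{\otimes} L(\R)$. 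All three verifications exploit this commutation.

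For maximal abelianness, I would first intersect with $L(\R)' \cap M$: since $L(\R) \subset A \overline{\otimes} L(\R)$, the preceding proposition gives $(A \overline{\otimes} L(\R))' \cap M \subset L(\R)' \cap M = \mathcal{M}^\varphi \overline{\otimes} L(\R)$. Inside this tensor product one commutes with $\pi(A) = A \overline{\otimes} 1$; by the commutation theorem for tensor products this yields $(A' \cap \mathcal{M}^\varphi) \overline{\otimes} L(\R)$, and since $A$ is maximal abelian in $\mathcal{M}$ one has $A' \cap \mathcal{M}^\varphi \subset A' \cap \mathcal{M} = A$, so $A' \cap \mathcal{M}^\varphi = A$ and the relative commutant is exactly $A \overline{\otimes} L(\R)$. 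For the conditional expectation, note that $E : \mathcal{M} \to A$ is $\varphi$-preserving, as $\varphi \circ E = \tau \circ E \circ E = \tau \circ E = \varphi$; by Takesaki's uniqueness of the $\varphi$-preserving expectation onto the globally $\sigma^\varphi$-invariant subalgebra $A$, $E$ commutes with $\sigma^\varphi$, hence extends to a faithful normal conditional expectation $E \rtimes \id : M \to A \rtimes_{\sigma^\varphi} \R = A \overline{\otimes} L(\R)$.

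The main work, and the expected obstacle, is the third condition $\mathcal{N}_M(A \overline{\otimes} L(\R))'' = M$, precisely because a normalizing unitary $u \in \mathcal{N}_{\mathcal{M}}(A)$ need not lie in $\mathcal{M}^\varphi$, so $\pi(u)$ need not commute with $L(\R)$. The key lemma I would establish is that nonetheless $u^*\sigma_t^\varphi(u) \in A$ for every $u \in \mathcal{N}_{\mathcal{M}}(A)$: for $a \in A$ one has on one hand $\sigma_t^\varphi(uau^*) = \sigma_t^\varphi(u)\, a\, \sigma_t^\varphi(u)^*$ (using $\sigma_t^\varphi(a) = a$), and on the other hand $uau^* \in A \subset \mathcal{M}^\varphi$ gives $\sigma_t^\varphi(uau^*) = uau^*$; comparing the two expressions shows that $u^*\sigma_t^\varphi(u)$ commutes with every $a \in A$, whence $u^*\sigma_t^\varphi(u) \in A' \cap \mathcal{M} = A$ by maximal abelianness.

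Granting this lemma, I would compute, using the crossed-product relation $\lambda^\varphi(s)\pi(x) = \pi(\sigma_s^\varphi(x))\lambda^\varphi(s)$, that $\pi(u)\lambda^\varphi(s)\pi(u)^* = \pi(u\,\sigma_s^\varphi(u^*))\lambda^\varphi(s)$. Since $u\,\sigma_s^\varphi(u^*) = u\,(u^*\sigma_s^\varphi(u))^*\,u^* \in uAu^* = A$ by the lemma together with $u \in \mathcal{N}_{\mathcal{M}}(A)$, this element lies in $A \overline{\otimes} L(\R)$; combined with $\pi(u)\pi(a)\pi(u)^* = \pi(uau^*) \in \pi(A)$ and the symmetric statement applied to $u^*$, it follows that $\pi(u) \in \mathcal{N}_M(A \overline{\otimes} L(\R))$. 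As $L(\R)$ trivially normalizes $A \overline{\otimes} L(\R)$, the von Neumann algebra generated by the normalizer contains both $L(\R)$ and $\pi(\mathcal{N}_{\mathcal{M}}(A))'' = \pi(\mathcal{M})$, hence all of $M$. This establishes the third condition and completes the proof.
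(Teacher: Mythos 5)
Your proof is correct and follows essentially the same route as the paper's: the heart of both arguments is the identical key lemma that $u^*\sigma_t^\varphi(u)\in A'\cap\mathcal{M}=A$ for every $u\in\mathcal{N}_{\mathcal{M}}(A)$, which then shows $\pi(u)$ normalizes $A\overline{\otimes}L(\R)$, so that the normalizer contains $\pi(\mathcal{M})$ and $L(\R)$ and hence generates $M$. Your treatment of the first two conditions differs only cosmetically (you route maximal abelianness through the relative commutant $L(\R)'\cap M=\mathcal{M}^\varphi\overline{\otimes}L(\R)$ instead of invoking directly that a tensor product of maximal abelian subalgebras is maximal abelian in $\mathcal{M}\overline{\otimes}\mathbf{B}(L^2(\R))$, and you justify the expectation $F(x\lambda^\varphi(t))=E(x)\lambda^\varphi(t)$ via Takesaki's uniqueness rather than asserting it), and both variants are sound.
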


\begin{proof}
Since $A \subset \mathcal{M}$ and $L(\R) \subset \mathbf{B}(L^2(\R))$ are both maximal abelian, it follows that $A \overline{\otimes} L(\R)$ is maximal abelian in $\mathcal{M} \overline{\otimes} \mathbf{B}(L^2(\R))$. Therefore $A \overline{\otimes} L(\R)$ is maximal abelian in $\mathcal{M} \rtimes_{\sigma^\varphi} \R$. 

The faithful normal conditional expectation $F : \mathcal{M} \rtimes_{\sigma^\varphi} \R \to A \overline{\otimes} L(\R)$ is given by: $F(x \lambda^\varphi(t)) = E(x) \lambda^\varphi(t)$, $\forall x \in \mathcal{M}, \forall t \in \R$. Observe that $F$ preserves the canonical trace $\Tr_\varphi$.

It remains to show that $A \overline{\otimes} L(\R)$ is regular in $\mathcal{M} \rtimes_{\sigma^\varphi} \R$. Recall that $A \subset \mathcal{M}^\varphi$, so that $a \lambda^\varphi(t) = \lambda^\varphi(t) a$, for every $t \in \R$ and every $a \in A$. For every $t \in \R$, every $u \in \mathcal{N}_{\mathcal{M}}(A)$ and every $a \in A$, we have
\begin{eqnarray*}
\sigma_t^\varphi(u) u^* a & = & \sigma_t^\varphi(u) (u^* a u) u^* \\
& = & \sigma_t^\varphi(u u^* a u) u^* \\
& = & a \sigma_t^\varphi(u) u^*,
\end{eqnarray*}
so that $\sigma_t^\varphi(u)u^* \in A' \cap \mathcal{M} = A$. We moreover have
\begin{equation*}
u(a \lambda^\varphi(t))u^* = (uau^*) u\lambda^\varphi(t) = (ua u^*) (u \sigma^\varphi_t(u^*)) \lambda^\varphi(t),
\end{equation*}
so that $u (A \overline{\otimes} L(\R)) u^* = A \overline{\otimes} L(\R)$. Consequently, $A \overline{\otimes} L(\R) \subset \mathcal{M} \rtimes_{\sigma^\varphi} \R$ is regular.
\end{proof}

Assume that $\mathcal{M}$ is a type ${\rm II}$ von Neumann algebra. Then $M = \mathcal M \rtimes_\sigma \R$ is still of type ${\rm II}$. Assume now that $\mathcal{M}$ is a type ${\rm III}$ von Neumann algebra. Then $M$ is of type ${\rm II_\infty}$. Let $p \in A \overline{\otimes} L(\R)$ be a nonzero projection such that $\Tr(p) < \infty$, so that $pMp$ is of type ${\rm II_1}$. The next proposition shows that $(A \overline{\otimes} L(\R))p \subset pMp$ is a Cartan subalgebra.

\begin{prop}\label{masa}
Let $N$ be a type ${\rm II_\infty}$ von Neumann algebra with a faithful normal semifinite trace $\Tr$. Let $B \subset N$ be a maximal abelian $\ast$-subalgebra for which $\Tr_{|B}$ is still semifinite. Let $p \in B$ be a nonzero projection such that $\Tr(p) < \infty$. Then $\mathcal{N}_{pMp}(Bp)'' = p\mathcal{N}_M(B)''p$.
\end{prop}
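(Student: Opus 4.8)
The plan is to write $P:=\mathcal{N}_N(B)''$ (so that, with $M=N$, the claimed identity reads $\mathcal{N}_{pNp}(Bp)''=pPp$) and to prove the two inclusions separately. I first record the standing remarks: $B\subseteq P$ is maximal abelian since $B$ is already maximal abelian in $N$; hence $Bp=(Bp)'\cap pNp=p(B'\cap N)p$ is maximal abelian in $Q:=pNp$; and $B$ is diffuse because $N$ is of type ${\rm II}$ (a minimal projection of $B$ would be maximal abelian in its corner, hence a minimal projection of $N$, which is impossible). The inclusion $\mathcal{N}_{pNp}(Bp)''\subseteq pPp$ is then purely algebraic and uses neither the trace nor finiteness of $p$: given a unitary $v\in\mathcal{N}_{pNp}(Bp)$ (so $v^*v=vv^*=p$, $v=pvp$, $v(Bp)v^*=Bp$), I would set $u:=v+(1-p)$ and check that $u\in\mathcal{N}_N(B)$. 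Indeed, using $v=vp=pv$ and $p\in B$, the cross terms $vb(1-p)$ and $(1-p)bv^*$ vanish (because $bp\cdot(1-p)=0$), while $vbv^*=v(bp)v^*\in v(Bp)v^*=Bp$ and $(1-p)b(1-p)=b(1-p)$, so $uBu^*\subseteq Bp\oplus B(1-p)=B$, and symmetrically. Since $up=v$, we get $v=up\in\mathcal{N}_N(B)\cdot B\subseteq P$, hence $v=pvp\in pPp$; passing to the generated algebra gives the inclusion.

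For the reverse inclusion $pPp\subseteq\mathcal{N}_{pNp}(Bp)''$ I would first reduce to a single building block. Since $\operatorname{span}\mathcal{N}_N(B)$ is $\sigma$-weakly dense in $P$ and $x\mapsto pxp$ is $\sigma$-weakly continuous, $pPp$ is generated by the elements $pup$ with $u\in\mathcal{N}_N(B)$. A direct computation shows that each $pup$ is a partial isometry whose source $(u^*pu)\wedge p$ and range $(upu^*)\wedge p$ lie in $Bp$, and that $(pup)(Bp)(pup)^*\subseteq Bp$; in other words $pup\in\mathcal{G}\mathcal{N}_Q(Bp)$. So the whole inclusion reduces to the key lemma: in the finite algebra $Q=pNp$, with the diffuse maximal abelian subalgebra $A:=Bp$, every groupoid normalizer $v_0\in\mathcal{G}\mathcal{N}_Q(A)$ already lies in $\mathcal{N}_Q(A)''$.

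To prove this lemma I would work on the spectrum $(X,\mu)$ of $A=L^\infty(X,\mu)$, which is a finite measure space as $\Tr(p)<\infty$. Writing $s_0=v_0^*v_0$, $r_0=v_0v_0^*\in A$, the $\ast$-isomorphism $a\mapsto v_0av_0^*$ from $As_0$ onto $Ar_0$ is implemented by a partial Borel isomorphism $T$. On the fixed-point set of $T$ one has $s^F=r^F=:q$ and $v_0q$ commutes with $Aq$, so that cut lies in $A'\cap Q=A\subseteq\mathcal{N}_Q(A)''$. On the complement $T$ is fixed-point free, and the graph with edges $\{x,Tx\}$ has degree at most two, hence admits a Borel $3$-colouring; this partitions $s_0-q$ into three projections $s^{(k)}\in A$ with $s^{(k)}\perp r^{(k)}$, where $r^{(k)}:=v_0s^{(k)}v_0^*$. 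For each piece, source and range being orthogonal, $w_k:=v_0s^{(k)}+(v_0s^{(k)})^*+(p-s^{(k)}-r^{(k)})$ is a self-adjoint unitary normalizing $A$ in $Q$, and $v_0s^{(k)}=w_ks^{(k)}\in\mathcal{N}_Q(A)''$ (the identity $s^{(k)}v_0s^{(k)}=0$ forces $w_ks^{(k)}=v_0s^{(k)}$). Summing the finitely many pieces, $v_0=v_0q+\sum_k v_0s^{(k)}\in\mathcal{N}_Q(A)''$, which proves the lemma and hence the reverse inclusion.

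I expect the main obstacle to be exactly this last lemma, namely the passage from groupoid normalizers to genuine unitary normalizers, $\mathcal{G}\mathcal{N}_Q(A)''=\mathcal{N}_Q(A)''$. The difficulty is that the source and range projections of $v_0$ may overlap and the implemented partial isomorphism $T$ may have fixed points, so one cannot simply complete $v_0$ to a unitary; it is the orthogonalization via the Borel colouring, together with maximal abelianness to absorb the fixed-point part, that makes the completion possible. Finiteness of $\Tr(p)$ enters only to ensure that $Q=pNp$ is finite and that $A$ is the $L^\infty$ of a finite measure space, so that the spectral/Borel picture is available.
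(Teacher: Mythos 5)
Your proof is correct, and the two halves split exactly as in the paper, but your treatment of the hard inclusion $p\mathcal{N}_N(B)''p\subseteq\mathcal{N}_{pNp}(Bp)''$ is genuinely different. The easy inclusion is identical to the paper's: extend $v\in\mathcal{N}_{pNp}(Bp)$ to $u=v+(1-p)\in\mathcal{N}_N(B)$ and compress. For the hard one, the paper only records the algebraic identity $pup(Bp)=puBp=pBup=(Bp)pup$ for $u\in\mathcal{N}_N(B)$, placing $pup$ in the quasi-normalizer $\QN_{pNp}(Bp)$, and then quotes \cite[Theorem 2.7]{popa-shlyakhtenko}: for a maximal abelian subalgebra, the quasi-normalizer and the normalizer generate the same von Neumann algebra. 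You instead exploit the stronger observation that $pup$ is a partial isometry with source and range projections in $Bp$, hence a groupoid normalizer, and you prove by hand the Dye-type lemma that $\mathcal{G}\mathcal{N}_Q(A)\subseteq\mathcal{N}_Q(A)''$ for a masa $A$ in a finite von Neumann algebra (point realization of $a\mapsto v_0av_0^*$, absorption of the fixed-point part into $A'\cap Q=A$, Borel $3$-colouring to orthogonalize source and range, completion of each piece to a self-adjoint normalizing unitary); your verification of this lemma is sound. What the paper's route buys is brevity and freedom from measure theory; what yours buys is self-containedness. Four remarks: (i) your argument implicitly assumes separable predual (a standard measure space is needed for the point realization and the Kechris--Solecki--Todorcevic colouring), a hypothesis absent from the statement, though harmless in the paper's setting; (ii) the lemma you prove is a known fact which the paper itself invokes, for abelian subalgebras, as \cite[Lemme 2.2]{fang-groupoid} in the remark following the proof of Theorem D, so you could simply have cited it; (iii) the colouring can be replaced by the elementary device of covering the fixed-point-free part by the countably many sets $E_n\cap T^{-1}(X\setminus E_n)$, where $(E_n)$ is a separating sequence, each of which is disjoint from its $T$-image, and then summing countably many pieces; (iv) diffuseness of $B$, which you establish at the outset, is never actually used in your lemma.
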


\begin{proof}
The equality $(pBp)' \cap pMp = p(B' \cap M)p$ is well-known (see for instance \cite[Lemma 2.1]{masapopa}). Thus, $Bp$ is maximal abelian in $pMp$. Let $u \in \mathcal{N}_{M}(B)$. We have
\begin{equation*}
pup (Bp) = puBp = p Bu p = (Bp) pup.
\end{equation*}
It follows that $p\mathcal{N}_M(B)''p \subset \mathcal{QN}_{pMp}(Bp)''$. The normalizer and the quasi-normalizer of a maximal abelian subalgebra generate the same von Neumann algebra (see \cite[Theorem 2.7]{popa-shlyakhtenko}). Thus $\mathcal{QN}_{pMp}(Bp)'' = \mathcal{N}_{pMp}(Bp)''$ and $p\mathcal{N}_M(B)''p \subset \mathcal{N}_{pMp}(Bp)''$. Let now $v \in \mathcal{N}_{pMp}(Bp)$. Define $u = v + (1 - p) \in \mathcal{U}(M)$. It is clear that $u \in \mathcal{N}_M(B)$ and $pup = v$. Therefore $\mathcal{N}_{pMp}(Bp)'' \subset p\mathcal{N}_M(B)''p$, which finishes the proof.
\end{proof}

\subsection{Complete metric approximation property}

\begin{df}[Haagerup, \cite{haa}]
A von Neumann algebra $\mathcal{N}$ is said to have the ({\em weak}$^*$) {\it complete bounded approximation property} if there exist a constant $C \geq 1$ and a net of normal finite rank completely bounded maps $\Phi_n : \mathcal{N} \to \mathcal{N}$ such that
\begin{itemize}
\item $\Phi_n(x) \to x$ $\ast$-strongly, for every $x \in \mathcal{N}$;
\item $\limsup _n\|\Phi_n\|_{\cb} \leq C$.
\end{itemize}
The Cowling-Haagerup constant $\Lambda_{\cb}(\mathcal{N})$ is defined as the infimum of the constants $C$ for which a net $(\Phi_n)$ as above exists. Also we say that $\mathcal{N}$ has the ({\em weak}$^*$) {\it complete metric approximation property} (c.m.a.p.) if $\Lambda_{\cb}(\mathcal{N}) = 1$.
\end{df}

\begin{theo}\label{properties}
The following are true.
\begin{enumerate}
\item $\Lambda_{\cb}(p \mathcal{M} p) \leq \Lambda_{\cb}(\mathcal{M})$, for every projection $p \in \mathcal{M}$.
\item If $\mathcal{N} \subset \mathcal{M}$ such that there exists a conditional expectation $E :  \mathcal{M} \to \mathcal{N}$, then $\Lambda_{\cb}(\mathcal{N}) \leq \Lambda_{\cb}(\mathcal{M})$.
\item If $\mathcal{M}$ is amenable then $\Lambda_{\cb}(\mathcal{M}) = 1$.
\item Denote by $\sigma$ the modular automorphism group on $\mathcal{M}$. Then $
\Lambda_{\cb}(\mathcal{M}) = \Lambda_{\cb}(\mathcal{M} \rtimes_\sigma \R)$.
\item If $\mathcal{M}_i$ is amenable for every $i \in I$, then $\Lambda_{\cb}( \ast_{i \in I}\mathcal{M}_i) = 1$.
\end{enumerate}
\end{theo}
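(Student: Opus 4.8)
The plan is to treat the five assertions separately: the first three are elementary transfer/approximation facts, while the last two rest on the noncommutative flow of weights (item 4) and on free-product permanence of the c.m.a.p.\ (item 5), each invoking one external input. For (1) and (2) I would transport an approximating net from the larger algebra by pre- and post-composing with a normal complete contraction. Fix a net $(\Phi_n)$ witnessing $\Lambda_{\cb}(\mathcal{M})$, and set $\Psi_n := p\Phi_n(\cdot)p$ on $p\mathcal{M}p$ for (1), and $\Psi_n := E\circ\Phi_n$ on $\mathcal{N}$ for (2). The compression $x\mapsto pxp$, the conditional expectation $E$, and the inclusions $p\mathcal{M}p\hookrightarrow\mathcal{M}$, $\mathcal{N}\hookrightarrow\mathcal{M}$ are all normal and completely contractive, so each $\Psi_n$ is normal, finite rank, and satisfies $\|\Psi_n\|_{\cb}\leq\|\Phi_n\|_{\cb}$; pointwise $\ast$-strong convergence $\Psi_n\to\id$ follows from $\ast$-strong continuity of these maps on bounded sets. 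Taking infima over nets yields the two inequalities. For (3) I would invoke Connes' identification of amenable von Neumann algebras with semidiscrete ones \cite{connes76}: such $\mathcal{M}$ admits a net of normal unital completely positive finite rank maps converging to $\id$ point-$\ast$-strongly, and a normal u.c.p.\ map has $\cb$-norm $1$, whence $\Lambda_{\cb}(\mathcal{M})=1$.

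For (4) the strategy is to reduce everything to a single lemma: \emph{a crossed product by the amenable group $\R$ does not increase the Cowling--Haagerup constant}, i.e.\ $\Lambda_{\cb}(\mathcal{P}\rtimes_\alpha\R)\leq\Lambda_{\cb}(\mathcal{P})$ for any action $\alpha$. Granting this, apply it to $\alpha=\sigma$ to get $\Lambda_{\cb}(M)=\Lambda_{\cb}(\mathcal{M}\rtimes_\sigma\R)\leq\Lambda_{\cb}(\mathcal{M})$. For the reverse inequality I would apply the same lemma to the dual action $\theta$ of $\R$ on $M$, giving $\Lambda_{\cb}(M\rtimes_\theta\R)\leq\Lambda_{\cb}(M)$; by Takesaki's Duality Theorem \cite{takesaki73} recalled above, $M\rtimes_\theta\R\cong\mathcal{M}\overline{\otimes}\mathbf{B}(L^2(\R))$, and cutting by $1\otimes e$ for a rank-one projection $e$ realizes $\mathcal{M}$ as a corner of this algebra, so (1) yields $\Lambda_{\cb}(\mathcal{M})\leq\Lambda_{\cb}(\mathcal{M}\overline{\otimes}\mathbf{B}(L^2(\R)))=\Lambda_{\cb}(M\rtimes_\theta\R)\leq\Lambda_{\cb}(M)$. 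Combining the two chains gives equality. Note that this route deliberately avoids a conditional expectation $M\to\mathcal{M}$, which does not exist in the type ${\rm III}$ case.

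The lemma itself I would prove by the standard transfer argument for crossed products by amenable groups. Given finite-rank c.b.\ maps $\Phi_n$ on $\mathcal{P}$ with $\limsup_n\|\Phi_n\|_{\cb}\leq C$, one builds approximating maps on $\mathcal{P}\rtimes_\alpha\R$ by combining suitably equivariant (Følner-averaged) versions of the $\Phi_n$ with Herz--Schur multipliers on the crossed product associated to an approximate identity $(\psi_k)$ in the Fourier algebra $A(\R)$ whose multiplier norms tend to $1$; such a net exists precisely because $\R$ is amenable. Amenability of $\R$ is thus used twice: to produce the $A(\R)$-approximate identity bounded by $1$, and to localize the $\R$-direction so that a discretization of its compact support recovers genuine finite rank while keeping the $\cb$-norm close to $C$. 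This simultaneous control of finite rank and $\cb$-norm is the main obstacle and the technical heart of (4).

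For (5) I would first use (3) to note that each amenable $\mathcal{M}_i$ already has the c.m.a.p., and then appeal to the stability of the c.m.a.p.\ under reduced free products taken with respect to the canonical faithful normal states: the reduced free product of a family of von Neumann algebras each carrying a faithful normal state and having $\Lambda_{\cb}=1$ again has $\Lambda_{\cb}=1$. Granting this permanence result, $\Lambda_{\cb}(\ast_{i\in I}\mathcal{M}_i)=1$ is immediate. The whole difficulty of (5) is concentrated in that free-product statement, whose proof rests on Haagerup-type length-reduction estimates for words in the reduced free product, in the same spirit as the radial-multiplier analysis on $\F_n$ of \cite{HSS}; this is the nontrivial external ingredient here.
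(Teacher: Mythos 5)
Your proposal is correct and, in substance, it coincides with the paper's proof: the paper disposes of all five items by citation ((1), (2), (4) to \cite{anan95}, (3) to \cite{connes76}, (5) to \cite{RicardXu}), and what you have written is a reconstruction of the standard arguments behind those citations. Your treatments of (1) and (3) are exactly right, and your reduction of (4) to the lemma that crossed products by amenable locally compact groups do not increase $\Lambda_{\cb}$, followed by Takesaki duality \cite{takesaki73} and the corner trick via (1), is precisely the mechanism by which the results of \cite{anan95} yield the equality; you also correctly identify why no conditional expectation $\mathcal{M} \rtimes_\sigma \R \to \mathcal{M}$ is available in the type ${\rm III}$ case, so that the duality detour is forced.

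Two caveats. First, in (2) your argument uses normality of $E$: the composed maps $E \circ \Phi_n|_{\mathcal{N}}$ are automatically normal and finite rank whatever $E$ is (writing $\Phi_n = \sum_i \omega_i(\cdot)\, m_i$ with $\omega_i \in \mathcal{M}_*$ gives $E \circ \Phi_n|_{\mathcal{N}} = \sum_i \omega_i|_{\mathcal{N}}(\cdot)\, E(m_i)$, and $\omega_i|_{\mathcal{N}} \in \mathcal{N}_*$), but the convergence $E(\Phi_n(x)) \to x$ is only justified when $E$ is $\ast$-strongly (equivalently $\sigma$-weakly) continuous on bounded sets, i.e.\ normal. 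The statement in the paper omits the word ``normal'', but every application in the paper (Theorem B, Corollary C) involves a normal expectation, so this only restricts the statement, not its use. Second, and more substantively, the permanence result you invoke for (5) --- that a reduced free product of von Neumann algebras each with $\Lambda_{\cb}=1$ (with respect to faithful normal states) again has $\Lambda_{\cb}=1$ --- is strictly stronger than what \cite{RicardXu} prove, and it is not available: their theorem concerns free products of \emph{amenable} (injective) von Neumann algebras. The distinction is not cosmetic. The known proof needs, on each factor, finite rank approximants that are unital, completely positive and state-preserving, so that they induce unital completely positive maps on the free product, which are then truncated by word length using the Khintchine-type estimates; amenability supplies such approximants, whereas the c.m.a.p.\ alone does not, and whether the c.m.a.p.\ itself passes to free products is a much harder question. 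Since the hypotheses of (5) give you amenable factors anyway, the repair is immediate: skip the detour through (3) and quote the Ricard--Xu theorem in its actual form.
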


\begin{proof}
$(1), (2), (4)$ follow from \cite{anan95}. The equivalence between semidiscreteness and amenability \cite{connes76} gives $(3)$. Finally $(5)$ is due to \cite{RicardXu}.
\end{proof}

\subsection{Free Araki-Woods factors}\label{freearak}

Recall now the construction of the free Araki-Woods factors due to Shlyakhtenko \cite{shlya97}. Let $H_{\R}$ be a real separable Hilbert space and let $(U_t)$ be an orthogonal representation of $\R$ on $H_{\R}$. Let $H = H_{\R} \otimes_{\R} \C$ be the complexified Hilbert space. Let $J$ be the canonical anti-unitary involution on $H$ defined by:
\begin{equation*}
J(\xi + i \eta) = \xi - i \eta, \forall \xi, \eta \in H_\R.
\end{equation*}
 If $A$ is the infinitesimal generator of $(U_t)$ on $H$, we recall that $j : H_{\R} \to H$ defined by $j(\zeta) = (\frac{2}{A^{-1} + 1})^{1/2}\zeta$ is an isometric embedding of $H_{\R}$ into $H$. Moreover, we have $JAJ = A^{-1}$. Let $K_{\R} = j(H_{\R})$. It is easy to see that $K_\R \cap i K_\R = \{0\}$ and $K_\R + i K_\R$ is dense in $H$. Write $I = J A^{-1/2}$. Then $I$ is a conjugate-linear closed invertible operator on $H$ satisfying $I = I^{-1}$ and $I^*I = A^{-1}$. Such an operator is called an {\it involution} on $H$. Moreover, $K_\R = \{ \xi \in \dom(I) : I \xi = \xi \}$.

We introduce the \emph{full Fock space} of $H$:
\begin{equation*}
\mathcal{F}(H) =\C\Omega \oplus \bigoplus_{n = 1}^{\infty} H^{\otimes n}.
\end{equation*}
The unit vector $\Omega$ is called the \emph{vacuum vector}. For any $\xi \in H$, define the {\it left creation} operator $\ell(\xi) : \mathcal{F}(H) \to \mathcal{F}(H)$
\begin{equation*}
\left\{ 
{\begin{array}{l} \ell(\xi)\Omega = \xi, \\ 
\ell(\xi)(\xi_1 \otimes \cdots \otimes \xi_n) = \xi \otimes \xi_1 \otimes \cdots \otimes \xi_n.
\end{array}} \right.
\end{equation*}
We have $\|\ell(\xi)\|_\infty = \|\xi\|$ and $\ell(\xi)$ is an isometry if $\|\xi\| = 1$. For any $\xi \in H$, we denote by $s(\xi)$ the real part of $\ell(\xi)$ given by
\begin{equation*}
s(\xi) = \frac{\ell(\xi) + \ell(\xi)^*}{2}.
\end{equation*}
The crucial result of Voiculescu \cite{voiculescu92} is that the distribution of the operator $s(\xi)$ with respect to the vacuum vector state $\chi(x) = \langle x\Omega, \Omega\rangle$ is the semicircular law of Wigner supported on the interval $[-\|\xi\|, \|\xi\|]$. 

\begin{df}[Shlyakhtenko, \cite{shlya97}]
Let $(U_t)$ be an orthogonal representation of $\R$ on the real Hilbert space $H_{\R}$. The \emph{free Araki-Woods} von Neumann algebra associated with $(H_\R, U_t)$, denoted by $\Gamma(H_{\R}, U_t)''$, is defined by
\begin{equation*}
\Gamma(H_{\R}, U_t)'' := \{s(\xi) : \xi \in K_{\R}\}''.
\end{equation*}
We will denote by $\Gamma(H_{\R}, U_t)$ the $C^*$-algebra generated by the $s(\xi)$'s for all $\xi \in K_\R$.
\end{df}

The vector state $\chi(x) = \langle x\Omega, \Omega\rangle$ is called the {\it free quasi-free state} and is faithful on $\Gamma(H_\R, U_t)''$. Let $\xi, \eta \in K_\R$ and write $\zeta = \xi + i \eta$. We have
\begin{equation*}
2 s(\xi) + 2 i s(\eta) = \ell(\zeta) + \ell(I \zeta)^*.
\end{equation*}
Thus, $\Gamma(H_\R, U_t)''$ is generated as a von Neumann algebra by the operators of the form $\ell(\zeta) + \ell(I \zeta)^*$ where $\zeta \in \dom(I)$. Note that the modular group $(\sigma_t^\chi)$ of the free quasi-free state $\chi$ is given by $\sigma^{\chi}_{- t} = \Ad(\mathcal{F}(U_t))$, where $\mathcal{F}(U_t) = 1 \oplus \bigoplus_{n \geq 1} U_t^{\otimes n}$. In particular, it satisfies
\begin{equation*}
\sigma_{-t}^{\chi}\left( \ell(\zeta) + \ell(I \zeta)^* \right)  =  \ell(U_t \zeta) + \ell(I U_t \zeta)^*, \forall \zeta \in \dom(I), \forall t \in \R. 
\end{equation*}

The free Araki-Woods factors provided many new examples of full factors of type {\rm III} \cite{{barnett95}, {connes73}, {shlya2004}}. We can summarize the general properties of the free Araki-Woods factors in the following theorem (see also \cite{vaes2004}):

\begin{theo}[Shlyakhtenko, \cite{{shlya2004}, {shlya99}, {shlya98}, {shlya97}}]
Let $(U_t)$ be an orthogonal representation of $\R$ on the real Hilbert space $H_{\R}$ with $\dim H_{\R} \geq 2$. Denote by $\mathcal{M} := \Gamma(H_{\R}, U_t)''$.
\begin{enumerate}
\item $\mathcal{M}$ is a full factor and Connes' invariant $\tau(\mathcal{M})$ is the weakest topology on $\R$ that makes the map $t \mapsto U_t$ $\ast$-strongly continuous.
\item $\mathcal{M}$ is of type ${\rm II_1}$ if and only if $U_t = 1$, for every $t \in \R$.
\item $\mathcal{M}$ is of type ${\rm III_{\lambda}}$ $(0 < \lambda < 1)$ if and only if $(U_t)$ is periodic of period $\frac{2\pi}{|\log \lambda|}$.
\item $\mathcal{M}$ is of type ${\rm III_1}$ in the other cases.
\item The factor $\mathcal{M}$ has almost periodic states if and only if $(U_t)$ is almost periodic.
\end{enumerate}
\end{theo}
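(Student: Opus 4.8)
These five assertions are classical and due to Shlyakhtenko; the plan is to recall how each of them follows from the explicit description of the modular structure of the free quasi-free state $\chi$, namely $\sigma_{-t}^\chi = \Ad(\mathcal{F}(U_t))$ with $\mathcal{F}(U_t) = 1 \oplus \bigoplus_{n \geq 1} U_t^{\otimes n}$. Equivalently $\Delta_\chi^{it} = \mathcal{F}(U_{-t})$ on the full Fock space, so that every type-theoretic question about $\mathcal{M}$ is governed by the spectrum of the infinitesimal generator $A$ of $(U_t)$ and its behaviour under second quantization.

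For the type classification $(2)$--$(4)$ I would compute Connes' spectral invariant $S(\mathcal{M})$. Since $\Delta_\chi^{it} = \mathcal{F}(U_{-t})$ restricts on $H^{\otimes n}$ to $U_{-t}^{\otimes n}$, the eigenvalues of $\Delta_\chi$ are $n$-fold products of eigenvalues of the generator, so $\Sp(\Delta_\chi)$ is the closed multiplicative semigroup generated by $\Sp(A)$. When $U_t = \id$ the generator is trivial, $\Delta_\chi = 1$, so $\chi$ is a trace and $\mathcal{M} = L(\F_{\dim H_\R})$ is of type ${\rm II_1}$ (Voiculescu's free Gaussian functor). When $(U_t)$ is periodic of period $2\pi/|\log\lambda|$ the point spectrum of $A$ is exactly $\lambda^{\Z}$, whence $S(\mathcal{M})\cap\R_+^* = \lambda^{\Z}$ and $\mathcal{M}$ is of type ${\rm III_\lambda}$; in all remaining nontrivial cases the spectrum is not contained in any geometric progression, forcing $S(\mathcal{M}) = \R_+$ and hence type ${\rm III_1}$.

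For $(1)$ I would first establish that $\mathcal{M}$ is full by ruling out nontrivial central sequences: a bounded sequence asymptotically commuting with all the generators $\ell(\zeta)+\ell(I\zeta)^*$ is forced to be asymptotically scalar, using that distinct reduced tensors on the Fock space are mutually orthogonal and do not asymptotically overlap. Granted fullness, $\tau(\mathcal{M})$ is by definition the weakest topology making $t\mapsto\pi(\sigma_t^\chi)$ continuous into $\Out(\mathcal{M})$; since $\sigma_t^\chi$ is implemented by $\mathcal{F}(U_{-t})$, one checks that a net $\sigma_{t_i}^\chi$ converges to $\id$ in $\Out(\mathcal{M})$ precisely when $U_{t_i}\to\id$ $\ast$-strongly. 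The converse direction is just continuity of second quantization, while the direct direction again exploits the asymptotic orthogonality estimates on the Fock space to upgrade outer convergence of the modular automorphisms into $\ast$-strong convergence of the orthogonal representation. Finally, $(5)$ follows from the same formula $\Delta_\chi^{it} = \mathcal{F}(U_{-t})$: the state $\chi$ is almost periodic iff $\Delta_\chi$ is diagonalizable iff $A$ has pure point spectrum iff $(U_t)$ is almost periodic; the nontrivial assertion that the existence of \emph{any} almost periodic state already forces $(U_t)$ almost periodic is obtained from Connes' theory for full factors, identifying $\Sd(\mathcal{M})$ with the subgroup of $\R_+^*$ generated by the eigenvalues of $A$.

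The main obstacle is the fullness statement together with the hard direction of the $\tau$-invariant computation in $(1)$: translating convergence in the quotient $\Out(\mathcal{M})$ back into $\ast$-strong convergence of $(U_t)$ cannot be done by soft means and requires the delicate combinatorial estimates on the full Fock space. These are precisely the computations carried out by Shlyakhtenko in the cited papers, to which we refer for the complete arguments.
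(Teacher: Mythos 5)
You should know at the outset that the paper contains no proof of this statement: it appears in Section 2.5 purely as background, attributed to Shlyakhtenko \cite{{shlya2004}, {shlya99}, {shlya98}, {shlya97}}, and nothing in the paper reconstructs the argument. So your attempt can only be measured against the cited literature, and at the level of strategy it is faithful to it: everything is read off the modular data $\sigma^\chi_{-t} = \Ad(\mathcal{F}(U_t))$, the type classification comes from Connes' invariants, fullness comes from free-product phenomena (Barnett's techniques \cite{barnett95}), and assertion (5) goes through the $\Sd$ invariant for full factors. You also correctly single out fullness and the hard direction of the $\tau(\mathcal{M})$ computation as the genuinely difficult points and defer them to the references, which is exactly the status they have in the paper as well.

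There is, however, one step in your sketch that, as written, does not work and is \emph{not} among the gaps you acknowledge: the computation of $S(\mathcal{M})$. Connes' invariant is $S(\mathcal{M}) = \bigcap_\varphi \Sp(\Delta_\varphi)$, the intersection over \emph{all} faithful normal states, so the spectral computation for the single state $\chi$ only yields the inclusion $S(\mathcal{M}) \subset \Sp(\Delta_\chi)$. Your inference ``the point spectrum of $A$ is exactly $\lambda^{\Z}$, whence $S(\mathcal{M}) \cap \R_+^* = \lambda^{\Z}$'' (and likewise the claim that a non-arithmetic spectrum forces $S(\mathcal{M}) = \R_+$ in case (4), which also carries the ``only if'' direction of (2)) needs the reverse inclusion, and that requires an additional input: either that the centralizer $\mathcal{M}^\chi$ is a factor, or Connes' discrete/continuous decomposition theory applied to $\mathcal{M}$. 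Establishing this — in the almost periodic case, identifying the discrete core as $L(\F_\infty) \overline{\otimes} \mathbf{B}(\ell^2)$ so that the centralizer is a factor — is a substantial portion of \cite{shlya97} and cannot be treated as a routine consequence of the formula $\Delta_\chi^{it} = \mathcal{F}(U_{-t})$. With that step flagged (alongside the deferrals you do flag), your text is a correct roadmap to Shlyakhtenko's proof rather than a proof; given that the paper itself offers none, that is a reasonable outcome, but the $S$-invariant step is a genuine gap, not a detail.
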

Shlyakhtenko moreover showed \cite{shlya2004} that every free Araki-Woods factor $\mathcal{M} = \Gamma(H_\R, U_t)''$ is {\em generalized solid} in the sense of \cite{{ozawa2003}, {VV}}: for every diffuse subalgebra $A \subset \mathcal{M}$ for which there exists a faithful normal conditional expectation $E : \mathcal{M} \to A$, the relative commutant $A' \cap \mathcal{M}$ is amenable. The first-named author showed \cite{houdayer5} that every type ${\rm III_1}$ free Araki-Woods factor has trivial bicentralizer \cite{haagerup84}.

\section{Approximation properties: proof of Theorem A}\label{theoA}

There are not so many ways to produce concrete examples of completely
bounded maps on free Araki-Woods von Neumann algebras. When $(U_t)$ is
trivial, one recovers the free group algebras, and harmonic analysis
joins the game with Fourier multipliers. On $\mathbf F_\infty$,
multipliers that only depend on the length are said to be {\em radial}.  Haagerup and
Szwarc obtained a very nice characterization of them.  Their
approach was based on a one-to-one correspondence between Fourier
multipliers on a group $G$ and Schur multipliers on
$\mathbf{B}(\ell^2(G))$ established by Gilbert. Their idea was to look
for a description of Schur multipliers obtained this way and they
managed to do so for more general multipliers related to homogeneous
trees. The key point is to find a shift algebra that is preserved by
those Schur multipliers. This technique or some variations have
operated with success on other groups \cite{HSS, Wys}.

The free semicircular random variables and the canonical generators of $\mathbf
F_\infty$ have different shape but there is a natural {\em length} for both of them 
which is related to freeness. This notion still makes sense after the
quasi-free deformation and one can hope to have nice multipliers. We follow the scheme of Haagerup and Szwarc, but Gilbert's
theorem is missing here (there is no easy way to extend multipliers).
Nevertheless, we obtain exactly the same characterization and the
parallel with Schur multipliers is very striking. This is the first
step towards the approximation property that originates from the paper
\cite{haa}, where it was shown that the projection onto tensors of a
fixed given length is bounded. Haagerup's ideas turned out to be 
efficient to prove various approximation properties in relationship
with Khintchine type inequalities (see \cite{Buc, Nou}). The second
step consists in using functorial completely positive maps called second 
quantizations (see \cite{shlya97, BSK}). The new point is that we show 
that the second quantization is valid under a milder assumption than the one
in \cite{shlya97}.

\subsection{Preliminaries}
The $C^*$-algebra $\Gamma(H_{\R}, U_t)$ is generated by real parts of some
left creation operators. Since we look for completely bounded maps on free
Araki-Woods algebras, it seems natural to try to find them as
restrictions on some larger algebra.  This is why we are interested in
basic properties of the algebra generated by creation operators.

 To fix notation, let $H$ be a complex Hilbert space and $\mathcal{F}(H)$
the corresponding full Fock space. We write $\Ch$ for the $C^*$-algebra
generated by all the left creation operators 
$\Ch = \langle \ell(e) : e \in H \rangle$. It is easy to verify that for any $e,f \in H$:
$$\ell(f)^*\ell(e)= \langle f,e \rangle.$$ 
In fact, this property
completely characterizes the algebra $\Ch$. Indeed, in the sense of
\cite{Pim}, $\Ch$ is a Toeplitz algebra and satisfies the following universal
property (see \cite[Theorem 3.4]{Pim}): if $u: H \to \mathbf{B}(K)$ is a linear map (for some Hilbert space $K$) so that $u^*(f)u(e)= \langle
f,e \rangle$, then there is a unique $*$-homomorphism $\pi : \Ch \to \mathbf{B}(K)$ so that $\pi(\ell(e))=u(e)$. 

When $H = \C$, we will
simply denote $\mathcal{T}(\C)$ by $\mathcal{T}$: this is the
universal $C^*$-algebra generated by a shift operator $S$ (a
nonunitary isometry).  We will need the following very elementary
estimates about creation operators:
\begin{lem}\label{majf}
For orthonormal families $(e_i), (f_i)$ in $H$ and $\alpha_i\in \C$ with $|\alpha_i| \leq 1$, we have
$$\left\|\frac 1 n\sum_{i=1}^n \alpha_i \ell(e_i)\ell(f_i)^{*}\right\|_\infty \leq \frac 1 n \qquad \textrm{  and }\qquad  \left\|\frac 1 n\sum_{i=1}^n \alpha_i \ell(e_i) \ell(f_i)\right\|_\infty \leq\frac 1 {\sqrt{n}}.$$
\end{lem}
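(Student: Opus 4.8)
The two estimates in Lemma \ref{majf} are of Khintchine/row-column type, so the plan is to estimate each operator by bounding its action on the full Fock space $\mathcal{F}(H)$, exploiting the relation $\ell(f)^*\ell(e)=\langle f,e\rangle$ together with the orthonormality of the families $(e_i),(f_i)$. I would treat the two sums separately.

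For the first sum $T=\frac1n\sum_i\alpha_i\,\ell(e_i)\ell(f_i)^*$, the key observation is that $\ell(e_i)\ell(f_i)^*$ is (up to the scalar $\alpha_i$) a rank-type ``diagonal'' shift: $\ell(f_i)^*$ first strips off a leading tensor factor, testing it against $f_i$, and $\ell(e_i)$ prepends $e_i$. The plan is to compute $T^*T$ and use that $\ell(f_j)^*\ell(f_i)^*{}^*=\ell(f_j)^*\ell(f_i)$ collapses via $\ell(e_j)^*\ell(e_i)=\langle e_j,e_i\rangle=\delta_{ij}$ so that the cross terms vanish by orthonormality of $(e_i)$. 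This should reduce $T^*T$ to $\frac1{n^2}\sum_i|\alpha_i|^2\,\ell(f_i)\ell(f_i)^*$, a sum of pairwise orthogonal projections (orthogonal because the ranges of $\ell(f_i)$ are mutually orthogonal when the $f_i$ are orthonormal), whose norm is therefore $\le\frac1{n^2}$, giving $\|T\|_\infty\le\frac1n$.

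For the second sum $R=\frac1n\sum_i\alpha_i\,\ell(e_i)\ell(f_i)$, the relevant computation is again $R^*R=\frac1{n^2}\sum_{i,j}\overline{\alpha_i}\alpha_j\,\ell(f_i)^*\ell(e_i)^*\ell(e_j)\ell(f_j)=\frac1{n^2}\sum_{i,j}\overline{\alpha_i}\alpha_j\,\delta_{ij}\,\ell(f_i)^*\ell(f_j)$, where I have used $\ell(e_i)^*\ell(e_j)=\langle e_i,e_j\rangle=\delta_{ij}$. This collapses to $R^*R=\frac1{n^2}\sum_i|\alpha_i|^2\,\ell(f_i)^*\ell(f_i)=\frac1{n^2}\sum_i|\alpha_i|^2\cdot 1$, which has norm at most $\frac1{n^2}\cdot n=\frac1n$, i.e.\ $\|R\|_\infty\le\frac1{\sqrt n}$. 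The asymmetry between the two bounds (one is $\frac1n$, the other $\frac1{\sqrt n}$) is explained precisely by this: in the first sum orthonormality of $(e_i)$ kills cross terms in $T^*T$ \emph{and} orthonormality of $(f_i)$ makes the surviving diagonal terms mutually orthogonal projections, whereas in the second sum only the diagonal survives but each diagonal term is the full identity $\ell(f_i)^*\ell(f_i)=1$, so the $n$ terms add up rather than remaining bounded.

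The main subtlety to get right is the vanishing of the cross terms and the correct form of the surviving diagonal: one must apply the Toeplitz relation $\ell(f)^*\ell(e)=\langle f,e\rangle$ in the right order and verify that in the first estimate the ranges of the $\ell(f_i)$ are genuinely orthogonal (so that $\sum_i|\alpha_i|^2\ell(f_i)\ell(f_i)^*$ is a norm-$\le 1$ operator after the $\frac1{n^2}$ factor), while in the second estimate it is the adjoint ordering $\ell(e_i)^*\ell(e_j)$ that produces the Kronecker delta. I would double-check each collapse by hand since a misplaced adjoint would swap the two bounds. No deeper obstacle is expected; this is a clean computation once the algebraic relations are arranged correctly.
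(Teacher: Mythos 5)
Your proposal is correct and follows essentially the same argument as the paper: compute the relevant product of the operator with its adjoint, collapse the cross terms via the Toeplitz relation $\ell(f)^*\ell(e)=\langle f,e\rangle$, and bound the surviving diagonal by a sum of mutually orthogonal projections (first estimate) or of scalars (second estimate). The only cosmetic difference is that for the first sum you compute $T^*T$ where the paper computes $TT^*$, which simply swaps the roles of $(e_i)$ and $(f_i)$ in killing the cross terms versus controlling the diagonal.
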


\begin{proof}
Let $(e_i), (f_i)$ be orthonormal families in $H$ and $\alpha_i\in \C$ with $|\alpha_i| \leq 1$. The first inequality follows from
\begin{eqnarray*}
\left( \frac 1 n \sum_{i=1}^n \alpha_i \ell(e_i)\ell(f_i)^{*} \right)\left( \frac 1 n \sum_{i=1}^n \alpha_i \ell(e_i)\ell(f_i)^{*} \right)^* & = & \frac{1}{n^2} \sum_{i=1}^n |\alpha_i|^2 \ell(e_i)\ell(e_i)^{*} \\
& \leq & \frac{1}{n^2} \sum_{i = 1}^n  \ell(e_i)\ell(e_i)^{*} \\
& \leq & \frac{1}{n^2}.
\end{eqnarray*}
The second inequality follows from
\begin{eqnarray*}
\left( \frac 1 n \sum_{i=1}^n \alpha_i \ell(e_i)\ell(f_i) \right)^* \left( \frac 1 n \sum_{i=1}^n \alpha_i \ell(e_i)\ell(f_i) \right) & = & \frac{1}{n^2} \sum_{i=1}^n |\alpha_i|^2 \ell(f_i)^*\ell(f_i) \\
& \leq & \frac{1}{n^2} \sum_{i = 1}^n  \ell(f_i)^*\ell(f_i) \\
& = & \frac{1}{n}.
\end{eqnarray*}
\end{proof}

We come back to free Araki-Woods algebras as in \ref{freearak} with
the same notation: $\Gamma(H_{\R}, U_t)=\langle s(\xi) : \xi \in K_{
\R}\rangle$ is the $C^*$-algebra generated by the $s(\xi)$'s for all $\xi \in
K_\R$, and $\Gamma(H_{\R}, U_t)''$ is the corresponding von Neumann algebra.
Given any vector $e$ in $K_\R + i K_\R$, we will simply write $\overline e$ for $I(e)$ as $I(h+ik)=h-ik$, for $h,k \in K_\R$.

 The vacuum vector $\Omega$ is separating and cyclic for $\G$. Consequently any
$x \in \G$ is uniquely determined by $\xi= x\Omega\in \mathcal{F}(H)$, so we will write $x = W(\xi)$. Note that for $\xi \in K_\R$, we recover the semicircular random variables $W(\xi) = 2s(\xi)$ generating $\G$. It readily yields $W(e) = \ell(e) + \ell(\overline e)^*$, for every $e \in K_\R + i K_\R$.

 Given any vectors $e_{k}$ belonging to $K_\R + i
 K_\R$, it is easy to check that $e_1\otimes \cdots \otimes e_n$ lies in $\GG
 \Omega$. Moreover we have a nice description of $W(e_1\otimes \cdots \otimes e_n)$
 in terms of the $\ell(e_{k})$'s called the {\em Wick
 formula}. Since it plays a crucial role in our arguments, we state it as a lemma.
 
\begin{lem}[Wick formula]\label{Wick} For any $(e_{i})_{i \in \N}$ in $K_\R + i
 K_\R$ and any $n\geq 0$:
$$W(e_1\otimes \cdots\otimes e_n)= \sum_{k = 0}^n
\ell(e_{1}) \cdots \ell(e_{k})\ell(\overline e_{{k+1}})^* \cdots \ell(\overline
e_{n})^*.$$ 
\end{lem}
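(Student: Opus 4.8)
The plan is to argue by induction on $n$, establishing simultaneously that the right-hand side, which I abbreviate as $T(e_1,\dots,e_n)$, is a genuine element of $\mathcal{M}=\Gamma(H_\R,U_t)''$ and that it sends $\Omega$ to $e_1\otimes\cdots\otimes e_n$. Since $\Omega$ is separating for $\mathcal{M}$, these two facts together pin down $T(e_1,\dots,e_n)$ as the unique operator $W(e_1\otimes\cdots\otimes e_n)$, so no separate existence statement is needed. The base cases $n=0$ (giving the identity operator) and $n=1$ (giving $W(e_1)=\ell(e_1)+\ell(\overline e_1)^*$) are immediate from the definitions.

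The heart of the matter is a recursion obtained by writing $W(e_1)=\ell(e_1)+\ell(\overline e_1)^*$ and multiplying $T(e_2,\dots,e_n)$ on the left. Multiplication by $\ell(e_1)$ simply prepends a creation operator, producing exactly the summands of $T(e_1,\dots,e_n)$ that carry at least one creation operator; the only summand it misses is the pure annihilation word $\ell(\overline e_1)^*\cdots\ell(\overline e_n)^*$. Multiplication by $\ell(\overline e_1)^*$ is where the Toeplitz relation $\ell(f)^*\ell(e)=\langle f,e\rangle$ enters: applied to the pure annihilation term of $T(e_2,\dots,e_n)$ it reproduces precisely the missing word, while applied to every term carrying a leading $\ell(e_2)$ it contracts to the scalar $\langle\overline e_1,e_2\rangle$ and leaves exactly $T(e_3,\dots,e_n)$. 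Adding the two contributions, the stray pure-annihilation words cancel and I obtain the clean operator identity
\begin{equation*}
T(e_1,\dots,e_n)=W(e_1)\,T(e_2,\dots,e_n)-\langle\overline e_1,e_2\rangle\,T(e_3,\dots,e_n),
\end{equation*}
valid for $n\geq 2$ as bounded operators on $\mathcal{F}(H)$.

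With this recursion in hand the induction closes at once. By the inductive hypothesis both $T(e_2,\dots,e_n)$ and $T(e_3,\dots,e_n)$ lie in $\mathcal{M}$, and since $W(e_1)\in\mathcal{M}$ the identity exhibits $T(e_1,\dots,e_n)$ as an element of $\mathcal{M}$. For the vacuum image I argue separately and directly: every summand other than the all-creation one ends in an annihilation operator and hence kills $\Omega$, since $\ell(\overline e_n)^*\Omega=0$, so that $T(e_1,\dots,e_n)\Omega=\ell(e_1)\cdots\ell(e_n)\Omega=e_1\otimes\cdots\otimes e_n$. As $\Omega$ is separating, the two properties force $T(e_1,\dots,e_n)=W(e_1\otimes\cdots\otimes e_n)$, which is the asserted formula.

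The only genuinely delicate point is the membership $T(e_1,\dots,e_n)\in\mathcal{M}$: a priori $T(e_1,\dots,e_n)$ is merely a word in creation and annihilation operators on Fock space, and it is not obvious that such a word belongs to the von Neumann algebra generated by the fields $W(e)$. The recursion is exactly what resolves this, rewriting the word as a noncommutative polynomial in the generators $W(e_i)$ with scalar contraction coefficients. I would take care only to set up the reindexing of the creation block cleanly and to handle the pure-annihilation ($k=0$) term correctly, since that is where the two halves of the computation must cancel; the remaining manipulations are routine.
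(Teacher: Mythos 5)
Your proof is correct and is essentially the paper's own argument: the same induction on $n$, driven by the same recursion $T(e_1,\dots,e_n)=W(e_1)\,T(e_2,\dots,e_n)-\langle \overline e_1,e_2\rangle\,T(e_3,\dots,e_n)$ coming from the Toeplitz relation $\ell(f)^*\ell(e)=\langle f,e\rangle$, and closed by the separating property of $\Omega$; the paper merely runs it in mirror image, first deriving this recursion for the $W$'s by evaluating $W(e_0)W(e_1\otimes\cdots\otimes e_n)$ on the vacuum and then expanding both sides via the induction hypothesis. One phrase worth correcting: in your derivation nothing actually ``cancels''; the word $\ell(\overline e_1)^*\ell(\overline e_2)^*\cdots\ell(\overline e_n)^*$ produced from the pure-annihilation term is precisely the missing $k=0$ summand that completes $T(e_1,\dots,e_n)$, exactly as you state a line earlier.
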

\begin{proof}
We prove it by induction on $n$. For $n = 0, 1$, we have $W(\Omega)=1$ 
and we observed that $W(e_i)=\ell(e_i) + \ell(\overline e_i)^*$.

Next, for $e_{0}\in K_\R+iK_\R$, we have 
\begin{eqnarray*}
W(e_{0})W(e_1\otimes \cdots \otimes e_n)\Omega&=&W(e_{0})(e_1\otimes \cdots \otimes e_n)
\\ &=& (\ell(e_{0})+\ell(\overline e_{0})^*)e_1\otimes \cdots \otimes e_n\\ &=& e_{0}\otimes e_1\otimes \cdots \otimes e_n + 
\langle \overline e_{0},e_{1}\rangle e_2\otimes \cdots \otimes e_n.
\end{eqnarray*}
Hence $$W(e_0\otimes \cdots \otimes e_n)=W(e_{0})W(e_1\otimes \cdots \otimes e_n) - \langle \overline e_{0},e_{1}\rangle W(e_2\otimes \cdots \otimes e_n),$$
but using the assumption for $n$ and $n-1$  and the commutation relations
$$\ell(\overline e_{0})^*W(e_1\otimes \cdots \otimes e_n)=\langle \overline
e_{0},e_{1}\rangle W(e_2\otimes \cdots \otimes e_n)+\ell(\overline
e_{0})^*\ell(\overline e_{1})^* \cdots \ell(\overline e_{n})^*.$$ Finally
$\ell(e_{0})W(e_1\otimes \cdots \otimes e_n)$ gives the first $n$ terms in
the Wick formula for order $n+1$.
\end{proof}

This formula expresses $W(e_1\otimes \cdots \otimes e_n)$ as an element of $\Ch$
and has many consequences such as Khintchine type inequalities in
\cite{Buc, Nou, BSK} for instance.  We let $$\mathcal{W}=\vect
\left\{W(e_1\otimes \cdots\otimes e_n) : n\geq 0,
e_{k} \in K_\R + i K_\R \right\}.$$ 
It is a dense $\ast$-subalgebra of $\GG$.

We will use the notion of completely bounded maps (see \cite{Pis}). We will not need 
very much beyond definitions and the fact that bounded 
functionals are automatically completely bounded (with the same norm).

\subsection{Radial multipliers}
The construction of our {\it radial multipliers} relies on some
functionals on $\mathcal{T}$. Let $\varphi: \N \to \C$ be a function. 
The {\em radial functional} $\gamma$ associated to $\varphi$ 
is defined on  $\vect \{S^iS ^{*j}\}\subset
\mathcal{T}$ by $\gamma (S^iS^{*j})=\varphi(i+j)$.

 The $C^*$-algebra $\mathcal T$ admits very few irreducible representations (the identity and its characters). It is thus possible to 
compute exactly the norm of such radial linear forms,
see \cite[Proposition 1.8 and Theorem
1.3]{HSS} and \cite{Wys}:
\begin{prop}\label{calcul}
The functional $\gamma$ extends to a bounded map on $\mathcal{T}$ if and only if 
$B = [\varphi(i+j) - \varphi(i+j+2)]_{i,j \geq 0}$
is a trace-class operator. If this is the case, then there are constants
$c_1,c_2 \in \C$ and a unique $\psi: \N \longrightarrow\C$ such that
$$\forall n \in \N, \varphi(n) = c_1 + c_2(-1)^n + \psi(n),
 \mbox{ and } \lim_n \psi(n)=0.$$ 
Moreover
$$\|\gamma\|_{\mathcal{T}^*}= |c_1| + |c_2| + \|B\|_1,$$
where $\|B\|_1$ is the trace norm of $B$.
\end{prop}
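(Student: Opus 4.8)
The plan is to exploit the extension $0 \to \mathcal{K} \to \mathcal{T} \to C(\mathbf{T}) \to 0$, where $\mathcal{K}$ denotes the compact operators on $\ell^2(\N)$ (on which $S$ acts as the shift $\delta_n \mapsto \delta_{n+1}$) and the quotient sends $S$ to the generating unitary $z$ of $C(\mathbf{T})$. The one-dimensional representations $\rho_z : S \mapsto z$ factor through this quotient, and a direct check gives $\rho_z(S^iS^{*j}) = z^{i-j}$; in particular $\rho_1(S^iS^{*j}) = 1$ and $\rho_{-1}(S^iS^{*j}) = (-1)^{i+j}$, so the radial functionals attached to $\varphi \equiv 1$ and to $\varphi(n) = (-1)^n$ are exactly the characters $\rho_1$ and $\rho_{-1}$. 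The key algebraic identity is
\[
S^iS^{*j} - S^{i+1}S^{*(j+1)} = S^i(1 - SS^*)S^{*j} = S^iP_0S^{*j} = e_{ij},
\]
where $P_0 = 1 - SS^*$ is the projection onto $\C\delta_0$ and $e_{ij}$ is the matrix unit $\delta_j \mapsto \delta_i$. Applying $\gamma$ yields $\gamma(e_{ij}) = \varphi(i+j) - \varphi(i+j+2) = B_{i,j}$, which is what ties the restriction of $\gamma$ to $\mathcal{K}$ to the Hankel matrix $B$.

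First I would settle the ``only if'' direction together with the appearance of $\|B\|_1$. If $\gamma$ is bounded on $\mathcal{T}$, its restriction to the ideal $\mathcal{K}$ is bounded with $\|\gamma|_{\mathcal{K}}\| \le \|\gamma\|$. Since the dual of $\mathcal{K}$ is the space of trace-class operators via $T \mapsto \Tr(T\,\cdot\,)$, there is a trace-class $T$ implementing $\gamma|_{\mathcal{K}}$, and $\Tr(T e_{ij}) = B_{i,j}$ forces $T = B$ (as $B$ is symmetric). Hence $B$ is trace-class with $\|\gamma|_{\mathcal{K}}\| = \|B\|_1$, and in particular $\|\gamma\| \ge \|B\|_1$.

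For the converse and the decomposition, I would assume $B = [a_{i+j}]$ with $a_n = \varphi(n) - \varphi(n+2)$ trace-class. The crucial point is that $\Tr(B) = \sum_k a_{2k}$ and $\Tr(BS^*) = \sum_k a_{2k+1}$ (the latter using that $BS^*$ is again trace-class) converge absolutely, so $\varphi(2m) = \varphi(0) - \sum_{k<m} a_{2k}$ and $\varphi(2m+1)$ converge to limits $L_e, L_o$. Setting $c_1 = (L_e+L_o)/2$, $c_2 = (L_e-L_o)/2$ and $\psi(n) = \varphi(n) - c_1 - c_2(-1)^n$ gives the unique decomposition with $\psi \to 0$ and the same matrix $B$. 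A telescoping computation then yields $\Tr(BS^iS^{*j}) = \sum_{k\ge 0} a_{i+j+2k} = \psi(i+j)$, so $\gamma = c_1\rho_1 + c_2\rho_{-1} + \Tr(B\,\cdot\,)$ as functionals on $\mathcal{T}$; this proves boundedness and the upper bound $\|\gamma\| \le |c_1| + |c_2| + \|B\|_1$.

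The main obstacle is the matching lower bound, for which I would decouple the three pieces spatially. Given $\epsilon > 0$, choose a finite-rank $y$ supported on $\{\delta_0,\dots,\delta_{m-1}\}$ with $\|y\| \le 1$ and $\Tr(By) \ge \|B\|_1 - \epsilon$, together with a trigonometric polynomial $p$ of $\|p\|_\infty \le 1$ whose unimodular values satisfy $c_1 p(1) = |c_1|$ and $c_2 p(-1) = |c_2|$ (possible since $1 \ne -1$), lifted to some $x' \in \mathcal{T}$ of norm $\le 1+\epsilon$. With $q_m$ the projection onto the first $m$ coordinates, the element $x = y + (1-q_m)x'(1-q_m)$ is block-diagonal for $\ell^2(\{<m\}) \oplus \ell^2(\{\ge m\})$, hence $\|x\| \le 1+\epsilon$. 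Since the $\rho_{\pm 1}$ annihilate compacts and are unchanged under compression by $1-q_m$, while $|\Tr(B(1-q_m)x'(1-q_m))| \le \|(1-q_m)B(1-q_m)\|_1 \to 0$ as $m \to \infty$, one obtains $\gamma(x) \to \|B\|_1 + |c_1| + |c_2|$, giving $\|\gamma\| \ge |c_1| + |c_2| + \|B\|_1$. The delicate inputs here, namely the finite-rank optimizer for $\|B\|_1$ and the simultaneous two-point interpolation by a norm-one symbol, are precisely where I would lean on the arguments of \cite{HSS} and \cite{Wys}.
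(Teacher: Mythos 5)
Your proof is correct, and it is worth noting that the paper itself contains no proof of Proposition \ref{calcul}: the result is quoted there from \cite{HSS} (Proposition 1.8 and Theorem 1.3) and \cite{Wys}, so your argument is a genuinely self-contained substitute rather than a reproduction. Your route is the natural one for the Toeplitz algebra and checks out line by line: the Coburn extension $0 \to \mathcal{K} \to \mathcal{T} \to C(\mathbf{T}) \to 0$; the identity $S^iS^{*j} - S^{i+1}S^{*(j+1)} = S^i(1-SS^*)S^{*j} = e_{ij}$, which together with the isometric duality between $\mathcal{K}^*$ and the trace-class operators identifies $\gamma|_{\mathcal{K}}$ with $\Tr(B\,\cdot\,)$ (your ``$T=B$'' step is right: $\Tr(Te_{ij}) = T_{ji}$, and $B$ is symmetric since it is Hankel); the absolute summability of the diagonals of $B$ and of $BS^*$, which produces the even/odd limits, hence $c_1, c_2, \psi$ and their uniqueness; the telescoping identity $\Tr(BS^iS^{*j}) = \sum_{k \geq 0} a_{i+j+2k} = \psi(i+j)$, which yields $\gamma = c_1\rho_1 + c_2\rho_{-1} + \Tr(B\,\cdot\,)$ on $\operatorname{span}\{S^iS^{*j}\}$ (a dense $\ast$-subalgebra, since $S^*S = 1$ reduces every word in $S, S^*$ to this form, so the bounded extension is unique); and the block-diagonal element $x = y \oplus (1-q_m)x'(1-q_m)$ for the matching lower bound, where the three terms decouple exactly as you say. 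Two small repairs. First, order the quantifiers in the lower bound: fix a finite-rank near-optimizer $y_0$ (rotated by a phase so that $\Tr(By_0) \geq \|B\|_1 - \epsilon/2$ is real), then choose $m$ so large that simultaneously $\Tr(Bq_my_0q_m) \geq \|B\|_1 - \epsilon$ and $\|(1-q_m)B(1-q_m)\|_1 \leq \epsilon$, and set $y = q_my_0q_m$; as written you choose $y$ inside the first $m$ coordinates before letting $m \to \infty$. Second, the two inputs you flag as delicate and defer to \cite{HSS} and \cite{Wys} are in fact elementary and need no outsourcing: the finite-rank optimizer comes from truncating the singular value decomposition of $B$, and for the two-point interpolation one can take explicitly $p(z) = \omega_1(2+z+z^{-1})/4 + \omega_2(2-z-z^{-1})/4$ with unimodular $\omega_1, \omega_2$ satisfying $c_1\omega_1 = |c_1|$ and $c_2\omega_2 = |c_2|$; since $2 \pm (z + z^{-1}) = 2(1 \pm \cos\theta) \geq 0$ and the two coefficients sum to $1$, one gets $\|p\|_\infty \leq 1$, $p(1) = \omega_1$, $p(-1) = \omega_2$, and a lift $x' \in \mathcal{T}$ of norm at most $1+\epsilon$ exists by the standard property of quotient maps. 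With these trims your argument makes the proposition fully self-contained, whereas the cited sources prove it as a special case of heavier machinery for radial multipliers on homogeneous trees.
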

We say that $\gamma$ is the {\em radial} functional associated to $\varphi$. 
The definition of  multipliers on $\G$ follows the same scheme. 
Define $\m_\varphi$ on $\mathcal{W}$ by
$$\m_\varphi(W(e_1\otimes \cdots \otimes e_n))=\varphi(n) W(e_1\otimes \cdots \otimes e_n).$$

\begin{lem}\label{norext}
Let $\varphi: \N \to \C$ be any function. If $\m_\varphi$ can
be extended to a completely contractive map on $\GG$, then there is a
unique normal completely contractive extension of $\m_\varphi$ from $\G$ to $\G$.
\end{lem}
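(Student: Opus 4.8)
The plan is to extend the densely-defined map $\m_\varphi$ from the weak-$*$ dense $\ast$-subalgebra $\mathcal{W} \subset \GG$ to a normal map on the von Neumann algebra $\G$, and to show this extension is unique and remains completely contractive. First I would record the standard mechanism: a completely bounded (here, completely contractive) map on the $C^*$-algebra $\GG$ does not automatically extend normally to its weak-$*$ closure, so some structure specific to the Fock space must be exploited. The natural tool is the free quasi-free state $\chi$, which is faithful and normal on $\G$, together with the fact that $\Omega$ is cyclic and separating for $\G$. The key observation is that $\m_\varphi$ acts diagonally on tensor powers at the $L^2$-level: if we identify $x = W(\xi)$ with $\xi = x\Omega \in \mathcal{F}(H)$, then $\m_\varphi$ is implemented at the Hilbert-space level by the bounded diagonal operator $m_\varphi = \bigoplus_{n \geq 0} \varphi(n)\,\Id_{H^{\otimes n}}$ on $\mathcal{F}(H)$, provided $\varphi$ is bounded (which follows from Proposition \ref{calcul} once $\m_\varphi$ is completely contractive, since $c_1,c_2$ and $\psi$ are then finite).

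Concretely, I would first verify that the assumed completely contractive map on $\GG$ is $\chi$-preserving, i.e.\ $\chi \circ \m_\varphi = \varphi(0)\,\chi$ or, after normalizing $\varphi(0) = 1$, that it preserves $\chi$; this is immediate from the definition since $\m_\varphi$ fixes the degree-zero component $\C\Omega$. A $\chi$-preserving completely bounded map on the $C^*$-algebra automatically extends to a bounded operator $T$ on $L^2(\G, \chi) = \mathcal{F}(H)$ via $T(x\Omega) = \m_\varphi(x)\Omega$, because $\chi$ is a vector state and $\|\m_\varphi(x)\Omega\| = \|\m_\varphi(x)\|_2 \leq \|\m_\varphi\|_\infty \|x\|_\infty$ stays controlled on the dense domain. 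I would then identify $T$ with the diagonal operator $m_\varphi$ above and observe that $m_\varphi$ commutes with the modular structure in the appropriate sense, so that the standard theory of normal extensions for state-preserving maps (as in the tracial case, adapted via the KMS condition for $\chi$) applies.

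The existence of the normal extension would then follow from the general principle that a unital completely positive — or here completely contractive, $\chi$-preserving — map on a dense $C^*$-subalgebra that is bounded on $L^2$ and compatible with the modular automorphism group $(\sigma_t^\chi)$ extends normally to the von Neumann algebra, with the same complete bound. Uniqueness is the easier half: any two normal extensions agree on the weak-$*$ dense set $\mathcal{W}$, hence agree everywhere by normality and Kaplansky density, since bounded weak-$*$ continuity is determined on a weak-$*$ dense bounded subset. I would assemble the complete contractivity of the extension from the complete contractivity on $\GG$ together with lower semicontinuity of the cb-norm under taking weak-$*$ limits of the approximants.

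The main obstacle I anticipate is the passage from \emph{bounded on $L^2$} to \emph{normal on the von Neumann algebra} while preserving the complete contraction bound, rather than merely some bound. The subtlety is that a $C^*$-algebraic completely contractive map need not be weak-$*$ continuous a priori, and the free quasi-free state is not a trace (we are in the genuinely type ${\rm III}$ setting when $(U_t)$ is nontrivial), so the clean tracial argument for normal extension is unavailable. The fix is to use that $\m_\varphi$ commutes with $\sigma_t^\chi = \Ad(\mathcal{F}(U_{-t}))$, since $\mathcal{F}(U_t)$ is itself diagonal on tensor powers and hence commutes with $m_\varphi$; this commutation lets one transport the extension problem into the centralizer picture or, equivalently, invoke the normality criterion for multipliers that intertwine the modular flow. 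Once normality is secured, everything else is formal, and the resulting normal extension is the unique completely contractive normal map agreeing with $\m_\varphi$ on $\mathcal{W}$.
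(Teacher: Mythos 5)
Your outline does not actually prove the one thing the lemma asserts, namely the \emph{existence} of the normal extension: you delegate it to a ``general principle that a unital completely positive --- or here completely contractive, $\chi$-preserving --- map on a dense $C^*$-subalgebra that is bounded on $L^2$ and compatible with the modular automorphism group extends normally.'' No such theorem exists in the generality you need. The results of that flavour (Takesaki's theorem on conditional expectations, Accardi--Cecchini's theorem producing $\chi$-adjoints of state-preserving maps commuting with the modular flow) apply to completely \emph{positive} maps already defined on the von Neumann algebra, whereas $\m_\varphi$ is merely completely contractive (for signed or complex $\varphi$ it is not positive) and is only given on the norm-dense $C^*$-subalgebra $\GG$; ``the normality criterion for multipliers that intertwine the modular flow'' is not a citable statement, so the step you yourself flag as the main obstacle remains unproved. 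There is also a local error: your justification that $T(x\Omega)=\m_\varphi(x)\Omega$ is bounded on $L^2$ rests on $\|\m_\varphi(x)\Omega\|\leq\|\m_\varphi(x)\|_\infty\leq\|x\|_\infty$, which controls the image by the \emph{operator} norm of $x$, not by $\|x\Omega\|_2$, and hence proves nothing about $L^2$-boundedness. The correct route is the diagonal form $m_\varphi=\bigoplus_{n\geq 0}\varphi(n)\Id_{H^{\otimes n}}$ together with $\sup_n|\varphi(n)|\leq 1$, and the latter follows at once by applying contractivity to any nonzero $W(\xi)$, $\xi\in H^{\otimes n}$ --- not from Proposition \ref{calcul}, whose relevance to multipliers on $\GG$ is only established afterwards (Theorem \ref{main}), so invoking it here is a circularity risk.

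The missing idea is the paper's duality argument on the predual, and the key fact your outline never isolates is that $\m_\varphi$ is its own transpose with respect to the bilinear form $(x,y)\mapsto\chi(xy)$: for $x,y\in\mathcal{W}$ one has $\chi(\m_\varphi(x)y)=\chi(x\,\m_\varphi(y))$, since by diagonality both sides equal $\sum_n\varphi(n)\langle(y\Omega)_n,(x^*\Omega)_n\rangle$, where $(\cdot)_n$ denotes the component in $H^{\otimes n}$. Consequently, under the embedding $j:\G\to\G_*$, $j(x)(y)=\chi(xy)$, one gets $j(x)\circ\m_\varphi=j(\m_\varphi(x))$, and contractivity of $\m_\varphi$ on $\GG$ gives $\|j(\m_\varphi(x))\|_{\G_*}\leq\|j(x)\|_{\G_*}$, similarly at all matrix levels. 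Since $j(\mathcal{W})$ is norm dense in $\G_*$ (Hahn--Banach plus faithfulness of $\chi$, together with $\|j(x)\|_{\G_*}\leq\|x^*\Omega\|$ and density of $\mathcal{W}\Omega$ in $\mathcal{F}(H)$), the map $j(x)\mapsto j(\m_\varphi(x))$ extends to a complete contraction $T$ on $\G_*$, and $T^*$ is the desired normal completely contractive extension; uniqueness then follows as you indicate. Alternatively, your $L^2$-picture can be made rigorous without the predual: for $x\in\G$ take a bounded net $w_i\in\mathcal{W}$ with $w_i\to x$ $\ast$-strongly (Kaplansky), observe $\m_\varphi(w_i)\Omega=m_\varphi(w_i\Omega)\to m_\varphi(x\Omega)$ in $L^2$, and use that $\Omega$ is separating to see that all $\sigma$-weak cluster points of the bounded net $(\m_\varphi(w_i))$ coincide. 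Either way, some argument of this kind must replace the appeal to a general principle, and in both versions the modular group is not the active ingredient --- the transposition identity (equivalently, the existence of a bounded pre-adjoint) is.
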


\begin{proof}
This is a standard fact. The space $\mathcal{W}$ is norm dense in $\GG$  which is 
weak-$*$ dense in $\G$. So $\mathcal{W}$ is also norm dense in $\G_*$ using the
basic embedding $\G \to \G_*$ given by $j(x)(y)=\chi(xy)$ (where $\chi$ denotes the free quasi-free state). By a duality argument, 
$\m_{\varphi} : \mathcal{W} \to \mathcal{W}$ extends uniquely to a completely 
contractive map on $\G_*$, say $T$. Thus $T^*$ is the only operator that 
satisfies the conclusion.  
\end{proof}

If $\m_\varphi$ is completely bounded on $\GG$, we say that $\m_\varphi$ is a {\em radial 
multiplier} on $\G$.

\begin{theo}\label{main}
Let $\varphi: \N \to \C$ be any function and $H_\R$ an infinite dimensional real Hilbert
space with a one-parameter group $(U_t)$ of orthogonal transformations. Then $\varphi$ defines a completely bounded radial multiplier
on $\G$ if and only if the radial functional $\gamma$ on $\mathcal{T}$
associated to $\varphi$ is bounded. Moreover
$$\| \m_\varphi\|_{\cb}= \| \gamma \|_{\mathcal{T}^*}.$$
\end{theo}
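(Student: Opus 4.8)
The plan is to establish the equivalence and norm identity by exploiting the universal Toeplitz structure of $\Ch$ together with the Wick formula, so that the question of complete boundedness of $\m_\varphi$ on $\GG$ reduces to the one-variable functional $\gamma$ on $\mathcal{T}$, whose norm is computed exactly in Proposition \ref{calcul}. First I would treat the \emph{easy direction}: assume $\m_\varphi$ is completely bounded on $\GG$. I would realize $\mathcal{T}$ inside $\Ch$ (and hence the free Araki-Woods machinery) by picking a single unit vector $e \in K_\R + iK_\R$ with $\langle \overline e, e\rangle = 0$, so that the operators $W(e^{\otimes n})$ given by the Wick formula behave like the powers $S^iS^{*j}$ of a shift. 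Specializing the Wick formula along such a vector should identify the restriction of $\m_\varphi$ to the subalgebra generated by these $W(e^{\otimes n})$ with the radial functional $\gamma$ acting on $\mathcal{T}$, yielding $\|\gamma\|_{\mathcal{T}^*} \leq \|\m_\varphi\|_{\cb}$.

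The \emph{hard direction} is to show that if $\gamma$ is bounded on $\mathcal{T}$ then $\m_\varphi$ is completely bounded on $\GG$, with $\|\m_\varphi\|_{\cb} \leq \|\gamma\|_{\mathcal{T}^*}$. Here I would use the explicit description from Proposition \ref{calcul}: write $\varphi(n) = c_1 + c_2(-1)^n + \psi(n)$ with $\psi(n)\to 0$, and decompose $\m_\varphi$ correspondingly. The constant part $c_1$ contributes the identity map (completely contractive after normalization by $|c_1|$), and the alternating part $c_2(-1)^n$ corresponds to the grading automorphism $W(e_1\otimes\cdots\otimes e_n) \mapsto (-1)^n W(e_1\otimes\cdots\otimes e_n)$, which is $\ast$-automorphism induced by second quantization of $-\Id$ on $H$ and hence completely isometric. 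The genuinely analytic part is the multiplier attached to $\psi$, where $B = [\psi(i+j) - \psi(i+j+2)]_{i,j\geq 0}$ is trace-class. I expect to factor the trace-class operator as $B = \sum_k \langle \cdot, a_k\rangle b_k$ with $\sum_k \|a_k\|\,\|b_k\| = \|B\|_1 + \varepsilon$, and then build, for each rank-one piece, a completely bounded map on $\Ch$ of the corresponding norm that is adapted to the shift structure, exactly paralleling the Haagerup--Szwarc treatment of Schur multipliers.

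The central technical device, and the step I expect to be the \emph{main obstacle}, is constructing these completely bounded maps directly on $\Ch$ and checking that they descend to $\m_\varphi$ on $\mathcal{W}$ via the Wick formula --- precisely the point where Gilbert's theorem is unavailable, as the authors note. I would realize each rank-one contribution as a map of the form $x \mapsto R^* (x \otimes 1) T$ (or a sum of shifts composed with the creation/annihilation operators) acting on the Fock space, using Lemma \ref{majf} to control the operator norms of the building blocks $\frac1n\sum \alpha_i \ell(e_i)\ell(f_i)^*$ and $\frac1n\sum\alpha_i\ell(e_i)\ell(f_i)$. The key verification is that when such a map is applied to $W(e_1\otimes\cdots\otimes e_n)$ expanded by the Wick formula, the annihilation terms $\ell(\overline e_{k+1})^*\cdots\ell(\overline e_n)^*$ and creation terms $\ell(e_1)\cdots\ell(e_k)$ combine so that the coefficient depends only on the total length $n$ through $\psi$, with no dependence on the particular vectors $e_k$; this is the ``shift algebra preserved by the multiplier'' that makes the one-variable computation on $\mathcal{T}$ transfer verbatim.

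Once complete boundedness of $\m_\varphi$ on $\GG$ is established, Lemma \ref{norext} upgrades it automatically to a normal completely contractive (after scaling) extension on $\G$, so no separate weak-$*$ argument is needed. Combining the two inequalities gives the norm identity $\|\m_\varphi\|_{\cb} = \|\gamma\|_{\mathcal{T}^*}$. I would finally remark that the hypothesis that $H_\R$ is infinite dimensional is used only to guarantee enough orthonormal families for the lower bound and the matrix-amplification estimates in the complete-boundedness argument; the assumption plays no role beyond providing room to embed arbitrarily large pieces of the shift structure.
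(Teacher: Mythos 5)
Your overall architecture (two inequalities, Wick formula, reduction to the Toeplitz algebra $\mathcal{T}$) matches the paper's, but both of your key mechanisms have genuine gaps. For the direction ``$\m_\varphi$ completely bounded $\Rightarrow \gamma$ bounded'', the single-vector idea fails: for one vector $e$ with $\langle \overline e, e\rangle = 0$, the recursion used in the proof of the Wick formula gives $W(e^{\otimes n}) = W(e)^n$, where $W(e) = \ell(e) + \ell(\overline e)^*$ is a circular-type element, not a shift. Indeed $W(e)^*W(e) = 1 + W(\overline e \otimes e)$, and the error term $W(\overline e \otimes e)$ has norm of order one, so the C$^*$-algebra generated by the $W(e^{\otimes n})$ is nothing like $\mathcal{T}$ (for trivial $(U_t)$ a circular element generates a free group factor); the restriction of $\m_\varphi$ to it is a radial multiplier with respect to a circular generator, and deducing boundedness of $\gamma$ on $\mathcal{T}$ from that is essentially the very theorem being proved. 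This is exactly why the paper takes an \emph{infinite} orthonormal family $(e_i)$ with $\langle \overline e_i, e_j\rangle = \langle \overline e_i, \overline e_j\rangle = 0$, forms the averaged elements $S_n = n^{-1/2}\sum_{i=1}^n \ell(e_i) \otimes W(e_i)$, proves the asymptotic shift relations (Lemmas \ref{cas00} and \ref{cas11}, which is where Lemma \ref{majf} is actually used --- not in the other direction, as you suggest), and then passes to an ultraproduct to obtain an exact nonunitary isometry $S$ on which the ultrapower of $\Id \otimes \m_\varphi$ acts radially; composing with the trivial character of $\langle S\rangle \cong \mathcal{T}$ produces $\gamma$ with $\|\gamma\|_{\mathcal{T}^*} \leq \|\m_\varphi\|_{\cb}$. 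Note that this is also the only place where infinite dimensionality of $H_\R$ is genuinely needed, which sits awkwardly with your closing remark, since your own version of this direction uses just one vector.

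For the converse direction your plan is incomplete precisely at the point you yourself flag as ``the main obstacle'': decomposing $\varphi(n) = c_1 + c_2(-1)^n + \psi(n)$ via Proposition \ref{calcul} handles the constant part (identity) and the alternating part (the grading automorphism $\Gamma(-\Id)$), but the construction of completely bounded maps on $\Ch$ implementing the trace-class part $B$ rank-one piece by rank-one piece is never carried out, and even if it were, you would still need its cb norm to be at most $\|B\|_1$ to recover the exact equality. The paper avoids all of this with a short argument that makes no use of Proposition \ref{calcul} in this implication: by Pimsner's universal property of $\Ch$, the map $u(\xi) = \ell(\xi) \otimes S$ satisfies $u(f)^*u(e) = \langle f, e\rangle$, hence there is a $\ast$-homomorphism $\pi : \Ch \to \Ch \otimes_{\min} \mathcal{T}$ with $\pi(\ell(\xi)) = \ell(\xi) \otimes S$; then $\m_\varphi = (\Id \otimes \gamma) \circ \pi$ is completely bounded with $\|\m_\varphi\|_{\cb} \leq \|\gamma\|_{\mathcal{T}^*}$ (a bounded functional is automatically completely bounded with the same norm), and the Wick formula shows that this map is the radial multiplier on $\mathcal{W}$; Lemma \ref{norext} then gives the normal extension to $\G$, as you say. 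Proposition \ref{calcul} only enters to make $\|\gamma\|_{\mathcal{T}^*}$ explicit. Without these two mechanisms --- the tensor-with-the-shift homomorphism for the upper bound, and the averaged creation operators plus ultraproduct for the lower bound --- the proposal does not constitute a proof.
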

Thanks to Proposition \ref{calcul}, we have an explicit formula for 
$\| \gamma \|_{\mathcal{T}^*}$.

\begin{proof}[Proof of the upper bound]
We assume that $\varphi$ gives a bounded functional $\gamma$ on $\mathcal{T}$.
By the universal property of $\Ch$, there is a $\ast$-homomorphism
$$\pi : \begin{array}{ccc}\Ch &\to  & \Ch \otimes_{\min} \mathcal{T}\\
\ell(\xi) &\mapsto & \ell(\xi)\otimes S
\end{array}. $$
So the map $\m_\varphi =(\Id \otimes \gamma) \pi : \Ch \to \Ch$ is completely bounded on
$\Ch$ with norm $||\gamma||_{\mathcal{T}^*}$. We have, for all $n \in \N$ and all $e_{k} \in K_\R + i K_\R$:
$$\m_\varphi(\ell(e_{1}) \cdots \ell(e_{k}) \ell(\overline e_{{k+1}})^* \cdots \ell(\overline
e_{n})^*)=\varphi(n) \ell(e_{1}) \cdots \ell(e_{k}) \ell(\overline
e_{{k+1}})^* \cdots \ell(\overline e_{n})^*$$ 
Recall that the Wick formula (Lemma \ref{Wick}) says
$$W(e_1\otimes \cdots\otimes e_n)=\sum_{k=0}^n \ell(e_{1}) \cdots \ell(e_{k}) \ell(\overline
e_{{k+1}})^* \cdots \ell(\overline e_{n})^*.$$ 
Thus we derive that $\m_\varphi(W(e_1\otimes \cdots \otimes e_n))= \varphi(n)W(e_1\otimes \cdots \otimes e_n)$. So $\m_\varphi$ is bounded on $\GG$ and is a radial multiplier.
\end{proof}

To check the necessity of the condition, the idea is similar to
\cite{HSS} or \cite{Wys}. We find a shift algebra on which $\m_\varphi$
acts. We start by taking an orthonormal system $(e_{i})_{i\geq 1}$ in $K_\R + i K_\R$ such that $\langle \overline e_i,
\overline e_j\rangle = \langle \overline e_i,
e_j\rangle=0$ for all $i\neq j$ and $\|\overline e_i\|\leq 1$ (this is
possible by the Gram-Schmidt algorithm). Consider the following element 
for $n\geq 1$:
$$S_n=\frac 1 {\sqrt n} \sum_{i=1}^n \ell(e_i)\otimes W(e_i) \in \mathcal{T}(H) \otimes \mathbf{B}(\mathcal{F}(H)).$$

\begin{lem}\label{cas00}
For all $n\geq 1$, $$||S_n^*S_n-1||_\infty \leq \frac  3 {\sqrt n}.$$
\end{lem}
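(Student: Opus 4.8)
The plan is to compute $S_n^* S_n$ explicitly and then reduce the estimate to the two elementary bounds of Lemma~\ref{majf}. First I would use that $(e_i)$ is orthonormal, so that $\ell(e_i)^* \ell(e_j) = \langle e_i, e_j\rangle = \delta_{ij}$. Hence in
$$S_n^* S_n = \frac1n \sum_{i,j=1}^n \ell(e_i)^* \ell(e_j) \otimes W(e_i)^* W(e_j)$$
the first tensor leg kills all off-diagonal terms, leaving $S_n^* S_n = \frac1n \sum_i 1 \otimes W(e_i)^* W(e_i)$. Since $W(e) = \ell(e) + \ell(\overline e)^*$ and $I$ is an involution (so $\overline{\overline e} = e$), one has $W(e_i)^* = W(\overline e_i)$, and I would next expand
$$W(\overline e_i) W(e_i) = \big(\ell(\overline e_i) + \ell(e_i)^*\big)\big(\ell(e_i) + \ell(\overline e_i)^*\big) = 1 + \ell(\overline e_i)\ell(e_i) + \ell(\overline e_i)\ell(\overline e_i)^* + \ell(e_i)^*\ell(\overline e_i)^*,$$
using $\ell(e_i)^* \ell(e_i) = \|e_i\|^2 = 1$ for the constant term.

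Summing and dividing by $n$, the constant terms add up to the identity, which cancels the $1$ that is subtracted. Thus (identifying the unit of $\Ch$)
$$S_n^* S_n - 1 = A + A^* + B, \quad A = \frac1n \sum_{i=1}^n \ell(\overline e_i)\ell(e_i), \quad B = \frac1n\sum_{i=1}^n \ell(\overline e_i)\ell(\overline e_i)^*,$$
since the last summand is exactly $A^*$. It then remains to bound $\|A\|_\infty$ and $\|B\|_\infty$. Because $I$ is invertible and $\|e_i\| = 1$, each $\overline e_i \ne 0$; setting $\widehat e_i = \overline e_i / \|\overline e_i\|$ gives an orthonormal family (the $\overline e_i$ being pairwise orthogonal by hypothesis) and scalars $\alpha_i = \|\overline e_i\| \le 1$. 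Rewriting $A = \frac1n\sum_i \alpha_i \ell(\widehat e_i)\ell(e_i)$ and $B = \frac1n\sum_i \alpha_i^2 \ell(\widehat e_i)\ell(\widehat e_i)^*$, I would invoke the second inequality of Lemma~\ref{majf} (with the orthonormal families $(\widehat e_i)$ and $(e_i)$) to get $\|A\|_\infty \le \frac1{\sqrt n}$, and the first inequality (with $(\widehat e_i)$ in both slots) to get $\|B\|_\infty \le \frac1n$.

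Putting these together yields $\|S_n^* S_n - 1\|_\infty \le 2\|A\|_\infty + \|B\|_\infty \le \frac{2}{\sqrt n} + \frac1n \le \frac{3}{\sqrt n}$, since $\frac1n \le \frac1{\sqrt n}$ for $n \ge 1$. The only points that require care are the identity $W(e_i)^* = W(\overline e_i)$ together with the cancellation of the constant term against the subtracted $1$, and the bookkeeping that the two surviving ``small'' pieces are precisely of the shape covered by Lemma~\ref{majf}. I do not expect a genuine obstacle: the orthogonality hypotheses on $(e_i)$ and $(\overline e_i)$ are exactly what is needed to collapse the double sums to diagonal ones, so that the two scalar-type estimates of Lemma~\ref{majf} apply verbatim after the normalization above.
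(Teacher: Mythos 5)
Your proposal is correct and follows essentially the same route as the paper: expand $W(e_i)^*W(e_i) = 1 + \ell(\overline e_i)\ell(e_i) + \ell(\overline e_i)\ell(\overline e_i)^* + \ell(e_i)^*\ell(\overline e_i)^*$, collapse the double sum using $\ell(e_i)^*\ell(e_j) = \delta_{ij}$, and bound the surviving diagonal terms by Lemma~\ref{majf} to get $\frac{2}{\sqrt n} + \frac{1}{n} \leq \frac{3}{\sqrt n}$. The only difference is cosmetic: you spell out the normalization $\widehat e_i = \overline e_i/\|\overline e_i\|$ and the role of the scalars $\alpha_i = \|\overline e_i\| \leq 1$, which the paper leaves implicit when it invokes Lemma~\ref{majf}.
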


\begin{proof}
We have
\begin{eqnarray*}
W(e_i)^*W(e_i) & = & (\ell(e_i)^* + \ell(\overline e_i)) (\ell(e_i) + \ell(\overline e_i)^*) \\
& = & 1 + \ell(\overline e_i) \ell(e_i) + \ell(\overline e_i) \ell(\overline e_i)^* + \ell(e_i)^*\ell(\overline e_i)^*\\
& = & 1+ W(\overline e_i \otimes e_i).
\end{eqnarray*}
It follows that 
\begin{eqnarray*}
S_n^*S_n &= & \frac 1 n \sum_{i,j=1}^n \ell(e_i)^* \ell(e_j)\otimes W(e_i)^*W(e_j)\\
& =& 1\otimes 1 + \frac 1 n \sum_{i=1}^n 1 \otimes (\ell(\overline e_i) \ell(e_i) + \ell(\overline e_i) \ell(\overline e_i)^* + \ell(e_i)^*\ell(\overline e_i)^*).
\end{eqnarray*}
Lemma \ref{majf} yields
$$\left\|\frac 1 n\sum_{i=1}^n 1 \otimes W(\overline e_i \otimes e_i)\right\|_\infty \leq \frac 3{\sqrt n},$$ so that we get the estimate.
\end{proof}
For convenience, we will use a standard multi-index notation, we write $\underline i$ for $(i_1,\dots,i_n)\in \N^n$ and $|\underline i|=n$. 
For $\alpha,\beta\geq 0$, set
\begin{eqnarray*}
e_{\underline{i}}^{\alpha, \beta} & = & e_{i_1}\otimes \cdots \otimes e_{i_\alpha} \otimes \overline e_{i_{\alpha+1}} \cdots \otimes \overline e_{i_{\alpha+\beta}} \\
V_{\alpha,\beta}^n & = & n^{-\frac{\alpha+\beta}{2}} \sum_{i_1, \dots,i_{\alpha+\beta}=1}^n
\ell(e_{i_1}) \cdots \ell(e_{i_{\alpha}}) \ell( e_{i_{\alpha+1}})^*\cdots \ell(e_{i_{\alpha+\beta}})^*
\otimes W(e_{\underline{i}}^{\alpha, \beta}),
\end{eqnarray*}
if $\alpha + \beta>0$ and $V_{0,0}^n=1\otimes 1$.

\begin{lem}\label{cas11}
For all $\alpha,\beta\geq 0$, 
$$S_n^{\alpha}S_n^{*\beta}-V_{\alpha,\beta}^n= O\left(\frac 1{\sqrt n}\right).$$
\end{lem}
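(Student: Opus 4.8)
The plan is to prove the estimate by induction on the total length $\alpha+\beta$, establishing simultaneously that $\|V_{\alpha,\beta}^n\|_\infty$ stays bounded uniformly in $n$ and that $S_n^\alpha S_n^{*\beta}-V_{\alpha,\beta}^n=O(1/\sqrt n)$. The base cases $V_{0,0}^n=1\otimes 1$, $V_{1,0}^n=S_n$ and $V_{0,1}^n=S_n^*$ are exact identities (for the last one using $W(e)^*=W(\overline e)$, which follows from $I=I^{-1}$). The needed bound on $\|S_n\|_\infty$ comes from Lemma \ref{cas00}: $\|S_n\|_\infty^2=\|S_n^*S_n\|_\infty\le 1+3/\sqrt n$, so $\|S_n\|_\infty\le 2$ for large $n$, whence $\|S_n^\alpha S_n^{*\beta}\|_\infty$ and (by the inductive approximation) $\|V_{\alpha,\beta}^n\|_\infty$ are uniformly bounded.

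The inductive step rests on a single-step identity. Writing $S_n^\alpha S_n^{*\beta}=S_n\,(S_n^{\alpha-1}S_n^{*\beta})$ when $\alpha\ge 1$ and invoking the inductive hypothesis $S_n^{\alpha-1}S_n^{*\beta}=V_{\alpha-1,\beta}^n+O(1/\sqrt n)$ together with $\|S_n\|_\infty$ bounded, it suffices to show that $S_nV_{\alpha-1,\beta}^n=V_{\alpha,\beta}^n+O(1/\sqrt n)$; the case $\alpha=0,\ \beta\ge 1$ is handled symmetrically through $S_n^{*\beta}=(S_n^{*\beta-1})S_n^*$ and the mirror identity $V_{\alpha,\beta-1}^nS_n^*=V_{\alpha,\beta}^n+O(1/\sqrt n)$. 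To establish the first identity I expand the product: the first leg of $S_nV_{\alpha-1,\beta}^n$ is obtained by prepending one creation operator, which is already Wick-ordered so no contraction occurs there, while the second leg produces $W(e_{i_0})\,W(e_{\underline i}^{\alpha-1,\beta})$. The one-step multiplication rule contained in the proof of Lemma \ref{Wick}, namely
$$W(e_{i_0})\,W(g_1\otimes\cdots\otimes g_m)=W(e_{i_0}\otimes g_1\otimes\cdots\otimes g_m)+\langle \overline{e_{i_0}},g_1\rangle\,W(g_2\otimes\cdots\otimes g_m),$$
splits this into exactly two terms. Summing the leading term over all indices reassembles $V_{\alpha,\beta}^n$, and the second term is the correction $C_n$ that must be shown to be $O(1/\sqrt n)$.

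The heart of the argument is the estimate on $C_n$. By the orthogonality normalization of the family $(e_i)$, namely $\langle\overline{e_i},e_j\rangle=\langle\overline{e_i},\overline{e_j}\rangle=0$ for $i\ne j$, the scalar $\langle\overline{e_{i_0}},g_1\rangle$ (with $g_1=e_{i_1}$ or $g_1=\overline{e_{i_1}}$) vanishes unless $i_0=i_1$, in which case it is bounded by $\|\overline{e_{i_1}}\|^2\le 1$. The crucial observation is that after this contraction the index $i_1$ no longer appears in the second leg, since the multiplication rule has removed the first two vectors, so $i_1$ survives only in the two leading creation factors $\ell(e_{i_1})\ell(e_{i_1})$ (when $\alpha\ge 2$), or in $\ell(e_{i_1})\ell(e_{i_1})^*$ (when $\alpha=1$). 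I can therefore factor
$$C_n=\Big(\sum_{i_1=1}^n c_{i_1}\,\ell(e_{i_1})\ell(e_{i_1})\otimes 1\Big)\cdot n^{-1}V_{\alpha-2,\beta}^n,$$
with $|c_{i_1}|\le 1$ (and the obvious analogue, involving $\ell(e_{i_1})\ell(e_{i_1})^*$ and $n^{-1}V_{0,\beta-1}^n$, when $\alpha=1$). Lemma \ref{majf} bounds the first factor by $\sqrt n$ (respectively by $1$ in the $\ell(e_{i_1})\ell(e_{i_1})^*$ case), and by the inductive boundedness $\|V_{\alpha-2,\beta}^n\|_\infty=O(1)$; hence $\|C_n\|_\infty\le\sqrt n\cdot n^{-1}\cdot O(1)=O(1/\sqrt n)$, which closes the induction.

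The main obstacle, and the point where care is needed, is precisely this norm control of $C_n$. A naive Cauchy--Schwarz splitting of the two legs mismanages the normalization, because the contracted index and the $\beta$ annihilation indices are shared asymmetrically between the legs, and it yields a bound that is too weak (of order $n^{(\beta-1)/2}$). The clean way out is the factorization above, which isolates the contracted index into a single creation monomial controlled by Lemma \ref{majf} and recognizes the remaining sum as $n^{-1}$ times a strictly shorter $V^n_{\cdot,\cdot}$. Carrying the uniform boundedness of the $V_{\alpha,\beta}^n$ alongside the approximation throughout the induction is what guarantees that every error term is genuinely $O(1/\sqrt n)$.
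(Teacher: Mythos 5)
Your proposal is correct and follows essentially the same route as the paper's own proof: induction on $\alpha+\beta$ carrying the uniform boundedness of the $V^n_{\alpha,\beta}$ along, the one-step Wick multiplication identity producing exactly one contraction term, the orthogonality normalization of the $(e_i)$ forcing the contracted indices to coincide so that the correction factors as $\bigl(\frac1n\sum_i c_i\,\ell(e_i)\ell(e_i)\otimes 1\bigr)$ (or $\ell(e_i)\ell(e_i)^{(*)}$ in the boundary case) times a strictly shorter $V^n$, which Lemma \ref{majf} controls by $O(1/\sqrt n)$. The only cosmetic difference is the indexing of the inductive step (you peel a factor $S_n$ off $S_n^{\alpha}S_n^{*\beta}$, while the paper multiplies $V^n_{\alpha,\beta}$ by $S_n$ to reach $(\alpha+1,\beta)$), and both arguments dispose of the mirrored case by taking adjoints.
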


\begin{proof}
We do it by induction on $\alpha + \beta$. When $\alpha+\beta\leq 1$, there
is equality. Assume this holds for $(\alpha,\beta)$, we prove it for $(\alpha+1,\beta)$.
 First,  $S_nV_{\alpha,\beta}^n$ is equal to 
$$n^{-\frac{\alpha+\beta+1}{2}}\sum_{i_0, \dots, i_{\alpha+\beta} = 1}^n
\ell(e_{i_0}) \cdots \ell(e_{i_{\alpha}})
\ell(e_{i_{\alpha+1}})^* \cdots \ell(e_{i_{\alpha+\beta}})^*
\otimes W(e_{i_0})W(e_{\underline{i}}^\alpha).$$
Recall the identity 
$$W(h)W(h_1\otimes \cdots )=W(h \otimes h_1 \otimes \cdots) + \langle \overline h,h_1\rangle W(h_2 \otimes \cdots)$$ 
used in the proof of the Wick formula. Therefore 
$$S_nV_{\alpha,\beta}^n = V_{\alpha+1,\beta}^n + \left(\frac 1 n \sum_{i=1}^n \langle \overline e_i,e_i^{(*)}\rangle \ell(e_i)\ell(e_i)^{(*)}\otimes 1\right) V_{\tilde \alpha, \tilde \beta}^n$$
where $(*)=1$, $\tilde \alpha=\alpha-1$, $\tilde \beta=\beta$ and $e_i^{(*)}=e_i$ if $\alpha>0$,
and $\ell(e_i)^{(*)} = \ell(\overline e_i)^*$, $\tilde \alpha=0$, $\tilde \beta=\beta-1$ and $e_i^{(*)}=\overline e_i$ if $\alpha=0$.
We have by Lemma \ref{majf} $\frac 1 n\sum_{i=1}^n \langle \overline e_i,e_i^{(*)}\rangle \ell(e_i) \ell(\overline e_i)^{*}=O\left(\frac 1 n\right)$ and 
$\frac 1 n\sum_{i=1}^n \langle \overline e_i,e_i^{(*)}\rangle \ell(e_i) \ell(e_i)=O\left(\frac 1 {\sqrt{n}}\right)$. This yields
$S_nV_{\alpha,\beta}^n - V^n_{\alpha+1,\beta}=O\left(\frac 1 {\sqrt{n}}\right)$.
According to Lemma \ref{cas00} and the induction hypothesis, $S_n$ and
then $V_{a,b}^n$ for $a+b\leq \alpha+\beta$ are uniformly bounded in
 $n$. Consequently,
$$S_n^{\alpha+1}S_n^{*\beta}-V_{\alpha+1,\beta}^n= S_n\left(S_n^{\alpha}S_n^{*\beta}-V_{\alpha,\beta}^n\right) + O\left(\frac 1 {\sqrt{n}}\right)= O\left(\frac 1 {\sqrt{n}}\right).$$
The other case $(\alpha,\beta+1)$ is obtained by taking adjoints.
\end{proof}

\begin{proof}[Proof of the lower bound]
Assume $\m_\varphi$ is a completely bounded multiplier on the free Araki-Woods factor $\G$.
Let $\mathfrak U$ be a nontrivial ultrafilter on $\N$. Set $\mathcal{B} = \mathcal{T}(H) \otimes \mathbf{B}(\mathcal{F}(H))$ so that $S_n \in \mathcal{B}$. Consider the $C^*$-algebra 
$\mathcal{A}=\prod_{\mathfrak U} \mathcal{B}$, and $T$ the ultrapower of $\Id \otimes \m_\varphi$.
The element $S=(S_n)\in \mathcal{A}$ satisfies $S^*S=1$ by Lemma
\ref{cas00}.  As $(\Id \otimes
\m_\varphi)(V_{\alpha,\beta}^n)=\varphi(\alpha + \beta)V_{\alpha,\beta}^n$, we
get by Lemma \ref{cas11}, $T(S^\alpha S^{*\beta})=\varphi(\alpha + \beta)
S^\alpha S^{*\beta}$. Taking a particular non constant $\varphi$ (that
does exist), this shows that $S$ is non unitary and $S$ is a shift. Thus, $T$
leaves $\mathcal{T}=\langle S\rangle$ invariant. By composing it with the trivial character $\omega$ of 
$\mathcal{T}$ ($\omega(S^\alpha S^{*\beta})=1)$, we obtain that $\gamma = \omega T$ is 
a bounded functional on $\mathcal{T}$ with $||\gamma||_{\mathcal{T}^*} \leq ||\m_\varphi||_{\cb}$.
\end{proof}

A linear map between $C^*$-algebras $\mathcal A$ and $\mathcal B$ is
decomposable if it is a linear combination of completely positive maps
from $\mathcal A$ to $\mathcal B$. Any functional can be decomposed
into sums of states, so we have:

\begin{cor}
Any radial multiplier on $\GG$ is decomposable from $\GG$ into $\Ch$.
\end{cor}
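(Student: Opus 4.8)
The plan is to exploit the explicit factorization of $\m_\varphi$ produced in the proof of the upper bound of Theorem \ref{main}. There we realized the multiplier, at the level of the Toeplitz algebra, as
$$\m_\varphi = (\Id \otimes \gamma) \circ \pi : \Ch \to \Ch,$$
where $\pi : \Ch \to \Ch \otimes_{\min} \mathcal{T}$ is the $\ast$-homomorphism determined by $\ell(\xi) \mapsto \ell(\xi) \otimes S$ and $\gamma$ is the radial functional on $\mathcal{T}$ attached to $\varphi$. By hypothesis $\m_\varphi$ is a radial multiplier on $\G$, so Theorem \ref{main} guarantees that $\gamma$ is a \emph{bounded} functional on $\mathcal{T}$. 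The point is that the whole multiplier is assembled from this single bounded functional, and the decomposability will follow simply by decomposing $\gamma$.

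Next I would invoke the standard fact recalled just above the statement: the dual of the $C^*$-algebra $\mathcal{T}$ is linearly spanned by its states. Concretely, splitting $\gamma$ into its Hermitian and anti-Hermitian parts and applying the Jordan decomposition in $\mathcal{T}^*$ (equivalently, in the predual of the enveloping von Neumann algebra $\mathcal{T}^{**}$), I would write
$$\gamma = \sum_{k=1}^{4} c_k\, \gamma_k, \qquad c_k \in \C,$$
with each $\gamma_k$ a state on $\mathcal{T}$.

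Then each slice map $\Id \otimes \gamma_k : \Ch \otimes_{\min} \mathcal{T} \to \Ch$ is completely positive, since slicing a minimal tensor product by a positive functional preserves complete positivity. As $\pi$ is a $\ast$-homomorphism, hence completely positive, each composite $(\Id \otimes \gamma_k) \circ \pi : \Ch \to \Ch$ is completely positive. Consequently
$$\m_\varphi = \sum_{k=1}^{4} c_k\, (\Id \otimes \gamma_k)\circ \pi$$
is a linear combination of completely positive maps from $\Ch$ to $\Ch$. Restricting each summand to the $C^*$-subalgebra $\GG \subset \Ch$ preserves complete positivity while keeping the target equal to $\Ch$; this exhibits $\m_\varphi$ as a decomposable map from $\GG$ into $\Ch$, as desired.

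The only point requiring a little care — and the closest thing here to an obstacle — is to keep the codomain equal to $\Ch$ throughout. I must use the concrete factorization through $\Ch \otimes_{\min} \mathcal{T}$ coming from the upper-bound argument, rather than any abstract description of $\m_\varphi$ on the von Neumann algebra $\G$, so that the slicing construction genuinely produces completely positive maps valued in the Toeplitz algebra $\Ch$ and not merely in some larger completion.
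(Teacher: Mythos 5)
Your proposal is correct and is essentially the paper's own argument: the paper proves the corollary exactly by the remark preceding it, namely that the bounded functional $\gamma$ (bounded by Theorem \ref{main}) decomposes as a linear combination of states, so that slicing the factorization $\m_\varphi = (\Id \otimes \gamma)\circ\pi$ through $\Ch \otimes_{\min} \mathcal{T}$ by each state produces completely positive maps, exhibiting $\m_\varphi$ as decomposable from $\GG$ into $\Ch$. You have merely spelled out the Jordan decomposition and the slice-map details that the paper leaves implicit, so there is nothing to correct.
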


More generally, a function $\varphi : \N \to \C$ defines a radial
multiplier on $\Ch$ if the map $T_\varphi$ given by
$$T_\varphi(\ell(e_{1}) \cdots \ell(e_{k}) \ell(e_{{k+1}})^* \cdots \ell(e_{n})^*)=\varphi(n) \ell(e_{1}) \cdots \ell(e_{k}) \ell(e_{{k+1}})^* \cdots \ell(e_{n})^*$$ 
extends to a completely bounded map on $\Ch$. The above proof actually gives the following

\begin{cor}
For $\varphi : \N \to \C$, we have $||\m_\varphi||_{\cb}=||T_\varphi||_{\cb}=||T_\varphi||$.
\end{cor}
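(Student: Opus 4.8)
The plan is to reduce everything to the single quantity $\|\gamma\|_{\mathcal{T}^*}$, where $\gamma$ is the radial functional on $\mathcal{T}$ associated to $\varphi$, and to show that each of the three norms in the statement equals it, all equalities being understood in $[0,+\infty]$ so that the case where $T_\varphi$ fails to be bounded is covered automatically. The quantity $\|\m_\varphi\|_{\cb}$ is already disposed of: Theorem \ref{main} gives $\|\m_\varphi\|_{\cb}=\|\gamma\|_{\mathcal{T}^*}$. It therefore remains to prove
\[
\|\gamma\|_{\mathcal{T}^*}\le \|T_\varphi\|\le \|T_\varphi\|_{\cb}\le \|\gamma\|_{\mathcal{T}^*},
\]
in which the middle inequality $\|T_\varphi\|\le\|T_\varphi\|_{\cb}$ is automatic.

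For the rightmost inequality I would simply reread the proof of the upper bound in Theorem \ref{main}. There the map on $\Ch$ defined by the formula for $T_\varphi$ was realized as $(\Id\otimes\gamma)\circ\pi$, where $\pi:\Ch\to\Ch\otimes_{\min}\mathcal{T}$ is the $\ast$-homomorphism $\ell(\xi)\mapsto\ell(\xi)\otimes S$. Since $\pi$ is completely contractive and $\gamma$ is a bounded functional, hence completely bounded with the same norm, this yields $\|T_\varphi\|_{\cb}\le\|\gamma\|_{\mathcal{T}^*}$, with no new argument needed.

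The only genuinely new point is the leftmost inequality $\|\gamma\|_{\mathcal{T}^*}\le\|T_\varphi\|$, and here the situation on $\Ch$ is far cleaner than on $\G$: neither an ultrafilter nor any error term is required. Fix a unit vector $e\in H$ and set $S=\ell(e)$. Then $S^*S=\langle e,e\rangle=1$ while $SS^*\Omega=0$, so $S$ is a non-unitary isometry; by the universal property of $\mathcal{T}$ recalled above it generates a copy of $\mathcal{T}$ inside $\Ch$. The words $S^\alpha S^{*\beta}=\ell(e)\cdots\ell(e)\ell(e)^*\cdots\ell(e)^*$ are already in normal order, all creations standing to the left of all annihilations, so the defining formula for $T_\varphi$ applies to them unambiguously and gives $T_\varphi(S^\alpha S^{*\beta})=\varphi(\alpha+\beta)S^\alpha S^{*\beta}$. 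Hence $\langle S\rangle\cong\mathcal{T}$ is invariant under $T_\varphi$, and the restriction is precisely the radial multiplier on $\mathcal{T}$ sending $S^\alpha S^{*\beta}$ to $\varphi(\alpha+\beta)S^\alpha S^{*\beta}$. Composing with the trivial character $\omega$ of $\mathcal{T}$, for which $\omega(S^\alpha S^{*\beta})=1$, recovers $\gamma=\omega\circ(T_\varphi|_{\langle S\rangle})$, so that $\|\gamma\|_{\mathcal{T}^*}\le\|\omega\|\,\|T_\varphi|_{\langle S\rangle}\|\le\|T_\varphi\|$, using that restriction to an invariant subalgebra does not increase the norm.

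Chaining the three inequalities forces $\|T_\varphi\|=\|T_\varphi\|_{\cb}=\|\gamma\|_{\mathcal{T}^*}$, and combined with Theorem \ref{main} this gives the asserted triple equality. The main thing to get right is the verification that $T_\varphi$ is unambiguously defined on, and preserves, the singly generated copy of $\mathcal{T}$, acting there exactly by the scalars $\varphi(\alpha+\beta)$; once this normal-ordering observation is in place, everything else is bookkeeping of the norm inequalities and of the $[0,+\infty]$ convention that renders the unbounded case trivial.
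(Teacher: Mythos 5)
Your proof is correct and is essentially the argument the paper intends when it says ``the above proof actually gives the following'': the upper bound $\|T_\varphi\|_{\cb}\le\|\gamma\|_{\mathcal{T}^*}$ is read off from the factorization $T_\varphi=(\Id\otimes\gamma)\pi$ in the proof of Theorem \ref{main}, while the lower bound $\|\gamma\|_{\mathcal{T}^*}\le\|T_\varphi\|$ comes from restricting $T_\varphi$ to the copy of $\mathcal{T}$ generated by a single creation operator and composing with the trivial character. Your observation that $\Ch$ contains an exact (rather than approximate) shift, so that the ultraproduct machinery of the lower bound in Theorem \ref{main} is unnecessary here, is precisely the simplification the paper exploits.
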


\begin{rem}
The situation is very similar to the one of Herz-Schur and Schur
multipliers.  We have an extension of $\m_\varphi$ to a larger algebra
that remains a multiplier. Moreover its bounded and completely bounded
norms coincide.  Of course, by Stinespring's theorem, there is always
an extension of $\m_\varphi$ to a map from $\mathcal T(H)$ to
$\mathbf{B}(\mathcal F(H))$.  The point is that we cannot ensure it to be
$T_\varphi$, whereas for Herz-Schur and Schur multipliers this fact is
easy (this is related to Gilbert's argument). It is straightforward to
deduce the main result of this section from this corollary, but
unfortunately we have no direct way to prove it.
\end{rem}

\begin{rem}
We have to deal with completely bounded norms. Take a trivial $(U_t)$ and
$\varphi(n) = \delta_{n,1}$. We have $\|\m_\varphi\|_{\cb}=2$
by the above theorem. But because of the
invariance under orthogonal transformations of semicircular random
variables, it is standard to check that $\|\m_{\varphi}\|=\frac
{16}{3\pi}=\|s\|_1\|s\|_\infty$, where $s$ is a normalized
semicircular random variable.
\end{rem}

Taking $\delta_{\leq d}(n)=\delta_{n \leq d}$, the corresponding multiplier $P_d$ on $\Gamma(H_\R, U_t)$ is called the {\em projection} onto words
of length less than $d$. Thanks to Proposition \ref{calcul}, we get:

\begin{cor}
For any orthogonal group $(U_t)$ on an infinite dimensional real Hilbert space $H_\R$, $$\|P_d\|_{\cb(\Gamma(H_\R, U_t))} \mathop{\sim}\limits_{d\to \infty} \frac 4\pi d.$$
\end{cor}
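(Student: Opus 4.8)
The plan is to read off $\|P_d\|_{\cb}$ directly from the two tools already in place, Theorem \ref{main} and Proposition \ref{calcul}, and then reduce everything to the trace norm of one explicit finite matrix. Since $P_d = \m_{\delta_{\le d}}$ with $\delta_{\le d}(n) = \delta_{n \le d}$, Theorem \ref{main} gives $\|P_d\|_{\cb} = \|\gamma\|_{\mathcal{T}^*}$, where $\gamma$ is the radial functional on $\mathcal{T}$ associated with $\varphi = \delta_{\le d}$ (the $\cb$-norm is the same on $\Gamma(H_\R,U_t)$ and on $\G$ by Lemma \ref{norext}). To apply Proposition \ref{calcul} I would first note that $\varphi$ is eventually $0$, so $\varphi(n)\to 0$; comparing with the unique decomposition $\varphi(n) = c_1 + c_2(-1)^n + \psi(n)$, $\psi(n)\to 0$, forces $c_1 = c_2 = 0$ and $\psi = \varphi$. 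Hence $\|\gamma\|_{\mathcal{T}^*} = \|B_d\|_1$, where $B_d = [\varphi(i+j) - \varphi(i+j+2)]_{i,j\ge 0}$.

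Next I would make $B_d$ explicit. Since $\varphi(m) - \varphi(m+2) = 1$ exactly when $m \in \{d-1,d\}$ and is $0$ otherwise, the entry $(B_d)_{ij}$ equals $1$ precisely on the two antidiagonals $i+j = d-1$ and $i+j = d$, and vanishes elsewhere; in particular $B_d$ is supported on the top-left $(d+1)\times(d+1)$ block and has finite rank, so it is trace class. Reversing the order of the columns of this block is a permutation (hence unitary) operation preserving singular values, and it carries $B_d$ to the bidiagonal matrix $T_{d+1} = I_{d+1} + S_{d+1}$, where $S_{d+1}$ is the nilpotent upper shift (ones on the first superdiagonal). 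Thus $\|B_d\|_1 = \|T_{d+1}\|_1$.

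The remaining and main task is to compute the trace norm of $T_n = I_n + S_n$ asymptotically. I would diagonalize $T_n^* T_n$, which is the tridiagonal Jacobi matrix with diagonal $(1,2,\dots,2)$ and off-diagonal entries $1$; solving the associated three-term recurrence with these boundary conditions yields the singular values $\sigma_k = 2\cos\bigl(\tfrac{k\pi}{2n+1}\bigr)$, $k = 1,\dots,n$ (the small cases $n=1,2$ fix the normalization). Consequently
\[
\|B_d\|_1 = \|T_{d+1}\|_1 = \sum_{k=1}^{d+1} 2\cos\Bigl(\frac{k\pi}{2d+3}\Bigr),
\]
so that $\tfrac{1}{d+1}\|B_d\|_1$ is a Riemann sum for $\int_0^1 2\cos(\tfrac{\pi x}{2})\,dx = \tfrac{4}{\pi}$, giving $\tfrac1d\|B_d\|_1 \to \tfrac4\pi$ and hence $\|P_d\|_{\cb} \sim \tfrac4\pi d$. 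Alternatively one can bypass the exact diagonalization and invoke the Avram--Parter/Szeg\H{o} theorem for the singular-value distribution of banded Toeplitz matrices with symbol $f(\theta) = 1 + e^{-i\theta}$, $|f(\theta)| = 2|\cos(\theta/2)|$, which yields the same limit $\tfrac{1}{2\pi}\int_0^{2\pi} |f(\theta)|\,d\theta = \tfrac4\pi$. The only delicate point is pinning down the exact constant $\tfrac4\pi$: everything upstream is bookkeeping, while extracting the leading-order asymptotics of $\|T_n\|_1$ requires either the explicit singular values above or a singular-value limit theorem.
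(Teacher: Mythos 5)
Your proposal is correct, and its skeleton coincides with the paper's: invoke Theorem \ref{main} and Proposition \ref{calcul} for $\varphi=\delta_{\le d}$, note $c_1=c_2=0$, identify $B_d$ as the $0$-$1$ matrix supported on the two antidiagonals $i+j=d-1$ and $i+j=d$, and extract the constant $\tfrac4\pi$ from the asymptotics of $\|B_d\|_1$. Where you genuinely diverge is in that last computation. The paper adds the rank-one matrix $e_{d,d}$ so that, after reversing the columns, it gets a \emph{circulant} matrix $\Id_{d+1}+J_{d+1}$, whose singular values $|1+e^{2i\pi k/(d+1)}|$ are immediate; this is slick but implicitly uses that a rank-one perturbation changes the trace norm by at most $1$, hence does not affect the leading order. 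You instead work with the column-reversed matrix itself, the bidiagonal Toeplitz matrix $T_{d+1}=I_{d+1}+S_{d+1}$ with $S_{d+1}$ the nilpotent shift, and diagonalize $T^*T$ (a Jacobi matrix with diagonal $(1,2,\dots,2)$) to get the exact singular values $2\cos\bigl(\tfrac{k\pi}{2d+3}\bigr)$ — a classical fact about Jordan blocks, which I verified on small cases and which checks against trace and determinant. Both routes end with the same Riemann-sum limit $\tfrac4\pi$ (your Avram--Parter/Szeg\H{o} alternative gives it too). Your version has the small advantage of being exact, with no perturbation step to justify; the paper's circulant trick has the advantage that the diagonalization is trivial. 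The only cosmetic weakness in your write-up is the phrase that the cases $n=1,2$ ``fix the normalization'': the boundary conditions of the three-term recurrence determine the spectrum outright, so either spell that out or simply cite the known formula for the singular values of $I+S$.
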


\begin{proof}
We apply Theorem \ref{main} and Proposition \ref{calcul} to this
particular radial function. It is clear that $c_1=c_2=0$ in \ref{calcul}. It remains to estimate the trace
norm of $B=\sum_{i=0}^d e_{i,d-i} +\sum_{i=0}^{d-1} e_{i,d-1-i}$. 
To do so, $B+e_{d,d}$ is unitarily equivalent to a circulant matrix of size $d+1$, 
$\Id_{d+1}+J_{d+1}$ where $J_{d+1}=\sum_{i=0}^d e_{i,i+1}$. The singular values of $B$
are exactly $1+e^{\frac {2i\pi k}{d+1}}$, for $k=0,\dots,d$. We get that $\|B\|_1/d$ tends to $\int_0^1 |1+e^{2i\pi t}| {\rm d}t=\frac 4 \pi$.  
 \end{proof}

\begin{rem} The upper bound can be established directly (with a worse constant) using the argument of Chapter 3 in \cite{RicardXu}. We point out that   
the lower bound in \ref{main} remains true for the $q$-deformed algebras. 
\end{rem}

\begin{cor}\label{finrad}
For any orthogonal group $(U_t)$ on an infinite dimensional real Hilbert space
$H_\R$, there are finitely supported functions $\varphi_n : \N \to \R$ such that
$\lim_{n} \|\m_{\varphi_n}\|_{\cb} = 1$ and $\lim_n\varphi_n(k) = 1$ for all $k\geq 0$.
\end{cor}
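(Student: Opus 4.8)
The plan is to exploit the exact formula for the completely bounded norm furnished by Theorem \ref{main} and Proposition \ref{calcul}, which turns the problem into an elementary trace-norm estimate. The model functions I would use are the Haagerup functions $\varphi_r(n) = r^n$ for $0 < r < 1$. Since $\varphi_r(n) \to 0$, Proposition \ref{calcul} gives $c_1 = c_2 = 0$, whence $\|\m_{\varphi_r}\|_{\cb} = \|B_r\|_1$ with $B_r = [r^{i+j} - r^{i+j+2}]_{i,j \geq 0} = (1-r^2)\,uu^*$, where $u = (r^i)_{i \geq 0}$. As $B_r$ is a positive rank-one operator with $\|u\|_2^2 = (1-r^2)^{-1}$, I get $\|B_r\|_1 = (1-r^2)\|u\|_2^2 = 1$, so $\|\m_{\varphi_r}\|_{\cb} = 1$ for every $r$. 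Moreover $\varphi_r(k) = r^k \to 1$ as $r \to 1$ for each fixed $k$. Thus the $\varphi_r$ already enjoy all the required properties except finite support.

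The second step is to truncate. For $0 < r < 1$ and $d \geq 1$ set $\varphi_{r,d}(n) = r^n$ for $n \leq d$ and $\varphi_{r,d}(n) = 0$ for $n > d$; this is finitely supported and real-valued, and since it vanishes at infinity we again have $c_1 = c_2 = 0$, so $\|\m_{\varphi_{r,d}}\|_{\cb} = \|B_{r,d}\|_1$ (the associated $B_{r,d}$ being finite rank, in accordance with Proposition \ref{calcul}). As $\varphi \mapsto B$ is linear, $B_{r,d} - B_r$ is the Hankel matrix attached to $g = \varphi_r \mathbf{1}_{\{n > d\}}$, so it suffices to show $\|B(g)\|_1 \to 0$ as $d \to \infty$ for fixed $r$. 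Writing $b_k = g(k) - g(k+2)$, the symbol $b$ vanishes for $k < d-1$ and satisfies $|b_k| \leq r^k$ for all $k$. Decomposing $B(g) = \sum_k b_k A_k$, where $A_k$ is the anti-diagonal partial isometry $(A_k)_{ij} = \mathbf{1}_{\{i+j=k\}}$ with $\|A_k\|_1 = k+1$, the triangle inequality yields
$$\|B(g)\|_1 \leq \sum_{k \geq d-1} |b_k|\,(k+1) \leq \sum_{k \geq d-1} (k+1)\, r^k,$$
which is the tail of a convergent series and hence tends to $0$ as $d \to \infty$. Consequently $\bigl|\,\|\m_{\varphi_{r,d}}\|_{\cb} - 1\,\bigr| = \bigl|\,\|B_{r,d}\|_1 - \|B_r\|_1\,\bigr| \leq \|B(g)\|_1 \to 0$.

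Finally I would diagonalize. Choosing $r_n \to 1$ (say $r_n = 1 - 1/n$) and then, for each $n$, picking $d_n \geq n$ large enough that $\bigl|\,\|B_{r_n,d_n}\|_1 - 1\,\bigr| \leq 1/n$, the functions $\varphi_n := \varphi_{r_n,d_n}$ are finitely supported and real-valued, satisfy $\|\m_{\varphi_n}\|_{\cb} = \|B_{r_n,d_n}\|_1 \to 1$, and obey $\varphi_n(k) = r_n^k \to 1$ for each fixed $k$ (the support contains $k$ once $n \geq k$). This gives the corollary. The only genuinely nontrivial point is the second step, namely controlling the trace norm of the truncated Hankel matrix, and I expect the anti-diagonal decomposition $B(g) = \sum_k b_k A_k$ with $\|A_k\|_1 = k+1$ to be the cleanest route to the required bound; alternatively one could replace the sharp cutoff by a smooth one, at the cost of a slightly more involved symbol.
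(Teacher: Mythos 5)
Your proof is correct, and it even produces essentially the same approximating functions as the paper (truncated exponentials $r^k\mathbf{1}_{\{k\le d\}}$ versus the paper's $e^{-k/n}\delta_{\le d_n}$), but the estimates are run on the other side of the correspondence of Theorem \ref{main}. The paper works at the level of cb norms of multipliers: it gets $\|\m_{\psi_t}\|_{\cb}=1$ for $\psi_t(k)=e^{-kt}$ from the second quantization (Corollary \ref{ucp}), truncates with the length projections $P_d$, and kills the tail via $\|\m_{\psi_t}(1-P_d)\|_{\cb}\le\sum_{k\ge d}e^{-kt}\|P_{k+1}-P_k\|_{\cb}$, which requires the linear asymptotics $\|P_d\|_{\cb}\sim\frac{4}{\pi}d$ established in the preceding corollary. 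You instead run everything through the trace-norm formula of Proposition \ref{calcul}: the rank-one identity $B_r=(1-r^2)uu^*$ gives $\|\m_{\varphi_r}\|_{\cb}=\|B_r\|_1=1$ exactly, and the tail is controlled by the anti-diagonal decomposition $B(g)=\sum_k b_kA_k$ with $\|A_k\|_1=k+1$. The two tail estimates encode the same linear-growth phenomenon (indeed $B(\delta_k)=A_k-A_{k-2}$, so the paper's $\|P_{k+1}-P_k\|_{\cb}$ and your $\|A_k\|_1$ are two faces of the same bound), but your route is more self-contained: it needs only Theorem \ref{main} and Proposition \ref{calcul}, bypassing both Corollary \ref{ucp} and the $\|P_d\|_{\cb}$ corollary. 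What the paper's route buys in exchange is that its building blocks $\m_{\psi_t}=\Gamma(e^{-t})$ are unital completely positive, a fact it reuses immediately in the proof of the Haagerup property, whereas your $\m_{\varphi_{r,d}}$ are merely completely bounded with norm tending to $1$ --- which is all the c.m.a.p.\ requires, so the corollary itself is fully proved.
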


\begin{proof}
This is an argument due to Haagerup \cite{haa} (see also
\cite{RicardXu}). Using Corollary \ref{ucp}
below or Theorem \ref{main}, the contraction $H \ni \xi \mapsto e^{-t} \xi \in H$ gives rise
to a unital completely positive multiplier $\m_{\psi_t}$ on $\G$ (for
$t \geq 0$) where $\psi_t(k)=e^{-kt}$. Since
$$\psi_t = \sum_d e^{-dt}\delta_d = \sum_d e^{-dt}(\delta_{\leq d} - \delta_{\leq d - 1}),$$ the polynomial estimate gives that 
$$\limsup_{d\to \infty} \|\m_{\psi_t}(1-P_d)\|_{\cb}  \leq  \limsup_{d \to \infty} \sum_{k \geq d} e^{-kt}\|P_{k + 1} - P_k\|_{\cb} = 0.$$
For every $n \geq 1$, choose $d_n$ large enough so that $\|\m_{\psi_{1/n}}(1 - P_{d_n})\|_{\cb} \leq 1/n$.  The net of the form $\varphi_n = \psi_{1/n} \delta_{\leq d_n}$ satisfies the conclusion of the corollary.
\end{proof}

\subsection{Approximation properties}
We follow a very typical approach. We first establish a second
quantization procedure on free Araki-Woods von Neumann algebras, which generalizes \cite{{shlya97}, {voiculescu85}}. Then,
to get the approximation property, we just need to cut them
with some radial multipliers to get finite rank maps. Let $H$ and $K$ be Hilbert spaces and let $T: H \to K$ be a contraction. We will denote the corresponding first quantization $\mathcal{F}(H) \to \mathcal{F}(K)$ by
$$\tilde{\Gamma}(T) = 1 \oplus \bigoplus_{n \geq 1} T^{\otimes n}.$$

\begin{theo}\label{functor}
Let $H$ and $K$ be Hilbert spaces and $T: H \to K$ be a
contraction. Then there is a unique unital completely positive map
$\Gamma(T) : \Ch \to \mathcal{T}(K)$ such that
\begin{eqnarray*}
& & \Gamma(T)(\ell(h_1) \cdots \ell(h_k) \ell(h_{k+1})^* \cdots \ell(h_{n})^*) = \\
& & \ell(T(h_1)) \cdots \ell(T(h_k)) \ell(T(h_{k+1}))^* \cdots \ell(T(h_{n}))^*
\end{eqnarray*}
for all $h_i \in H$.
\end{theo}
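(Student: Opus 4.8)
The statement splits into uniqueness and existence, and I would treat them separately. Uniqueness is the routine half: using $\ell(f)^*\ell(e)=\langle f,e\rangle$ one rewrites any word in the $\ell(e)$'s and their adjoints as a linear combination of Wick-ordered monomials $\ell(h_1)\cdots\ell(h_k)\ell(h_{k+1})^*\cdots\ell(h_n)^*$, so these monomials span a dense $\ast$-subalgebra of $\mathcal{T}(H)$. Since the prescribed formula already fixes $\Gamma(T)$ on this dense subspace and any unital completely positive map is contractive, hence continuous, the map $\Gamma(T)$ is uniquely determined. The real content is existence, and the plan is to factor the contraction $T$ through two isometries into a common space and then realize $\Gamma(T)$ as a $\ast$-homomorphism followed by a compression.

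Concretely, I would set $L=K\oplus H$, introduce the defect operator $D_T=(1-T^*T)^{1/2}$, and define $V:H\to L$ by $Vh=(Th,D_Th)$ and $W:K\to L$ by $Wk=(k,0)$. Then $V$ and $W$ are isometries (for $V$ this is exactly $\|Th\|^2+\|D_Th\|^2=\|h\|^2$), and $W^*V=T$ since $W^*(a,b)=a$. Applying the universal property of the Toeplitz algebra \cite[Theorem 3.4]{Pim} to the linear map $e\mapsto\ell(Ve)$, which satisfies $\ell(Vf)^*\ell(Ve)=\langle Vf,Ve\rangle=\langle f,e\rangle$ because $V$ is isometric, yields a $\ast$-homomorphism $\Gamma(V):\mathcal{T}(H)\to\mathcal{T}(L)$ with $\Gamma(V)(\ell(e))=\ell(Ve)$. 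On the other side, the first quantization $\tilde{\Gamma}(W):\mathcal{F}(K)\to\mathcal{F}(L)$ is an isometry, so $x\mapsto\tilde{\Gamma}(W)^*x\,\tilde{\Gamma}(W)$ is a unital completely positive map on $\mathbf{B}(\mathcal{F}(L))$. I then define $\Gamma(T):=\tilde{\Gamma}(W)^*\,\Gamma(V)(\cdot)\,\tilde{\Gamma}(W)$, which is unital completely positive as a composition of a $\ast$-homomorphism with a compression by an isometry.

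To check the formula I would evaluate $\Gamma(T)$ applied to a Wick monomial on elementary tensors $\eta=k_1\otimes\cdots\otimes k_m\in\mathcal{F}(K)$. Since $\tilde{\Gamma}(W)\eta=Wk_1\otimes\cdots\otimes Wk_m$, the annihilation operators $\ell(Vh_{k+1})^*,\dots,\ell(Vh_n)^*$ see only the inner products $\langle Vh_j,Wk_i\rangle=\langle Th_j,k_i\rangle$, so they act exactly as $\ell(Th_{k+1})^*\cdots\ell(Th_n)^*$ while keeping the result in the range of $\tilde{\Gamma}(W)$; the creation operators then prepend $Vh_1\otimes\cdots\otimes Vh_k$, and the final $\tilde{\Gamma}(W)^*$ applies $W^*$ to each tensor leg, so that $W^*Vh_i=Th_i$ and $W^*Wk_i=k_i$. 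The direct computation gives exactly $\ell(Th_1)\cdots\ell(Th_k)\ell(Th_{k+1})^*\cdots\ell(Th_n)^*\,\eta$, as required. Finally, the values on Wick monomials lie in $\mathcal{T}(K)$, and since $\Gamma(T)$ is contractive it carries the dense $\ast$-subalgebra, hence all of $\mathcal{T}(H)$, into $\mathcal{T}(K)$.

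The step I expect to be the main obstacle is precisely this verification, because compression by an isometry is not multiplicative: in general $\tilde{\Gamma}(W)^*\Gamma(V)(xy)\tilde{\Gamma}(W)$ does not factor through the compressions of $x$ and $y$, so one cannot simply check the identity generator by generator and multiply. The reason the argument nevertheless goes through is the choice of dilation: the defect component $(0,D_Th_i)$ produced by each creation operator $\ell(Vh_i)$ is orthogonal to the range of $W$, and is therefore annihilated by the outer $W^*$, so the compression lands exactly on the Wick monomials in $K$ rather than on some uncontrolled Stinespring extension. Making this cancellation precise on all tensor lengths simultaneously is where the care is needed, but the explicit Fock-space computation above handles it uniformly.
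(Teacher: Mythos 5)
Your proof is correct, and while it rests on the same two tools as the paper's proof --- Pimsner's universal property of $\mathcal{T}(H)$ and compression by the first quantization of an isometry --- it assembles them through a different factorization of the contraction. The paper writes $T$ as (orthogonal projection)\,$\circ$\,(inclusion)\,$\circ$\,(unitary), first records the functoriality $\Gamma(ST)=\Gamma(S)\Gamma(T)$, and then treats the three factors separately: the unitary and inclusion cases are $\ast$-representations furnished by the universal property, and only the projection case requires a compression $x\mapsto\iota^*x\iota$, verified by a Fock-space computation of exactly the kind you perform. You instead use the defect dilation $T=W^*V$ with two isometries into $L=K\oplus H$, obtain the $\ast$-homomorphism $\Gamma(V)$ from the universal property in one stroke (an isometry already satisfies the Toeplitz relation, so there is no need to split it into a unitary followed by an inclusion), and set $\Gamma(T)=\tilde{\Gamma}(W)^*\,\Gamma(V)(\cdot)\,\tilde{\Gamma}(W)$, checking the formula by a single computation on elementary tensors. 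Your route is more self-contained --- no case analysis and no functoriality lemma --- and it exhibits an explicit Stinespring-type dilation of $\Gamma(T)$; the paper's route isolates the identity $\Gamma(ST)=\Gamma(S)\Gamma(T)$ as a statement of independent use, which it indeed reuses later (for instance $\Gamma(e^{-t}T_k)=\m_{\psi_t}\Gamma(T_k)$ in the proofs of Corollary \ref{finrad} and of the Haagerup property). Your closing diagnosis is also the right one: compression by $\tilde{\Gamma}(W)$ is not multiplicative, and the argument succeeds only because in a Wick-ordered word the annihilation operators pair exclusively against vectors in the range of $W$, giving $\langle Vh_j,Wk_i\rangle=\langle Th_j,k_i\rangle$, while the defect components $(0,D_Th_i)$ introduced by the creation operators are killed by the final leg-by-leg application of $W^*$; this is the same mechanism that makes the paper's projection case work.
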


\begin{proof}
This is again a consequence of the universal property of $\Ch$. It is clear that if 
$\Gamma(T)$ and $\Gamma(S)$ exist then $\Gamma(ST)=\Gamma(S)\Gamma(T)$. 
So by the general form of a contraction, one just needs to prove the result
when $T$ is either an inclusion from $H$ to $K$, or a unitary on $H$,  
or an orthogonal projection from $H$ to $K$.

If $T$ is an inclusion, this is just the universal property of $\Ch$
(note that $\Gamma(T)$ is an injective $*$-representation). We
emphasize that if $H\subset K$ and $h\in H$, then $\ell(h)$ has {\em a
priori} two different meanings as a creation operator on $\mathcal
F(H)$ or $\mathcal F(K)$. The universal property tells us that there is no 
difference at the $C^*$-level.

 If $T$ is a unitary, this is also the universal
property, but in this case $\Gamma(T)$ is nothing but the restriction
of the conjugation by the unitary $\tilde{\Gamma}(T)$ on the full Fock
space $\mathcal{F}(H)$.

If $T$ is an orthogonal projection from $H$ to $K$, we write $j : K\to
H$ for the inclusion. The first quantization $\tilde \Gamma(j)=\iota$
is also an inclusion of $\mathcal{F}(K)$ into
$\mathcal{F}(H)$, the orthogonal projection 
$\iota^*$ is exactly $\tilde \Gamma(T)$. To avoid any confusion, for $k \in K$, write $\ell_K(k) : \mathcal F(K) \to \mathcal F(K)$ for the creation operator on $\mathcal F(K)$ and $\ell_H(k) : \mathcal F(H) \to \mathcal F(H)$ for the creation operator on $\mathcal F(H)$. For
$h\in H$ and $k\in K$, we have $\ell_H(h)^*k = \langle h, k \rangle
\Omega = \langle T(h), k\rangle \Omega= \ell_H(T(h))^*k = \ell_K(T(h))^*k$. This yields
\begin{eqnarray*}
& & \iota^* \ell_H(h_1)\cdots \ell_H(h_k)\ell_H(h_{k+1})^* \cdots \ell_H(h_{n})^*\iota = \\
& & \ell_K(T(h_1)) \cdots \ell_K(T(h_k)) \ell_K(T(h_{k+1}))^*\cdots \ell_K(T(h_{n}))^*. \nonumber
\end{eqnarray*}
Hence $\Gamma(T)(x) = \iota^* x \iota$, for all $x \in \mathcal T(H)$. It is then clear that $\Gamma(T) : \mathcal T(H) \to \mathcal T(K)$ is completely positive.
\end{proof}

We come back to the free Araki-Woods algebras with the notation of the
previous sections. The second quantization is usually stated for maps such that $AT=TA$
which is a somewhat strong assumption \cite{shlya97}. This was the main obstacle to
prove approximation properties for general free Araki-Woods algebras as there
can be no finite rank $T$ satisfying that condition.

\begin{cor}\label{ucp}
Let $T: H \to H$ be a contraction so that $I T I=T$. Then
$\Gamma(T)$ leaves $\GG$ invariant and $\Gamma(T)$ extends to a normal completely positive map on $\G$ so that
$$\Gamma(T)W(\xi)=W(\tilde{\Gamma}(T)\xi), \forall \xi \in \G \Omega.$$
\end{cor}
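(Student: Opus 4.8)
The plan is to deduce the whole statement from the functorial map of Theorem \ref{functor} (applied with $K=H$), whose only extra content here is that it is compatible with the free Araki–Woods structure. First I would isolate the single algebraic consequence of the hypothesis that is actually used. Since $I=I^{-1}$ we have $I^2=\Id$, so $ITI=T$ is equivalent to $IT=TI$ on $\dom(I)$; because $\dom(I)=K_\R+iK_\R$ (with $K_\R$ and $iK_\R$ the $\pm1$ eigenspaces of the involution $I$), this says precisely that $T$ preserves $K_\R+iK_\R$ and that $T\overline e=\overline{Te}$ for every $e\in K_\R+iK_\R$, recalling $\overline e=Ie$. This is weaker than the classical condition $AT=TA$, and it is all the computation needs.

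Next comes the heart of the matter, a direct application of the Wick formula (Lemma \ref{Wick}). For $e_1,\dots,e_n\in K_\R+iK_\R$ I would compute
\begin{align*}
\Gamma(T)\big(W(e_1\otimes\cdots\otimes e_n)\big)
&=\sum_{k=0}^n \Gamma(T)\big(\ell(e_1)\cdots\ell(e_k)\ell(\overline e_{k+1})^*\cdots\ell(\overline e_n)^*\big)\\
&=\sum_{k=0}^n \ell(Te_1)\cdots\ell(Te_k)\ell(\overline{Te_{k+1}})^*\cdots\ell(\overline{Te_n})^*\\
&=W(Te_1\otimes\cdots\otimes Te_n)=W\big(\tilde{\Gamma}(T)(e_1\otimes\cdots\otimes e_n)\big),
\end{align*}
where the second line uses the defining formula of Theorem \ref{functor} together with $T\overline e_j=\overline{Te_j}$, and the third line is the Wick formula read backwards, legitimate because each $Te_j$ again lies in $K_\R+iK_\R$. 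By linearity this shows $\Gamma(T)(\mathcal{W})\subseteq\mathcal{W}$ and $\Gamma(T)W(\xi)=W(\tilde{\Gamma}(T)\xi)$ for all $\xi\in\mathcal{W}\Omega$. Since $\mathcal{W}$ is norm dense in $\GG$ and $\Gamma(T)$ is contractive, I conclude that $\Gamma(T)$ leaves $\GG$ invariant and that the displayed identity persists for every $\xi\in\GG\Omega$.

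Finally, for the normal extension I would first check that $\Gamma(T)$ preserves the free quasi-free state: since $\tilde{\Gamma}(T)^*\Omega=\tilde{\Gamma}(T^*)\Omega=\Omega$, one gets $\chi(\Gamma(T)W(\xi))=\langle\tilde{\Gamma}(T)\xi,\Omega\rangle=\langle\xi,\Omega\rangle=\chi(W(\xi))$ for $\xi\in\GG\Omega$. Thus the $L^2$-implementation of $\Gamma(T)$, namely $W(\xi)\mapsto\tilde{\Gamma}(T)\xi$, is exactly the contraction $\tilde{\Gamma}(T)$ on $\mathcal{F}(H)=L^2(\G,\chi)$. A $\chi$-preserving unital completely positive map defined on the weak-$*$ dense subalgebra $\GG$ and implemented by a bounded operator on the GNS space extends to a normal completely positive map on $\G$ by the standard second-quantization argument of \cite{{shlya97}, {voiculescu85}}; the identity $\Gamma(T)W(\xi)=W(\tilde{\Gamma}(T)\xi)$ then passes to all $\xi\in\G\Omega$ by normality and density.

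The step I expect to be the main obstacle is exactly this normality assertion \emph{under the weaker hypothesis} $ITI=T$. When $AT=TA$, the operator $\tilde{\Gamma}(T)$ commutes with the modular flow $\tilde{\Gamma}(A^{it})=\mathcal{F}(U_t)$, and normality is immediate from modular covariance; here $\Gamma(T)$ need not be modular covariant, so that route is unavailable. The point to make carefully is that the entire computation above used only commutation with the involution $I$, and that state-preservation together with the explicit contractive $L^2$-implementation $\tilde{\Gamma}(T)$ is what survives and must be shown to suffice — concretely, by producing a $\chi$-preserving dilation of $T$ within the class of contractions commuting with $I$ and tracking complete positivity along it, rather than appealing to commutation with the modular group.
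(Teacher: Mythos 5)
Your first two paragraphs are, step for step, the paper's own proof: the paper likewise observes that $ITI=T$ forces $T(\overline{\xi})=\overline{T(\xi)}$ for $\xi\in K_\R+iK_\R$, applies the map $\Gamma(T)$ of Theorem \ref{functor} termwise to the Wick expansion (Lemma \ref{Wick}), reassembles the result as $W(Te_1\otimes\cdots\otimes Te_n)$, and concludes that $\GG$ is invariant by norm density of $\mathcal{W}$. The only point of divergence is the normal extension. The paper dispatches it with the duality argument of Lemma \ref{norext}: $\mathcal{W}$ is norm dense in $\G_*$ via the embedding $x\mapsto\chi(x\,\cdot)$, the map induces a completely bounded map on $\G_*$, and its Banach-space adjoint is the normal extension. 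You argue instead in the GNS picture: $\chi\circ\Gamma(T)=\chi$, the map is implemented on $\mathcal{F}(H)=L^2(\G,\chi)$ by the contraction $\tilde{\Gamma}(T)$, and a unital completely positive map on the weak-$*$ dense subalgebra $\GG$ admitting such an implementation extends normally (Kaplansky approximation plus the fact that $\Omega$ is cyclic for $\G'$ and separating for $\G$ turns bounded $\sigma$-strong approximation into WOT convergence of the images, and $\sigma$-weak continuity on bounded sets gives normality). Both routes are standard and both are correct; yours even has the mild advantage of visibly using nothing beyond contractivity of the implementing operator, whereas transplanting Lemma \ref{norext} verbatim to $\Gamma(T)$ tacitly requires knowing that the adjoint of $\Gamma(T)$ under the pairing $(x,y)\mapsto\chi(xy)$ is again a bounded map on $\GG$ (for radial multipliers this is automatic by self-adjointness of $\m_\varphi$ under the pairing; for $\Gamma(T)$ it is a small extra verification resting on the adjoint identity obtained from $ITI=T$).

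Your final paragraph, however, is a misdiagnosis of your own argument. There is no remaining obstacle: the extension argument of your third paragraph is already complete under the hypothesis $ITI=T$, because it never invokes commutation with the modular group. It uses only three facts, all of which your first two paragraphs establish: (i) $\Gamma(T)$ is unital completely positive on $\GG$ (Theorem \ref{functor} restricted to $\GG$); (ii) $\Gamma(T)(x)\Omega=\tilde{\Gamma}(T)(x\Omega)$ with $\tilde{\Gamma}(T)$ a contraction fixing $\Omega$; (iii) $\Omega$ is cyclic and separating for $\G$. Modular covariance would only be needed if one insisted on extending $\Gamma(T)$ as a state-preserving map intertwining the modular flow, which is not what the statement asks. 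Accordingly, the proposed repair --- ``producing a $\chi$-preserving dilation of $T$ within the class of contractions commuting with $I$ and tracking complete positivity along it'' --- is unnecessary, and as stated it is not an argument but a restatement of the problem. Delete that paragraph and your proof stands, matching the paper except for the (equally valid) choice of soft argument in the last step.
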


\begin{proof}
If $I T I=T$, this implies that for all $\xi \in K_\R + i K_\R$, we have
$T(\overline\xi)=\overline{T(\xi)}$. So by the Wick formula for $e_i$ in $K_\R + i K_\R$, we have
\begin{eqnarray*}
\Gamma(T)W(e_1\otimes \cdots \otimes e_n) & = & \sum_{k=0}^n  
\ell(T(e_{1})) \cdots \ell(T(e_{k})) \ell(T(\overline e_{{k+1}}))^*
\cdots \ell(T(\overline e_{n}))^* \\ 
& = & \sum_{k=0}^n  
\ell(T(e_{1})) \cdots \ell({T(e_{k}))} \ell(\overline {T( e_{{k+1}})})^*
\cdots \ell(\overline{T(e_{n})})^* \\ 
& = & W(T(e_1)\otimes \cdots \otimes T(e_n)).
\end{eqnarray*}
As the set of such elements is linearly dense in $\GG$, we get that
$\GG$ is stable by $\Gamma(T)$. The normal extension is done as in Lemma \ref{norext}.
\end{proof}

\begin{prop}
There is a net of finite rank contractions $(T_k)_{k}$ converging to the 
identity on $H$ pointwise, such that $T_k = I T_k I$, for every $k$. 
\end{prop}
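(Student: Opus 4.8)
The plan is to first turn the awkward constraint $T = ITI$ into a linear condition. Since $I$ is an antilinear involution, for a bounded complex-linear $T$ the identity $ITI = T$ is equivalent to $IT = TI$, hence (as $I$ fixes exactly $K_\R$) to the requirement that $T$ preserve the real subspace $K_\R$; so the admissible $T$ are the complex-linear extensions of bounded \emph{real} operators on $K_\R$. I would record the computational form of this: using $JAJ = A^{-1}$ one checks that $T = ITI$ is equivalent to $JTJ = A^{-1/2}TA^{1/2}$, and that $X \mapsto A^{1/4}XA^{-1/4}$ is a bijection from the real operators (those with $JXJ = X$, i.e.\ complexifications of real operators on $H_\R$) onto the operators commuting with $I$. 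This identity is the backbone of the construction, because it trades $ITI = T$ for the transparent condition $JXJ = X$, at the cost of a conjugation by the (generally unbounded) operator $A^{1/4}$.

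Next I would remove the unboundedness. The spectral projections $E_n = E_{[1/n,n]}(A)$ commute with $A$ and, since $[1/n,n]$ is invariant under $\lambda \mapsto 1/\lambda$, also with $J$; hence they commute with $I$, are contractions, and satisfy $E_n \to 1$ strongly. Inside $E_nH$ the operator $A$ has spectrum in $[1/n,n]$, so $A^{\pm 1/4}$ and $I$ are bounded and the bijection $X \mapsto A^{1/4}XA^{-1/4}$ involves honest bounded operators. It therefore suffices to produce, on each $E_nH$, finite-rank contractions commuting with $I|_{E_nH}$ and converging strongly to $E_n$; extending by $0$ on $(1-E_n)H$ gives $I$-commuting finite-rank contractions on $H$, and a diagonal net over $n$ yields convergence to $1$.

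For the core construction on $E_nH$ I would exploit that conjugation by $A^{1/4}$ is almost isometric on thin spectral slabs. Fix a finite partition of $[1/n,n]$ into symmetric Borel sets $B_j$ with $\sup B_j/\inf B_j \le 1+\delta$, and let $Q_j = E_{B_j}(A)$; each $Q_jH$ is $J$-invariant, hence the complexification of the real space $H_\R \cap Q_jH$, and carries real finite-rank orthogonal projections $X_j$ (onto increasing finite-dimensional real subspaces) with $X_j \to Q_j$ strongly. Setting $X = \sum_j X_j$ and $Y = A^{1/4}XA^{-1/4}$, the operator $Y$ is block-diagonal along $\bigoplus_j Q_jH$, so $\|Y\| = \max_j \|A^{1/4}X_jA^{-1/4}\| \le \max_j (\sup B_j/\inf B_j)^{1/4} \le (1+\delta)^{1/4}$, while $Y$ commutes with $I$ because $X$ is real. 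Then $T = (1+\delta)^{-1/4}Y$ is a finite-rank contraction commuting with $I$, and as $\delta \to 0$ and the real subspaces exhaust each $Q_jH$ one gets $Y \to \sum_j Q_j = E_n$ strongly with $(1+\delta)^{-1/4}\to 1$, so $T \to E_n$.

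The main obstacle is exactly the \emph{contractivity}. Because $I$ is not anti-unitary, the subspaces $K_\R$ and $iK_\R$ are not orthogonal in $H$ (equivalently, conjugation by $A^{1/4}$ distorts the norm), so the complexification of a real contraction on $K_\R$ need not be a contraction, and no finite-rank \emph{idempotent} commuting with $I$ can be contractive unless $A$ has pure point spectrum. This is precisely why one cannot simply use spectral projections in the general case: the construction must use genuinely non-idempotent finite-rank contractions, and the quantitative bound $\|A^{1/4}X_jA^{-1/4}\| \le (1+\delta)^{1/4}$ on thin spectral slabs, followed by the harmless rescaling by $(1+\delta)^{-1/4}$, is the device that reconciles finite rank with norm $\le 1$. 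Checking the strong convergence of the resulting diagonal net is then routine.
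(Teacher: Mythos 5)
Your overall scheme---cutting to the spectral window $\mathbf{1}_{[1/n,n]}(A)H$, slicing the spectrum of $A$ into thin pieces, and repairing the norm by a harmless rescaling---is the same as the paper's, and your linearization of the constraint ($ITI=T$ iff $JTJ=A^{-1/2}TA^{1/2}$, traded for $JXJ=X$ via conjugation by $A^{1/4}$) is correct as far as it goes. But the core construction collapses at the partition step: there is no finite partition of $[1/n,n]$ into Borel sets $B_j$ that are both invariant under $\lambda\mapsto 1/\lambda$ (which is what you need for $Q_jH=\mathbf{1}_{B_j}(A)H$ to be $J$-invariant, since $J\,\mathbf{1}_{B}(A)\,J=\mathbf{1}_{B^{-1}}(A)$) and satisfy $\sup B_j/\inf B_j\le 1+\delta$. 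Indeed, if $\lambda\in B_j$ then $1/\lambda\in B_j$, so $\sup B_j/\inf B_j\ge\max(\lambda,1/\lambda)^2$; hence every such $B_j$ must lie in $[(1+\delta)^{-1/2},(1+\delta)^{1/2}]$, and no collection of them can cover $[1/n,n]$ once $n>(1+\delta)^{1/2}$. The two properties you impose on each block---$J$-symmetry and thinness---are mutually exclusive away from $1$.

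This is not a cosmetic slip, because with the only symmetric blocks actually available, namely pairs $B_j=C_j\cup C_j^{-1}$ with $C_j\subset[1,n]$ thin, your estimate fails for a generic real finite-rank projection: take $\xi$ a unit vector in $\mathbf{1}_{C_j}(A)H$ with $C_j$ near $\lambda$, and let $X_j$ be the (real, rank-one) projection onto $\C(\xi+J\xi)$; then $\|A^{1/4}X_jA^{-1/4}\|$ is of order $\lambda^{1/2}$, not $1+O(\delta)$, because $A^{\pm 1/4}$ amplifies the off-diagonal component mapping the lower slab into the upper one. What is missing is precisely the device the paper uses: work on the paired slabs $\mathbf{1}_{[\lambda,\lambda+\delta]}(A)H\oplus\mathbf{1}_{[1/(\lambda+\delta),1/\lambda]}(A)H$ and allow only operators that are block-diagonal across the pair, namely $P_E\oplus IP_EI$ for $E$ a finite-dimensional subspace of the upper slab (equivalently $P\oplus JPJ$ in your ``real'' picture). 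Then $IP_EI$ differs in norm from $JP_EJ$ by $O(\delta/\lambda)$, the operator $P_E\oplus JP_EJ$ is an honest orthogonal projection, and rescaling by $(1+2\delta/\lambda)^{-1}$ produces the desired contraction commuting with $I$. With that pairing inserted your argument becomes essentially the paper's proof; without it, the central norm estimate is false.
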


\begin{proof}
Let $(\mathbf{1}_{[\lambda,\infty]}(A))_{\lambda\geq 0}$ be the
spectral projections of $A$.
Since $I A I=A^{-1}$, we get
$$I \mathbf{1}_{[\lambda,\infty[}(A)(H)=\mathbf{1}_{[0,1/\lambda]}(A)(H).$$ 
Recall that $I = JA^{-1/2}$ is the polar decomposition of $I$. We also have $JAJ=A^{-1}$ and $J$ is an anti-unitary that sends $\mathbf{1}_{[\lambda,\beta]}(A)(H)$ to
$\mathbf{1}_{[1/\beta,1/\lambda]}(A)(H)$.

Fix $\lambda > 1$ and $0 < \delta < 1$. Take a 
subspace $E$ in $\mathbf{1}_{[\lambda,\lambda+\delta]}(A)(H)$ and
denote by $P$ the orthogonal projection onto $E$. We show that $I P
I$ is almost the orthogonal projection $JPJ$. Indeed, we have $$I P
I = JA^{-1/2}\mathbf{1}_{[\lambda,\lambda+\delta]}(A)P
\mathbf{1}_{[\lambda,\lambda+\delta]}(A)JA^{-1/2}\mathbf{1}_{[\frac
1{\lambda+\delta},\frac 1 \lambda]}(A).$$ 
Moreover
\begin{eqnarray*}
\left\|A^{-1/2}\mathbf{1}_{[\lambda,\lambda+\delta]}(A)-\frac 1 {\sqrt
\lambda}\mathbf{1}_{[\lambda,\lambda+\delta]}(A)\right\|_\infty & \leq & \frac{\delta}{2\sqrt{\lambda}^3} \\
\left\|A^{-1/2}\mathbf{1}_{[\frac 1{\lambda+\delta},\frac 1 \lambda]}(A)-\sqrt
\lambda \mathbf{1}_{[\frac 1{\lambda+\delta},\frac 1 \lambda]}(A)\right\|_\infty & \leq & \frac{\delta}{2\sqrt{\lambda}}.
\end{eqnarray*}
The triangle inequality gives
$$\| I P I -J P J\|_\infty \leq \frac \delta {2\lambda}+ \frac \delta
{2\lambda}+\frac {\delta^2} {4\lambda^2} \leq \frac{2 \delta}{\lambda}.$$ 
Summarizing, for any finite dimensional subspace 
$E\subset \mathbf{1}_{[\lambda,\lambda+\delta]}(A)(H)$ and corresponding
projections $P_E$, $T_E = \frac{1}{1 + \frac{2\delta}{\lambda}}(P_E \oplus I P_E I)$ is a finite rank contraction that satisfies $I T_E I = T_E$ and
$$\|T_E - (P_E \oplus J P_E J)\|_\infty < \frac{4 \delta}{\lambda}.$$
Observe that for operators $S$ and $T$ which have orthogonal left and right supports, we denote the sum $S + T$ by $S \oplus T$.

Take $F$ a finite dimensional subspace of $H$ and fix
$\varepsilon > 0$. Then there exists $n \in \N$ such that for all $f\in F$, we have
$\|\mathbf{1}_{[e^{-n},e^n]}(A)f-f\| \leq (\varepsilon/3) \|f\|$. Set
$\lambda_k=e^{nk/N}$, for $1 \leq k \leq N$ for some large $N$ chosen later.  Let $P_k$
be the orthogonal projection onto $\mathbf{1}_{[\frac 1 {\lambda_{k+1}},
\frac 1 \lambda_k]}(A)(H)\oplus \mathbf{1}_{[\lambda_{k},
\lambda_{k+1}]}(A)(H)$ for $k\geq 1$, and $P_0$ be the projection onto the
eigenspace of $A$ for $1$. Observe that $\frac{\lambda_{k + 1} - \lambda_k}{\lambda_k} = e^{n/N} - 1$.

By the above construction, for each $1 \leq k \leq N$, we can find a finite
rank contraction $T_k$ on $P_k(H)$ such that $I T_k I=T_k$ and for
every $f\in F$, 
$$\|T_k(P_k f) - P_k f \| \leq 4 (e^{n/N}-1) \|P_k f\|.$$ For $k=0$,
as $I$ is an anti-unitary on $P_0(H)$, we take $T_0$ the orthogonal
projection onto $P_0(F)+IP_0(F)$, it satisfies the above properties
with $k=0$.

Set $T=\bigoplus_{k = 0}^N T_k$, which is a finite rank contraction as the $T_k$'s act on
orthogonal subspaces. Moreover $I T I=T$ and for all $f\in F$,
gathering the estimates 
$$\|T(f)-f\|\leq 4 (e^{n/N}-1) \|f\| + \|\mathbf{1}_{[e^{- n/N},e^{n/N}]\backslash\{1\}}(A)f\| +  (2 \varepsilon/3)\|f\|.$$ 
Letting $N \to \infty$, this upper bound can be made smaller than $\varepsilon \|f\|$. So we get the conclusion with a net index by finite dimensional 
subspace of $H$ and $\varepsilon>0$.
\end{proof}

\begin{theo}[Theorem A]
The von Neumann algebra $\G$ has the complete metric approximation property.
\end{theo}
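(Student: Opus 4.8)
The plan is to combine the two main technical inputs that have been assembled in this section: the net of finite rank radial multipliers $\m_{\varphi_n}$ from Corollary \ref{finrad}, and the net of finite rank contractions $(T_k)$ on $H$ satisfying $IT_kI=T_k$ from the previous proposition, which by Corollary \ref{ucp} yield unital completely positive maps $\Gamma(T_k)$ on $\G$. The point is that neither family alone suffices: the multipliers $\m_{\varphi_n}$ are completely contractive in the limit and reduce to a finite length situation, but they are not of finite rank on $\G$ because $H$ may be infinite dimensional; the second quantizations $\Gamma(T_k)$ are completely positive and approximate the identity but are not of finite rank either, since each tensor power $T_k^{\otimes m}$ has infinite rank for $m\geq 1$ even when $T_k$ is finite rank. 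The remedy is to compose them.

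First I would fix $\varepsilon>0$ and a finite set of vectors in $\G$; by density of $\mathcal W$ it suffices to control finitely many elements of the form $W(e_1\otimes\cdots\otimes e_m)$. Next I would choose, using Corollary \ref{finrad}, a radial multiplier $\m_\varphi$ with $\varphi$ finitely supported (say $\varphi(k)=0$ for $k>d$) such that $\|\m_\varphi\|_{\cb}$ is close to $1$ and $\m_\varphi$ is close to the identity on the chosen vectors. The effect of $\m_\varphi$ is that its range is spanned by the $W(e_1\otimes\cdots\otimes e_k)$ with $k\leq d$, i.e.\ tensors of bounded length. Then I would pick, using the previous proposition, a finite rank contraction $T$ with $ITI=T$ close enough to the identity on a finite dimensional subspace containing the relevant vectors, so that $\Gamma(T)$ is unital completely positive with $\|\Gamma(T)\|_{\cb}=1$ and $\Gamma(T)$ approximates the identity on the chosen finite length vectors. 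Finally I would set $\Phi = \m_\varphi\circ\Gamma(T)$ (or $\Gamma(T)\circ \m_\varphi$) and verify the two requirements for the c.m.a.p.

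The key observation making the argument close is that once the length is truncated to $k\leq d$ by $\m_\varphi$, the map $\Gamma(T)$ acts on $W(e_1\otimes\cdots\otimes e_k)$ by replacing each $e_i$ with $T(e_i)$, and since $T$ has finite rank the span of all $W(T(e_1)\otimes\cdots\otimes T(e_k))$ with $k\leq d$ is finite dimensional; hence the composition $\Phi$ is of finite rank. For the completely bounded norm, $\|\Phi\|_{\cb}\leq \|\m_\varphi\|_{\cb}\,\|\Gamma(T)\|_{\cb}=\|\m_\varphi\|_{\cb}\to 1$, using that $\Gamma(T)$ is completely contractive. For $\ast$-strong convergence, on each fixed vector $W(e_1\otimes\cdots\otimes e_m)$ one arranges that $\m_\varphi$ is close to the identity (by $\varphi(k)\to 1$) and $\Gamma(T)$ is close to the identity (by $T\to \Id$ pointwise and continuity of the second quantization on finite length tensors), so $\Phi(W(\xi))\to W(\xi)$ on a dense subset, and uniform boundedness of $\|\Phi\|_{\cb}$ upgrades this to $\ast$-strong convergence on all of $\G$. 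The main obstacle, which has already been overcome by the two preceding results, is precisely that the standard second quantization requires $AT=TA$ and so admits no finite rank $T$; the milder hypothesis $ITI=T$ of Corollary \ref{ucp}, together with the existence of finite rank contractions satisfying it, is exactly what lets the finite rank multiplier and the completely positive second quantization be combined into a single net of finite rank, asymptotically completely contractive maps.
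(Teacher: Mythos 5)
Your proposal is correct and follows essentially the same route as the paper: compose the finite-rank radial multipliers $\m_{\varphi_n}$ of Corollary \ref{finrad} with the second quantizations $\Gamma(T_k)$ of the finite rank contractions satisfying $I T_k I = T_k$ (via Corollary \ref{ucp}), observe that the composition is normal, finite rank, with $\|\m_{\varphi_n} \circ \Gamma(T_k)\|_{\cb} \leq \|\m_{\varphi_n}\|_{\cb} \to 1$, and converges to the identity pointwise $\ast$-strongly. One minor slip in your motivation: if $T_k$ has finite rank then so does each tensor power $T_k^{\otimes m}$ (the reason $\Gamma(T_k)$ alone fails to be finite rank is the infinite direct sum over all lengths $m$, not the individual tensor powers), and this finiteness of each $T_k^{\otimes m}$ is in fact exactly what makes your composition finite rank once the length is truncated.
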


\begin{proof}
Using the contractions of the previous Proposition, the net
$(\Gamma(T_k))_k$ is made of unital completely positive maps which tend
pointwise to the identity. Let $(\m_{\varphi_n})$ be the multipliers from Corollary \ref{finrad}. Since 
$$(\m_{\varphi_n} \circ \Gamma(T_k))(W(e_{\underline{i}})) = \varphi_n(|\underline {i}|)W(\tilde{\Gamma}(T_k)e_{\underline{i}}),$$
the net $(\m_{\varphi_n} \circ \Gamma(T_k))_{n,k}$ are normal finite rank
completely bounded maps which satisfy:
\begin{itemize}
\item $\lim_n \lim_k (\m_{\varphi_n} \circ \Gamma(T_k)) = \Id$ pointwise $\ast$-strongly and
\item $\lim_n \lim_k \|\m_{\varphi_n} \circ \Gamma(T_k)\|_{\cb} = 1$.
\end{itemize} 
The proof is complete.
\end{proof}

There is another approximation property that turns out to be useful. A von Neumann algebra $M$ satisfies the {\it Haagerup property} if there exists a net 
$(u_i)_{i\in I}$ of normal completely positive maps from $M$ to $M$ such that
\begin{enumerate}
\item for all $x\in M$, $u_i(x)\to x$ $\sigma$-weakly.
\item for all $\xi\in L^2(M)$ and $i\in I$ the map $x\mapsto u_i(x)\xi$
is compact from $M$ to $L^2(M)$.
\end{enumerate}

\begin{theo}
The von Neumann algebra $\G$ has the Haagerup property.
\end{theo}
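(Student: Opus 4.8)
The plan is to build the Haagerup net directly from the second quantization functor, but using \emph{compact} first-quantization contractions rather than the scalars $e^{-t}\Id$ used for the c.m.a.p. The Proposition above produces finite rank contractions $T_k : H \to H$ with $I T_k I = T_k$ and $T_k \to \Id$ pointwise. Since scaling by a real number commutes with the conjugate-linear $I$, for each $s > 0$ the operator $e^{-s} T_k$ is again a finite rank contraction with $I(e^{-s}T_k)I = e^{-s}T_k$, now of norm \emph{strictly} less than $1$, and $e^{-s}T_k \to \Id$ pointwise as $s \to 0$ and $k \to \infty$. By Corollary \ref{ucp} each $u_{k,s} := \Gamma(e^{-s}T_k)$ is a normal unital completely positive map on $\G$ satisfying $u_{k,s}(W(\xi)) = W(\tilde{\Gamma}(e^{-s}T_k)\xi)$. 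I would index the resulting net by triples $(F,\varepsilon,s)$, with $F \subset H$ a finite dimensional subspace, so that the contractions converge pointwise to $\Id$.

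Condition $(1)$ is the easy part. Since $e^{-s}T_k \to \Id$ pointwise and the maps are uniformly bounded, $\tilde{\Gamma}(e^{-s}T_k) \to \Id$ strongly on $\mathcal{F}(H) = L^2(\G)$; as $u_{k,s}(x)\Omega = \tilde{\Gamma}(e^{-s}T_k)(x\Omega)$ and the $u_{k,s}$ are contractive, a routine argument (convergence on the total set of vector functionals together with the uniform bound) gives $u_{k,s}(x) \to x$ $\sigma$-weakly.

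The substance is condition $(2)$, and the observation I would isolate is that the $L^2$-realization of $u_{k,s}$ is exactly $\tilde{\Gamma}(e^{-s}T_k) = 1 \oplus \bigoplus_{n \geq 1}(e^{-s}T_k)^{\otimes n}$, and that this operator is \emph{compact} on $\mathcal{F}(H)$: each summand $(e^{-s}T_k)^{\otimes n}$ has finite rank and norm at most $e^{-ns}$, so the series is a norm limit of finite rank operators. To deduce condition $(2)$ I would handle the vectors $\xi = a'\Omega$ with $a' \in \G' = J\G J$: since $a'$ commutes with the left action of $\G$, one has $u_{k,s}(x)\,a'\Omega = a'\,u_{k,s}(x)\Omega = a'\,\tilde{\Gamma}(e^{-s}T_k)(x\Omega)$, so the map $x \mapsto u_{k,s}(x)\,a'\Omega$ factors as (bounded)$\,\circ\,$(compact)$\,\circ\,$(bounded) and is therefore compact. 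As $\Omega$ is cyclic for $\G'$, the vectors $a'\Omega$ are dense in $L^2(\G)$, and for arbitrary $\xi$ we have $\|u_{k,s}(x)(\xi-\xi')\|_2 \leq \|x\|_\infty \|\xi - \xi'\|_2$; hence the general case follows by a uniform (operator-norm) approximation, a uniform limit of compact maps being compact.

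The point to be most careful about is that honest compactness of the second quantization requires the \emph{strict} contraction $\|e^{-s}T_k\| < 1$: finite rank of $T_k$ alone does not suffice, since the tensor powers of a norm-one map do not decay and their direct sum fails to be compact. The other delicate point is the \emph{type ${\rm III}$} nature of $\G$: because $\chi$ is not a trace, right multiplication by a general $a \in \G$ is unbounded on $L^2(\G)$, so one cannot naively pull $u_{k,s}(x)\xi$ through the compact operator for every $\xi$ at once. Using vectors coming from the commutant $\G' = J\G J$ (whose elements are genuinely bounded) rather than from $\G$ itself sidesteps this issue cleanly, which is why I would organize the reduction around $a'\Omega$ with $a' \in \G'$.
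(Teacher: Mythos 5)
Your proof is correct, and while it uses the same net of unital completely positive maps $\Gamma(e^{-s}T_k)$ as the paper, it verifies the compactness condition $(2)$ by a genuinely different route. The paper works at the level of the algebra: it writes $\Gamma(e^{-t}T_k)=\m_{\psi_t}(1-P_d)\Gamma(T_k)+P_d\Gamma(e^{-t}T_k)$, invokes the radial multiplier estimate $\lim_{d\to\infty}\|\m_{\psi_t}(1-P_d)\|_{\cb}=0$ from the proof of Corollary \ref{finrad}, and notes that $P_d\Gamma(e^{-t}T_k)$ has finite rank; hence $\Gamma(e^{-t}T_k)$ is a norm limit of finite rank maps, so it is compact as a map from $\G$ to $\G$, and composing with the bounded map $y\mapsto y\xi$ settles condition $(2)$ for all $\xi\in L^2(\G)$ at once. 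You work instead at the level of the Fock space: $\tilde{\Gamma}(e^{-s}T_k)$ is compact on $\mathcal{F}(H)$ for elementary reasons (finite rank summands of norm at most $e^{-ns}$), and you transport this to the maps $x\mapsto u_{k,s}(x)\xi$ by the commutant trick for $\xi=a'\Omega$, $a'\in\G'$, followed by a uniform approximation in the vector variable, using that each $u_{k,s}$ is a contraction. Both delicate points you flag are real and are handled correctly: strictness of the contraction is exactly what makes the tensor-power sum compact, and passing through $\G'\Omega$ rather than $\G\Omega$ is the right way to dodge the unboundedness of right multiplication in this type ${\rm III}$ setting. Your argument is more elementary and self-contained, since it needs only Corollary \ref{ucp} and the finite rank approximants $T_k$, with no input from the radial multiplier/cb-norm machinery; the paper's argument is heavier but yields a strictly stronger statement, namely that $\Gamma(e^{-t}T_k)$ is compact as an operator from $\G$ to $\G$ (not merely compact after evaluation at each vector), which gives condition $(2)$ uniformly in $\xi$ with no density argument.
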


\begin{proof}
This is just a variation. As above, with the finite
rank maps of the previous Proposition, it is easy to check that
$(\Gamma(e^{-t}T_k))_{t>0,k\in \N}$ is a net of unital completely
positive maps that tends to the identity pointwise with respect to the $\sigma$-weak topology. It remains only to check the second point.

We use the notation of the proof of Corollary \ref{finrad}. We have
$\Gamma(e^{-t})=\m_{\psi_t}$ and 
$$\lim_{d\to \infty} \|\m_{\psi_t}(1-P_d)\|_{\cb}=0.$$ So
$\Gamma(e^{-t}T_k)=\m_{\psi_t}(1-P_d)\Gamma(T_k)+P_d\Gamma(e^{-t}T_k)$,
as $P_d\Gamma(e^{-t}T_k)$ is finite rank, $\Gamma(e^{-t}T_k)$ is a
limit in norm of finite rank operators so is compact from $\G$ to
$\G$. In particular, its composition with the evaluation on a vector
$\xi\in L^2(\G)$ is also compact.
\end{proof}

\section{Malleable deformation on free Araki-Woods factors}\label{malleable}

\subsection{The free malleable deformation}

We first introduce some notation we will be using throughout this section. Let $H_\R$ be a separable real Hilbert space ($\dim H_\R \geq 2$) together with $(U_t)$ an orthogonal representation of $\R$ on $H_\R$. We set: 
\begin{itemize}
\item $\mathcal{M} = \Gamma(H_\R, U_t)''$ the free Araki-Woods factor associated with $(H_\R, U_t)$. Denote by $\chi$ the free quasi-free state and by $\sigma$ the modular group of the state $\chi$. 
 
\item $M = \mathcal{M} \rtimes_\sigma \R$ is the continuous core of $\mathcal{M}$ and $\Tr$ is the semifinite trace associated with the state $\chi$. 

\item Likewise $\widetilde{\mathcal{M}} = \Gamma(H_\R \oplus H_\R, U_t \oplus U_t)''$, $\widetilde{\chi}$ is the corresponding free quasi-free state and $\widetilde{\sigma}$ is the modular group of $\widetilde{\chi}$.

\item $\widetilde{M} = \widetilde{\mathcal{M}} \rtimes_{\widetilde{\sigma}} \R$ is the continuous core of $\widetilde{\mathcal{M}}$ and $\widetilde{\Tr}$ is the semifinite trace associated with $\widetilde{\chi}$.
\end{itemize}
It follows from \cite{shlya97} that
\begin{equation*}
 \widetilde{\mathcal{M}} \cong \mathcal{M} \ast \mathcal{M}.
\end{equation*}
In the latter free product, we shall write $\mathcal{M}_1$ for the first copy of $\mathcal{M}$ and $\mathcal{M}_2$ for the second copy of $\mathcal{M}$. We regard $\mathcal{M} \subset \widetilde{\mathcal{M}}$ via the identification of $\mathcal{M}$ with $\mathcal{M}_1$.

Denote by $(\lambda_t)$ the unitaries in $L(\R)$ that implement the modular action $\sigma$ on $\mathcal{M}$ (resp. $\widetilde{\sigma}$ on $\widetilde{\mathcal{M}}$). Define the following faithful normal conditional expectations:
\begin{itemize}
\item $E : M \to L(\R)$ such that $E(x \lambda_t) = \chi(x) \lambda_t$, for every $x \in \mathcal{M}$ and $t \in \R$;
\item $\widetilde{E} : \widetilde{M} \to L(\R)$ such that $\widetilde{E}(x \lambda_t) = \widetilde{\chi}(x) \lambda_t$, for every $x \in \widetilde{\mathcal{M}}$ and $t \in \R$.
\end{itemize}
Then
\begin{equation*}
\left( \widetilde{M}, \widetilde{E} \right) \cong (M, E) \ast_{L(\R)} (M, E).
\end{equation*}
Likewise, in the latter amalgamated free product, we shall write $M_1$ for the first copy of $M$ and $M_2$ for the second copy of $M$. We regard $M \subset \widetilde{M}$ via the identification of $M$ with $M_1$. Notice that the conditional expectation $E$ (resp. $\widetilde{E}$) preserves the canonical semifinite trace $\Tr$ (resp. $\widetilde{\Tr}$) associated with the state $\chi$ (resp. $\widetilde{\chi}$) (see \cite{ueda}).

Consider the following orthogonal representation of $\R$ on $H_\R \oplus H_\R$:
\begin{equation*}
V_s = \begin{pmatrix}
\cos(\frac{\pi}{2}s) & -\sin(\frac{\pi}{2}s) \\
\sin(\frac{\pi}{2}s) & \cos(\frac{\pi}{2}s)
\end{pmatrix}, \forall s \in \R.
\end{equation*}
Let $(\alpha_s)$ be the natural action on $\left( \widetilde{\mathcal{M}}, \widetilde{\chi} \right)$ associated with $(V_s)$: 
$$\alpha_s = \Gamma(V_s), \forall s \in \R.$$ 
In particular, we have
\begin{equation*}
\alpha_s(W\begin{pmatrix}
\xi \\ \eta \end{pmatrix}) =  W(V_s\begin{pmatrix}
\xi \\ \eta \end{pmatrix}), \forall s \in \R, \forall \xi, \eta \in H_\R,
\end{equation*}
and the action $(\alpha_s)$ is $\widetilde{\chi}$-preserving. We can easily see that the representation $(V_s)$ commutes with the representation $(U_t \oplus U_t)$. Consequently,  $(\alpha_s)$ commutes with modular action $\widetilde{\sigma}$. Moreover, $\alpha_1(x \ast 1) = 1 \ast x$, for every $x \in \mathcal{M}$. At last,  consider the automorphism $\beta$ defined on $\left( \widetilde{\mathcal{M}}, \widetilde{\chi} \right)$ by:
\begin{equation*}
\beta(W\begin{pmatrix}
\xi \\ \eta \end{pmatrix}) = W\begin{pmatrix}
\xi \\ -\eta \end{pmatrix}, \forall \xi, \eta \in H_\R. 
\end{equation*}
It is straightforward to check that $\beta$ commutes with the modular action $\widetilde{\sigma}$, $\beta^2 = \Id$, $\beta_{|\mathcal{M}} = \Id_{\mathcal{M}}$ and $\beta\alpha_{s} = \alpha_{-s}\beta$, $\forall s \in \R$. Since $(\alpha_s)$ and $\beta$ commute with the modular action $\widetilde{\sigma}$, one may extend $(\alpha_s)$ and $\beta$ to $\widetilde{M}$ by ${\alpha_s}_{|L(\R)} = \Id_{L(\R)}$, for every $s \in \R$ and $\beta_{|L(\R)} = \Id_{L(\R)}$. Moreover $(\alpha_s, \beta)$ preserves the semifinite trace $\widetilde{\Tr}$. We summarize what we have done so far:

\begin{prop}
The $\widetilde{\Tr}$-preserving deformation $(\alpha_s, \beta)$ defined on $\widetilde{M} = M \ast_{L(\R)} M$ is \emph{s-malleable}:
\begin{enumerate}
\item ${\alpha_s}_{|L(\R)} = \Id_{L(\R)}$, for every $s \in \R$ and $\alpha_1(x \ast_{L(\R)} 1) = 1 \ast_{L(\R)} x$, for every $x \in M$. 
\item $\beta^2 = \Id$ and $\beta_{|M} = \Id_{M}$. 
\item $\beta \alpha_s = \alpha_{-s} \beta$, for every $s \in \R$.
\end{enumerate}
\end{prop}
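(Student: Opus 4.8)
The plan is to verify all three identities first at the level of the real Hilbert space $H_\R \oplus H_\R$ and the orthogonal transformations involved, and then transport everything to the von Neumann algebra via the functoriality of the second quantization, namely $\Gamma(ST) = \Gamma(S)\Gamma(T)$ and $\Gamma(\Id) = \Id$ (as observed in the proof of Theorem~\ref{functor}). Since $\alpha_s = \Gamma(V_s)$ and $\beta = \Gamma(R)$ with $R = \begin{pmatrix} 1 & 0 \\ 0 & -1 \end{pmatrix}$, each algebraic relation among the $\alpha_s$ and $\beta$ reduces to the corresponding matrix identity among $V_s$ and $R$. Both $V_s$ and $R$ commute with $U_t \oplus U_t$ (for $R$ this is because it is scalar on each summand), so by Corollary~\ref{ucp} they induce $\widetilde{\chi}$-preserving automorphisms of $\widetilde{\mathcal{M}}$ that commute with $\widetilde{\sigma}$.

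For property~(1), I would note that $V_1 = \begin{pmatrix} 0 & -1 \\ 1 & 0 \end{pmatrix}$ sends $\begin{pmatrix} \xi \\ 0 \end{pmatrix}$ to $\begin{pmatrix} 0 \\ \xi \end{pmatrix}$, so on the dense $\ast$-algebra of Wick words one gets $\alpha_1(W\begin{pmatrix}\xi\\0\end{pmatrix}) = W\begin{pmatrix}0\\\xi\end{pmatrix}$, which under the identification $\widetilde{\mathcal{M}} \cong \mathcal{M}_1 \ast \mathcal{M}_2$ reads $\alpha_1(x \ast 1) = 1 \ast x$, and extends to all of $\mathcal{M}$ by normality. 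For property~(2), $R^2 = \Id$ gives $\beta^2 = \Gamma(R^2) = \Id$, and since $R$ fixes $\begin{pmatrix}\xi\\0\end{pmatrix}$ pointwise one has $\beta_{|\mathcal{M}} = \Id$. For property~(3), the key computation is the matrix identity $R V_s R = V_{-s}$, which is immediate; functoriality then yields $\beta \alpha_s \beta = \Gamma(R V_s R) = \Gamma(V_{-s}) = \alpha_{-s}$, and since $\beta = \beta^{-1}$ this is exactly $\beta\alpha_s = \alpha_{-s}\beta$.

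Next I would pass from $\widetilde{\mathcal{M}}$ to its core $\widetilde{M} = \widetilde{\mathcal{M}} \rtimes_{\widetilde\sigma} \R$. Because $\alpha_s$ and $\beta$ commute with the modular group $\widetilde\sigma$, they extend to the crossed product by declaring them to act as the identity on $L(\R)$, as indicated in the discussion preceding the statement. All three relations, already verified on $\widetilde{\mathcal{M}}$ and trivially true on $L(\R)$, then hold on the generated von Neumann algebra $\widetilde{M}$. Finally, trace preservation follows because $\alpha_s$ and $\beta$ preserve the state $\widetilde{\chi}$ and fix $L(\R)$ pointwise, hence fix the dual weight $\widehat{\widetilde\chi}$ and the operator $h_{\widetilde\chi}$ implementing the modular action, so they preserve $\widetilde{\Tr} = \widehat{\widetilde\chi}(h_{\widetilde\chi}^{-1}\,\cdot\,)$.

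The main obstacle, if any, is bookkeeping rather than substance: one must check that the identification $\widetilde{M} \cong (M,E) \ast_{L(\R)} (M,E)$ is compatible with the extensions, so that property~(1) reads correctly as $\alpha_1(x \ast_{L(\R)} 1) = 1 \ast_{L(\R)} x$ for $x \in M$ and not merely for $x \in \mathcal{M}$. This is ensured by the fact that $\alpha_1$ intertwines the two copies of $M$ while fixing the common $L(\R)$, together with the trace-preserving conditional expectations $E, \widetilde{E}$ that make the amalgamated free product well defined; the remaining verifications are then routine.
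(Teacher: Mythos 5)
Your proof is correct and follows essentially the same route as the paper, which presents this proposition merely as a summary of the construction immediately preceding it: the relations are checked at the level of $H_\R \oplus H_\R$ via second quantization (your $\beta = \Gamma(R)$ with $RV_sR = V_{-s}$ is exactly the paper's $\beta$ on Wick words), and the automorphisms are extended to the core $\widetilde{M}$ by acting as the identity on $L(\R)$, using that they commute with $\widetilde{\sigma}$. The only difference is that you fill in details the paper labels as straightforward, namely the functoriality bookkeeping, the compatibility with the identification $\widetilde{M} \cong (M,E) \ast_{L(\R)} (M,E)$, and preservation of $\widetilde{\Tr}$ via the dual weight.
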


Denote by $E_M : \widetilde{M} \to M$ the canonical trace-preserving conditional expectation. Since $\widetilde{\Tr}_{|M} = \Tr$, we will simply denote by $\Tr$ the semifinite trace on $\widetilde{M}$. 
Recall that the s-malleable deformation $(\alpha_s, \beta)$ automatically features a certain {\it transversality property}. 

\begin{prop}[Popa, \cite{popasup}]\label{transversality}
We have the following:
\begin{equation}\label{trans}
\| x - \alpha_{2s}(x) \|_{2, \Tr} \leq 2 \| \alpha_s(x) - (E_{M}\circ \alpha_s)(x) \|_{2, \Tr}, \; \forall x \in L^2(M, \Tr), \forall s > 0.
\end{equation}
\end{prop}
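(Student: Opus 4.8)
The plan is to prove the inequality \eqref{trans} by reducing it to a statement about the one-parameter family $(\alpha_s)$ acting on the Hilbert space $L^2(M,\Tr)$, using the crucial symmetry $\beta\alpha_s=\alpha_{-s}\beta$. First I would recall that, because $(\alpha_s,\beta)$ is $\Tr$-preserving, each $\alpha_s$ extends to a unitary (still denoted $\alpha_s$) on $L^2(M,\Tr)$, and $\beta$ extends to a $\Tr$-preserving involution, hence a self-adjoint unitary, on the same space. The key relations to exploit at the $L^2$-level are $\alpha_0=\Id$, $\beta^2=\Id$, $\beta_{|M}=\Id_M$, and $\beta\alpha_s=\alpha_{-s}\beta$.

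The main computation is to estimate $\|\alpha_s(x)-(E_M\circ\alpha_s)(x)\|_{2,\Tr}$ from below. Fix $x\in L^2(M,\Tr)$. Since $\beta$ fixes $M$ pointwise and commutes with the conditional expectation $E_M$ (both being built from the $\Tr$-preserving structure), I would apply $\beta$ inside the norm: using that $\beta$ is an isometry on $L^2$, $\beta(E_M\circ\alpha_s)(x)=(E_M\circ\beta\alpha_s)(x)=(E_M\circ\alpha_{-s}\beta)(x)=(E_M\circ\alpha_{-s})(x)$, because $\beta x=x$ for $x\in M$. The essential geometric fact is that $E_M$ is the orthogonal projection of $L^2(M,\Tr)$ onto $L^2(M,\Tr)$ inside $L^2(\widetilde M,\widetilde\Tr)$, so for the vector $\alpha_s(x)$ the element $(E_M\circ\alpha_s)(x)$ is its nearest point in $L^2(M)$. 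I would then write
\begin{equation*}
\|x-\alpha_{2s}(x)\|_{2,\Tr}=\|\alpha_{-s}(x)-\alpha_{s}(x)\|_{2,\Tr},
\end{equation*}
valid since $\alpha_{-s}$ is a unitary and $\alpha_{-s}\alpha_{2s}=\alpha_s$. Now insert $(E_M\circ\alpha_s)(x)$ and its $\beta$-image and apply the triangle inequality:
\begin{equation*}
\|\alpha_{-s}(x)-\alpha_s(x)\|_{2,\Tr}\le \|\alpha_s(x)-(E_M\circ\alpha_s)(x)\|_{2,\Tr}+\|(E_M\circ\alpha_s)(x)-\alpha_{-s}(x)\|_{2,\Tr}.
\end{equation*}

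The point that closes the argument is that the second term equals the first: applying the $L^2$-isometry $\beta$ and using $\beta\alpha_{-s}=\alpha_s\beta$ together with $\beta x=x$ gives $\beta\alpha_{-s}(x)=\alpha_s(x)$ and $\beta(E_M\circ\alpha_s)(x)=(E_M\circ\alpha_{-s})(x)$, so that
\begin{equation*}
\|(E_M\circ\alpha_s)(x)-\alpha_{-s}(x)\|_{2,\Tr}=\|(E_M\circ\alpha_{-s})(x)-\alpha_{s}(x)\|_{2,\Tr}\le \|\alpha_s(x)-(E_M\circ\alpha_s)(x)\|_{2,\Tr},
\end{equation*}
where the last inequality uses that $(E_M\circ\alpha_s)(x)$ is the orthogonal projection of $\alpha_s(x)$ onto $L^2(M)$, hence the closest element of $L^2(M)$ to $\alpha_s(x)$, and in particular closer than $(E_M\circ\alpha_{-s})(x)$. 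Combining the two displays yields the factor $2$ and proves \eqref{trans}. The part requiring the most care is the bookkeeping of how $\beta$ interacts with $E_M$ and with $\alpha_{\pm s}$ at the Hilbert-space level; once one checks that $\beta$ is a $\Tr$-preserving involution commuting with $E_M$ and fixing $M$, the rest is a clean application of the triangle inequality and the Pythagorean/nearest-point property of the conditional expectation.
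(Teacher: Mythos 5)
The paper itself does not prove this proposition (it is quoted from Popa's work \cite{popasup}), so your attempt must stand on its own, measured against Popa's original argument. Your skeleton is the right one: pass to $\|\alpha_{-s}(x)-\alpha_s(x)\|_{2,\Tr}$ using the isometry $\alpha_{-s}$, then exploit that $\beta$ is a $\Tr$-preserving involution fixing $L^2(M)$ pointwise and satisfying $\beta\alpha_s=\alpha_{-s}\beta$. However, the justification of your final inequality is wrong. You claim
\begin{equation*}
\|(E_M\circ\alpha_{-s})(x)-\alpha_s(x)\|_{2,\Tr}\;\le\;\|\alpha_s(x)-(E_M\circ\alpha_s)(x)\|_{2,\Tr}
\end{equation*}
on the grounds that $(E_M\circ\alpha_s)(x)$ is the closest element of $L^2(M)$ to $\alpha_s(x)$. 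The nearest-point property gives exactly the opposite comparison: since $(E_M\circ\alpha_{-s})(x)$ is itself an element of $L^2(M)$, it says $\|\alpha_s(x)-(E_M\circ\alpha_s)(x)\|_{2,\Tr}\le\|\alpha_s(x)-(E_M\circ\alpha_{-s})(x)\|_{2,\Tr}$. So, as written, this step proves nothing, and it is the step that produces the factor $2$.

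The inequality you want is nevertheless true --- in fact it is an equality --- and the repair is one line, using only facts you already recorded. Since $\beta$ commutes with $E_M$ and acts as the identity on $L^2(M)$, you have
\begin{equation*}
(E_M\circ\alpha_{-s})(x)=\beta\bigl((E_M\circ\alpha_s)(x)\bigr)=(E_M\circ\alpha_s)(x),
\end{equation*}
the second equality holding because $(E_M\circ\alpha_s)(x)\in L^2(M)$; hence your two distances coincide and no appeal to the nearest-point property is needed. Even more directly, skip the detour through $E_M\circ\alpha_{-s}$ altogether: apply the isometry $\beta$ to the second term of your triangle inequality; it fixes $(E_M\circ\alpha_s)(x)$ and sends $\alpha_{-s}(x)$ to $\alpha_s(x)$, whence $\|(E_M\circ\alpha_s)(x)-\alpha_{-s}(x)\|_{2,\Tr}=\|(E_M\circ\alpha_s)(x)-\alpha_s(x)\|_{2,\Tr}$, and the factor $2$ follows. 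This corrected argument is essentially Popa's: writing $\alpha_s(x)=\xi'+\xi''$ with $\xi'=E_M(\alpha_s(x))$, one has $\alpha_{-s}(x)=\beta(\alpha_s(x))=\xi'+\beta(\xi'')$, so that $\|x-\alpha_{2s}(x)\|_{2,\Tr}=\|\beta(\xi'')-\xi''\|_{2,\Tr}\le 2\|\xi''\|_{2,\Tr}$.
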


\subsection{Locating subalgebras inside the core}

The following theorem is in some ways reminiscent of a result by Ioana, Peterson and Popa, namely \cite[Theorem 4.3]{ipp} (see also \cite[Theorem 4.2]{houdayer4} and \cite[Theorem 3.4]{houdayer6}). 

\begin{theo}\label{uniform}
Let $\mathcal{M} = \Gamma(H_\R, U_t)''$ and $M = \mathcal{M} \rtimes_\sigma \R$ be as above. Let $p \in L(\R) \subset M$ be a nonzero projection such that $\Tr(p) < \infty$. Let $P \subset pMp$ be a von Neumann subalgebra such that the deformation $(\alpha_t)$ converges uniformly in $\| \cdot \|_{2, \Tr}$ on $\mathcal{U}(P)$. Then $P \preceq_M L(\R)$.
\end{theo}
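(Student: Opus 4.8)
The plan is to run Popa's deformation/rigidity machine: out of the uniform convergence of $(\alpha_s)$ on $\mathcal{U}(P)$ I would extract a nonzero intertwiner by a convexity argument, and then transport it across the amalgamated free product $\widetilde{M} = M \ast_{L(\R)} M$ until it is absorbed into the amalgam $L(\R)$. Throughout, every instance of $\preceq$ would be read off the semifinite trace $\Tr$ via the equivalences in Lemma \ref{intertwining}.

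\emph{Producing an intertwiner.} First I would use uniform convergence to fix a single $s_0 > 0$ small enough that $\delta := \sup_{u \in \mathcal{U}(P)} \|\alpha_{s_0}(u) - u\|_{2,\Tr} < \|p\|_{2,\Tr}$. All the elements $u\,\alpha_{s_0}(u)^*$, for $u \in \mathcal{U}(P)$, lie in the finite corner $p \widetilde{M} \alpha_{s_0}(p)$, on which $\|\cdot\|_{2,\Tr}$ is a genuine Hilbert-space norm, and I would let $a$ be the unique element of minimal $\|\cdot\|_{2,\Tr}$-norm in the closed convex hull $\mathcal{C}$ of $\{u\,\alpha_{s_0}(u)^* : u \in \mathcal{U}(P)\}$. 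Each map $\xi \mapsto u\,\xi\,\alpha_{s_0}(u)^*$ is a $\|\cdot\|_{2,\Tr}$-isometry permuting the generators of $\mathcal{C}$, hence preserves $\mathcal{C}$; by uniqueness of the minimal-norm vector it fixes $a$, giving $u\,a = a\,\alpha_{s_0}(u)$ for every $u \in \mathcal{U}(P)$. Since $\|u\,\alpha_{s_0}(u)^* - p\|_{2,\Tr} = \|\alpha_{s_0}(u) - u\|_{2,\Tr} \leq \delta$, the whole of $\mathcal{C}$ stays within $\delta < \|p\|_{2,\Tr}$ of $p$, so $a \neq 0$. Passing to the polar parts of $a$ then yields $P \preceq_{\widetilde{M}} \alpha_{s_0}(P)$.

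\emph{From $\alpha_{s_0}$ to $\alpha_1$, then into $L(\R)$.} Next I would upgrade the intertwining from the small time $s_0$ to time $1$ by the malleability bootstrap. Flowing $a$ gives $a\,\alpha_{s_0}(a)$, which formally intertwines $P$ into $\alpha_{2s_0}(P)$; the transversality inequality of Proposition \ref{transversality}, together with the symmetry $\beta$ (for which $\beta_{|M} = \Id$ and $\beta\alpha_s = \alpha_{-s}\beta$), is what would guarantee that this doubled intertwiner is again nonzero, so that the doubling can be iterated finitely many times up to time $1$. Because $\alpha_1(x \ast_{L(\R)} 1) = 1 \ast_{L(\R)} x$, this produces $P \preceq_{\widetilde{M}} M_2$, where $M_2$ is the second free copy of $M$. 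Finally I would transfer this through the free product: $P \subset M_1$ while $P \preceq_{\widetilde{M}} M_2$, and $M_1, M_2$ are free over $L(\R)$; the standard amalgamated-free-product rigidity (a partial intertwiner carrying a corner of one free factor into the other is forced, by freeness, to factor through the common subalgebra) then gives $P \preceq_{M_1} L(\R)$, that is, $P \preceq_M L(\R)$.

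\emph{Main obstacle.} The hard part will be the middle step, namely making the malleability bootstrap rigorous in the present \emph{semifinite}, type ${\rm II}_\infty$ framework. One must carry out the doubling of intertwiners while keeping all projections $\Tr$-finite and reading each occurrence of $\preceq$ through $\Tr$ via Lemma \ref{intertwining}, and one must verify that both the convexity/minimal-norm selection and the crucial nonvanishing $a \neq 0$ persist with the non-normalized trace on the finite corner. The free-product absorption of the last step, executed in the semifinite rather than the tracial setting, is the other point requiring genuine care.
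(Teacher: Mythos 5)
Your first step (the convexity/minimal-norm selection in the finite corner, giving a nonzero partial isometry $v$ with $xv = v\alpha_{s_0}(x)$ for $x \in P$) is correct and is exactly what the paper does. The genuine gap is in your middle step, and it is not merely a matter of ``making the bootstrap rigorous'': your proposed mechanism for the nonvanishing of the doubled intertwiner is wrong, and the direct-proof framing you chose cannot supply the ingredient that is actually needed. Concretely, the paper doubles the intertwiner by setting $w = \alpha_s(\beta(v^*)v)$, and the computation
\begin{equation*}
ww^* \;=\; \alpha_s\bigl(\beta(v^*)\, vv^* \,\beta(v)\bigr) \;=\; \alpha_s\bigl(\beta(v^*)\, \beta(vv^*)\, \beta(v)\bigr) \;=\; \alpha_s\beta(v^*v) \;\neq\; 0
\end{equation*}
is valid only because $\beta(vv^*) = vv^*$, i.e.\ because $vv^* \in pMp$ (recall $\beta$ is the identity only on $M$, not on $\widetilde{M}$). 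Since $vv^*$ lies a priori only in $P' \cap p\widetilde{M}p$, one needs the relative commutant control $P' \cap p\widetilde{M}p \subset pMp$, and this is obtained from \cite[Theorem 2.4]{houdayer4} precisely under the hypothesis $P \npreceq_M L(\R)$. This forces the entire argument to be run by contradiction: assume $P \npreceq_M L(\R)$, use that assumption to make each doubling step produce a nonzero intertwiner (and again, at time $1$, to place $v^*v$ inside $\alpha_1(pMp) \subset pM_2p$), and finally use it a third time to kill the intertwiner into $M_2$ and reach the contradiction. Your attribution of the nonvanishing to ``the transversality inequality of Proposition \ref{transversality}, together with the symmetry $\beta$'' is a misdiagnosis: transversality plays no role in this theorem at all; it is used only in Corollary \ref{uniformcorollary} to convert failure of uniform convergence into a quantitative estimate. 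Without the contradiction hypothesis there is no control whatsoever on $vv^*$, and $\beta(v^*)v$ can perfectly well vanish.

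Your final step (free-product absorption: an intertwiner from $P \subset M_1$ into $M_2$ must factor through the amalgam $L(\R)$) is correct in spirit and corresponds to the contrapositive of what the paper proves in its Claim: if $P \npreceq_M L(\R)$, with witnessing unitaries $(u_k)$, then $\lim_k \|E_{M_2}(a^* u_k b)\|_{2,\Tr} = 0$ for all $a, b \in \widetilde{M}$, which forces $\|v^*v\|_{2,\Tr} = \|E_{M_2}(v^*u_k v)\|_{2,\Tr} \to 0$. So this piece, too, is naturally executed inside the same proof by contradiction rather than as a separate ``standard rigidity'' citation; in the semifinite setting the paper proves it by hand via a Kaplansky-density argument over the $\Tr$-finite corner. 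In short: the skeleton of your plan matches the paper, but the one idea that makes the bootstrap work --- running everything under the assumption $P \npreceq_M L(\R)$ so that the relative commutant stays inside $pMp$ --- is missing, and the ingredient you substituted for it does not do the job.
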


\begin{proof}
Let $p \in L(\R)$ be a nonzero projection such that $\Tr(p) < \infty$. Let $P \subset pMp$ be a von Neumann subalgebra such that $(\alpha_t)$ converges uniformly in $\|\cdot\|_{2, \Tr} $ on $\mathcal{U}(P)$. We keep the notation introduced previously and regard $M \subset \widetilde{M} = M_1 \ast_{L(\R)} M_2$ via the identification of $M$ with $M_1$. Recall that ${\alpha_s}_{|L(\R)} = \Id_{L(\R)}$, for every $s \in \R$. In particular, $\alpha_s(p) = p$, for every $s \in \R$.

{\bf Step (1) : Using the uniform convergence on $\mathcal{U}(P)$ to find $t > 0$ and a nonzero intertwiner $v$ between $\Id$ and $\alpha_t$.}

The first step uses a standard functional analysis trick. Let $\varepsilon = \frac{1}{2}\left\| p \right\|_{2, \Tr}$. We know that there exists $s = 1/2^k$ such that $\forall u \in \mathcal{U}(P)$, 
\begin{equation*}
\| u - \alpha_s(u) \|_{2, \Tr} \leq \frac{1}{2}\| p \|_{2, \Tr},
\end{equation*}
Thus, $\forall u \in \mathcal{U}(P)$, we have
\begin{eqnarray*}
\| u^*\alpha_s(u) - p \|_{2, \Tr} & = & \| u^*(\alpha_s(u) - u) \|_{2, \Tr} \\
& \leq & \| u - \alpha_s(u) \|_{2, \Tr} \\
& \leq & \frac{1}{2} \| p \|_{2, \Tr}.
\end{eqnarray*}
Denote by $\mathcal{C} = \overline{\co}^w \{u^*\alpha_s(u) : u \in \mathcal{U}(P)\} \subset p L^2(\widetilde{M})p$ the ultraweak closure of the convex hull of all $u^*\alpha_s(u)$, where $u \in \mathcal{U}(P)$. Denote by $a$ the unique element in $\mathcal{C}$ of minimal $\| \cdot \|_{2, \Tr}$-norm. Since $\| a - p \|_{2, \Tr} \leq 1/2 \| p \|_{2, \Tr}$, necessarily $a \neq 0$. Fix $u \in \mathcal{U}(P)$. Since $u^* a \alpha_s(u) \in \mathcal{C}$ and $\| u^* a \alpha_s(u) \|_{2, \Tr} = \| a \|_{2, \Tr}$, necessarily $u^* a \alpha_s(u) = a$. Taking $v = \pol(a)$ the polar part of $a$, we have found a nonzero partial isometry $v \in p\widetilde{M}p$ such that
\begin{equation}\label{specgap}
x v = v \alpha_s (x), \forall x \in P.
\end{equation}
Note that $vv^* \in P' \cap p\widetilde{M}p$ and $v^*v \in \alpha_s(P)' \cap p\widetilde{M}p$.

{\bf Step (2) : Proving $P \preceq_M L(\R)$ using the malleability of $(\alpha_t, \beta)$.} The rest of the proof, is very similar to the reasoning in \cite[Lemma 4.8, Theorem 6.1]{popamsri},  \cite[Theorem 4.1]{popamal1} and \cite[Theorem 4.3]{ipp} (see also \cite[Theorem 5.6]{houdayer3} and \cite[Theorem 3.4]{houdayer6}). For the sake of completeness, we will give a detailed proof.

By contradiction, assume $P \npreceq_M L(\R)$. The first task is to lift Equation $(\ref{specgap})$ to $s = 1$. Note that it is enough to find a nonzero partial isometry $w \in p\widetilde{M}p$ such that
\begin{equation*}
x w = w \alpha_{2s} (x), \forall x \in P.
\end{equation*}
Indeed, by induction we can go till $s = 1$ (because $s = 1/2^k$). Recall that $\beta(z) = z$, for every $z \in M$. Recall that $vv^* \in P' \cap p\widetilde{M}p$. Since $P \npreceq_{M} L(\R)$, we know from \cite[Theorem 2.4]{houdayer4} that $P' \cap p\widetilde{M}p \subset pMp$. In particular, $vv^* \in pMp$. Set $w = \alpha_s(\beta(v^*)v)$. Then, 
\begin{eqnarray*}
ww^* & = & \alpha_s(\beta(v^*) vv^* \beta(v)) \\
& = & \alpha_s(\beta(v^*) \beta(vv^*) \beta(v)) \\
& = & \alpha_s \beta(v^*v) \neq 0.
\end{eqnarray*}
Hence, $w$ is a nonzero partial isometry in $p\widetilde{M}p$. Moreover, for every $x \in P$,
\begin{eqnarray*}
w \alpha_{2s}(x) & = & \alpha_s(\beta(v^*) v \alpha_s(x)) \\
& = & \alpha_s(\beta(v^*) x v) \\
& = & \alpha_s(\beta(v^*x)v) \\
& = & \alpha_s(\beta(\alpha_s(x)v^*)v) \\
& = & \alpha_s\beta\alpha_s(x) \alpha_s(\beta(v^*)v) \\
& = & \beta(x) w \\
& = & xw.
\end{eqnarray*}

Since by induction, we can go till $s = 1$, we have found a nonzero partial isometry $v \in p\widetilde{M}p$ such that
\begin{equation}\label{inter}
xv = v\alpha_1(x), \forall x \in P.
\end{equation}
Note that $v^*v \in \alpha_1(P)' \cap pMp$. Moreover, since $\alpha_1 : p\widetilde{M}p \to p\widetilde{M}p$ is a $\ast$-automorphism, and $P \npreceq_{M} L(\R)$, \cite[Theorem 2.4]{houdayer4} gives
\begin{eqnarray*}
\alpha_1(P)' \cap p\widetilde{M}p & = & \alpha_1\left( P' \cap p\widetilde{M}p \right) \\
& \subset & \alpha_1(pMp).
\end{eqnarray*}
Hence $v^*v \in \alpha_1(pMp)$.

Since $P \npreceq_M L(\R)$, we know that there exists a sequence of unitaries $(u_k)$ in $P$ such that $\lim_{k} \|E_{L(\R)}(x^* u_k y)\|_{2, \Tr} \to 0$, for any $x, y \in M$. We need to go further and prove the following:

\begin{claim}\label{esperance}
$\forall a, b \in \widetilde{M}, \lim_k \| E_{M_2}(a^* u_k b) \|_{2, \Tr} = 0$.
\end{claim}

\begin{proof}[Proof of Claim $\ref{esperance}$]
Let $a, b \in (\widetilde{M})_1$ be either elements in $L(\R)$ or reduced words with letters alternating from $M_1 \ominus L(\R)$ and $M_2 \ominus L(\R)$. Write $b = y b'$ with
\begin{itemize}
\item $y = b$ if $b \in L(\R)$;
\item $y = 1$ if $b$ is a reduced word beginning with a letter from $M_2 \ominus L(\R)$;
\item $y =$ the first letter of $b$ coming from $M_1 \ominus L(\R)$ otherwise. 
\end{itemize}
Note that either $b' = 1$ or $b'$ is a reduced word beginning with a letter from $M_2 \ominus L(\R)$.  Likewise write $a = a' x$ with
\begin{itemize}
\item $x = a$ if $x \in L(\R)$;
\item $x = 1$ if $a$ is a reduced word ending with a letter from $M_2 \ominus L(\R)$;
\item $x =$ the last letter of $a$ coming from $M_1 \ominus L(\R)$ otherwise. 
\end{itemize}
Either $a' = 1$ or $a'$ is a reduced word ending with a letter from $M_2 \ominus L(\R)$. For any $z \in  M_1$, $xzy - E_{L(\R)}(xzy) \in M_1 \ominus L(\R)$, so that
\begin{equation*}
E_{M_2} (a z b) = E_{M_2}(a' E_{L(\R)}(x z y) b').
\end{equation*}
Since $\lim_k \|E_{L(\R)}(x u_k y)\|_{2, \Tr} = 0$, it follows that $\lim_k \|E_{M_2}(a u_k b)\|_{2, \Tr} = 0$ as well. Note that
\begin{equation*}
\mathcal{A}:= \mbox{span} \left\{ L(\R), (M_{i_1} \ominus L(\R)) \cdots (M_{i_n} \ominus L(\R)) : n \geq 1, i_1 \neq \cdots \neq i_n \right\}
\end{equation*}
is a unital $\ast$-strongly dense $\ast$-subalgebra of $\widetilde{M}$. What we have shown so far is that for any $a, b \in \mathcal{A}$, $\|E_{M_2}(a u_k b)\|_{2, \Tr} \to 0$, as $k \to \infty$. Let now $a, b \in (\widetilde{M})_1$. By Kaplansky density theorem, let $(a_i)$ and $(b_j)$ be sequences in $(\mathcal{A})_1$ such that $a_i \to a$ and $b_j \to b$ $\ast$-strongly. Recall that $(u_k)$ is a sequence in $P \subset p \widetilde{M} p$ with $\Tr(p) < \infty$. We have
\begin{eqnarray*}
\| E_{M_2}(a u_k b)\|_{2, \Tr} & \leq & \| E_{M_2}(a_i u_k b_j)\|_{2, \Tr} + \| E_{M_2}(a_i u_k (b - b_j)) \|_{2, \Tr} \\
& & + \| E_{M_2}((a - a_i) u_k b_j) \|_{2, \Tr} + \| E_{M_2}((a - a_i) u_k (b - b_j)) \|_{2, \Tr} \\
& \leq & \| E_{M_2}(a_i u_k b_j) \|_{2, \Tr} + \| a_i u_k p(b - b_j) \|_{2, \Tr} \\
& & + \| (a - a_i) pu_k b_j \|_{2, \Tr} + \| (a - a_i) u_k p(b - b_j) \|_{2, \Tr} \\
& \leq & \|E_{M_2}(a_i u_k b_j)\|_{2, \Tr} + 2 \|p (b - b_j) \|_{2, \Tr} + \| (a - a_i)p \|_{2, \Tr}  \\
\end{eqnarray*}
Fix $\varepsilon > 0$. Since $a_i \to a$ and $b_j \to b$ $\ast$-strongly, let $i_0, j_0$ large enough such that 
\begin{equation*}
2 \|p(b - b_{j_0}) \|_{2, \Tr} + \| (a - a_{i_0})p\|_{2, \Tr} \leq \varepsilon/2.
\end{equation*}
Now let $k_0 \in \N$ such that for any $k \geq k_0$,  
\begin{equation*}
\| E_{M_2}(a_{i_0} u_k b_{j_0})\|_{2, \Tr} \leq \varepsilon/2.
\end{equation*}
We finally get $\| E_{M_2}(a u_k b) \|_{2, \Tr} \leq \varepsilon$, for any $k \geq k_0$, which finishes the proof of the claim. 
\end{proof}

Recall that for any $x \in P$, $v^*xv = \alpha_1(x)v^*v$, by Equation $(\ref{inter})$. Moreover, $v^*v \in \alpha_1(pMp) \subset pM_2p$. So, for any $x \in P$, $v^*xv \in pM_2p$. Since $\alpha_1(u_k) \in \mathcal{U}(pM_2p)$, we get
\begin{eqnarray*}
\| v^*v \|_{2, \Tr}  & = & \| \alpha_1(u_k) v^*v \|_{2, \Tr} \\
& = & \| E_{M_2}(\alpha_1(u_k) v^*v) \|_{2, \Tr} \\
& = & \| E_{M_2}(v^* u_k v) \|_{2, \Tr} \to 0.
\end{eqnarray*}
Thus $v = 0$, which is a contradiction. 
\end{proof}

\begin{cor}\label{uniformcorollary}
Let $\mathcal{M} = \Gamma(H_\R, U_t)''$ and $M = \mathcal{M} \rtimes_\sigma \R$ be as above. Let $p \in L(\R) \subset M$ be a nonzero projection such that $\Tr(p) < \infty$. Let $P \subset pMp$ be a von Neumann subalgebra such that $P \npreceq_M L(\R)$. Then there exist $0 < \kappa < 1$, a sequence $(t_k)$ of positive reals and a sequence $(u_k)$ of unitaries in $\mathcal{U}(P)$ such that $\lim_k t_k = 0$ and $\|(E_M \circ \alpha_{t_k})(u_k)\|_{2, \Tr} \leq \kappa \|p\|_{2, \Tr}$, for every $k \in \N$. 
\end{cor}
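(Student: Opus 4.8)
The plan is to read Corollary \ref{uniformcorollary} as the quantitative contrapositive of Theorem \ref{uniform}: the hypothesis $P \npreceq_M L(\R)$ forces the deformation $(\alpha_t)$ to fail to converge uniformly on $\mathcal{U}(P)$, and the transversality property together with a Pythagorean identity then converts this failure into the desired smallness of $E_M \circ \alpha_{t_k}$ on suitable unitaries.

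First I would invoke Theorem \ref{uniform} in contrapositive form. Since $P \npreceq_M L(\R)$, the deformation $(\alpha_t)$ cannot converge uniformly in $\|\cdot\|_{2,\Tr}$ on $\mathcal{U}(P)$; that is, the quantity $\sup_{u \in \mathcal{U}(P)} \|u - \alpha_t(u)\|_{2,\Tr}$ does not tend to $0$ as $t \to 0^+$. Negating this statement carefully yields a constant $\delta > 0$ and a sequence $s_k \searrow 0$ such that $\sup_{u \in \mathcal{U}(P)} \|u - \alpha_{s_k}(u)\|_{2,\Tr} \geq 2\delta$ for every $k$; choosing for each $k$ a unitary that nearly attains this supremum produces $u_k \in \mathcal{U}(P)$ with $\|u_k - \alpha_{s_k}(u_k)\|_{2,\Tr} \geq \delta$.

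Next I set $t_k = s_k/2$, so that $s_k = 2t_k$ and $t_k \to 0$. Applying Proposition \ref{transversality} with $x = u_k$ and $s = t_k$ gives
$$\|u_k - \alpha_{2t_k}(u_k)\|_{2,\Tr} \leq 2\|\alpha_{t_k}(u_k) - (E_M \circ \alpha_{t_k})(u_k)\|_{2,\Tr},$$
whence $\|\alpha_{t_k}(u_k) - (E_M \circ \alpha_{t_k})(u_k)\|_{2,\Tr} \geq \delta/2$. Because $E_M : \widetilde{M} \to M$ is the trace-preserving conditional expectation, it implements the orthogonal projection of $L^2(\widetilde{M}, \Tr)$ onto $L^2(M, \Tr)$, so the Pythagorean identity applies; combining it with the fact that $\alpha_{t_k}$ is a $\Tr$-preserving $\ast$-automorphism fixing $p$ (so that $\|\alpha_{t_k}(u_k)\|_{2,\Tr} = \|p\|_{2,\Tr}$, using $u_k^* u_k = p$) yields
$$\|(E_M \circ \alpha_{t_k})(u_k)\|_{2,\Tr}^2 = \|p\|_{2,\Tr}^2 - \|\alpha_{t_k}(u_k) - (E_M \circ \alpha_{t_k})(u_k)\|_{2,\Tr}^2 \leq \|p\|_{2,\Tr}^2 - \frac{\delta^2}{4}.$$
Setting $\kappa = \sqrt{1 - \delta^2/(4\|p\|_{2,\Tr}^2)}$ — and replacing it by any value in $(0,1)$ should it degenerate to $0$ — gives a constant $0 < \kappa < 1$ satisfying $\|(E_M \circ \alpha_{t_k})(u_k)\|_{2,\Tr} \leq \kappa \|p\|_{2,\Tr}$ for all $k$, as required.

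The only genuinely delicate point is the first step: one must phrase the hypothesis of Theorem \ref{uniform} precisely as the \emph{uniform} convergence statement $\sup_{u \in \mathcal{U}(P)} \|u - \alpha_t(u)\|_{2,\Tr} \to 0$ and negate it correctly, so as to extract a single gap $\delta$ valid along an entire sequence $t_k \to 0$ rather than a merely $t$-dependent gap. Everything after that — the transversality estimate and the Pythagorean bookkeeping — is routine, with the normalization $\|\alpha_{t_k}(u_k)\|_{2,\Tr} = \|p\|_{2,\Tr}$ being immediate from trace preservation.
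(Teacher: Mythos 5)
Your proof is correct and follows essentially the same route as the paper's: the contrapositive of Theorem \ref{uniform} to get failure of uniform convergence on $\mathcal{U}(P)$, then Proposition \ref{transversality} to transfer the gap to $\|\alpha_{t_k}(u_k) - (E_M\circ\alpha_{t_k})(u_k)\|_{2,\Tr}$, and finally Pythagoras (using that $E_M$ is the orthogonal projection onto $L^2(M)$ and $\|\alpha_{t_k}(u_k)\|_{2,\Tr}=\|p\|_{2,\Tr}$) to produce $\kappa=\sqrt{1-c^2}$. Your extra care with the quantifiers in negating uniform convergence and with the degenerate case $\kappa=0$ only makes explicit what the paper leaves implicit.
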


\begin{proof}
Assume $P \npreceq_M L(\R)$. Using Theorem \ref{uniform}, we obtain that the deformation $(\alpha_t)$ does not converge uniformly on $\mathcal{U}(P)$. Combining this with Inequality $(\ref{trans})$ in Proposition \ref{transversality}, we get that there exist $0 < c < 1$, a sequence of positive reals $(t_k)$ and a sequence of unitaries $(u_k)$ in $\mathcal{U}(P)$ such that $\lim_{k} t_k = 0$ and $\| \alpha_{t_k}(u_k) - (E_M \circ \alpha_{t_k})(u_k) \|_{2, \Tr} \geq c\| p \|_{2, \Tr}$, $\forall k \in \N$. Since $\|\alpha_{t_k}(u_k)\|_{2, \Tr} = \| p \|_{2, \Tr}$, by Pythagora's theorem we obtain
\begin{equation*}
\|(E_M \circ \alpha_{t_k})(u_k)\|_{2, \Tr} \leq \kappa \| p \|_{2, \Tr}, \forall k \in \N.
\end{equation*}
where $\kappa = \sqrt{1 - c^2}$.
\end{proof}

\begin{rem}
Assume in Theorem $\ref{uniform}$ that the free Araki-Woods factor $\mathcal{M} = \Gamma(H_\R, U_t)''$ is a type ${\rm III_1}$ factor so that the core $M = \mathcal{M} \rtimes_\sigma \R$ is a type ${\rm II_\infty}$ factor. Then the $\Tr$-finite projection $p \in L(\R)$ can be replaced by {\em any} $\Tr$-finite projection in $M$. Indeed let $q \in M$ be a $\Tr$-finite projection. Since $M$ is a type ${\rm II_\infty}$ factor, $L(\R)$ is diffuse and $\Tr_{| L(\R)}$ is semifinite, we may find a projection $p \in L(\R)$ and a unitary $u \in \mathcal{U}(M)$ such that $u p u^* = q$.
\end{rem}

\section{Structural results: proofs of Theorems B and D}\label{theoBD}

\subsection{Weak containment of bimodules}

Let $M, N, P$ be any von Neumann algebras. For any $M, N$-bimodules $H, K$, denote by $\pi_H$ (resp. $\pi_K$) the associated $\ast$-representation of the algebraic tensor product $M \odot N^{\op}$ on $H$ (resp. on $K$). We say that $H$ is {\em weakly contained} in $K$ and denote it by $H \subset_{\weak} K$ if $\| \pi_H(T) \|_\infty \leq \| \pi_K(T) \|_\infty$, for every $T \in M \odot N^{\op}$. Recall that $H \subset_{\weak} K$ if and only if $H$ lies in the
closure (for the Fell topology) of all finite direct sums of copies of $K$. Let $H, K$ be $M, N$-bimodules. The following are true:

\begin{enumerate}
\item Assume that $H \subset_{\weak} K$. Then, for any $N, P$-bimodule $L$, we have $H \otimes_N L \subset_{\weak} K \otimes_N L$, as $M, P$-bimodules. Likewise, for any $P, M$-bimodule $L$, we have $L \otimes_M H \subset_{\weak} L \otimes_M K$, as $P, N$-bimodules (see \cite[Lemma 1.7]{anan95}).

\item A von Neumann algebra $B$ is amenable if and only if $L^2(B) \subset_{\weak} L^2(B) \otimes L^2(B)$, as $B, B$-bimodules.
\end{enumerate}

Let $B, M, N$ be von Neumann algebras such that $B$ is amenable. Let $H$ be any $M, B$-bimodule and let $K$ be any $B, N$-bimodule. Then, as $M, N$-bimodules, we have $H \otimes_B K \subset_{\weak} H \otimes K$ (straightforward consequence of $(1)$ and $(2)$).

We will be using from now on the notation introduced in Section $\ref{malleable}$. Let $\mathcal{M} = \Gamma(H_\R, U_t)''$ be a free Araki-Woods factor. Denote by $M = \mathcal{M} \rtimes_\sigma \R$ its continuous core.

\begin{lem}\label{weakcontainment}
Let $p \in L(\R)$ be a nonzero projection such that $\Tr(p) < \infty$. The $pM_1p, pM_1p$-bimodule $\mathcal{H} = L^2(p\widetilde{M}p) \ominus L^2(pM_1p)$ is weakly contained in the coarse bimodule $L^2(pM_1p) \otimes L^2(pM_1p)$.
\end{lem}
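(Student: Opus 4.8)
The strategy is to combine the amalgamated free product structure $\widetilde{M} = M_1 \ast_{L(\R)} M_2$ with the amenability of the amalgam $B := L(\R)$, which is abelian. Throughout I write $H_i^\circ = L^2(M_i) \ominus L^2(B)$, regarded as a $B, B$-bimodule by restricting the two-sided $M_i$-action to $B$. Since $p \in B \subset M_1$ commutes with the conditional expectation $E_{M_1}$, compression by $p$ is harmless: one has $\mathcal{H} = p\bigl(L^2(\widetilde{M}) \ominus L^2(M_1)\bigr)p$. So the plan is first to analyze the $M_1, M_1$-bimodule $\mathcal{H}_0 := L^2(\widetilde{M}) \ominus L^2(M_1)$, decomposing it into pieces of the form $L^2(M_1) \otimes_B \mathcal{C} \otimes_B L^2(M_1)$ with $\mathcal{C}$ a $B, B$-bimodule, and then to use amenability of $B$ to collapse each piece into the coarse bimodule after compressing by $p$.

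For the decomposition, I would use Voiculescu's description of $L^2(\widetilde{M})$ as the orthogonal sum of $L^2(B)$ and the reduced words $H_{i_1}^\circ \otimes_B \cdots \otimes_B H_{i_n}^\circ$ with $i_1 \neq \cdots \neq i_n$. Removing $L^2(M_1) = L^2(B) \oplus H_1^\circ$ leaves exactly the reduced words containing at least one letter from $M_2 \ominus B$. Each such word factors uniquely as $w = m_L\, c\, m_R$, where $m_L \in M_1$ is the initial letter if $w$ starts with an $M_1$-letter (and is absorbed into $B$ otherwise), $m_R \in M_1$ is the terminal letter if $w$ ends with an $M_1$-letter (and is absorbed otherwise), and the central word $c$ both begins and ends with a letter from $M_2 \ominus B$. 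Because the left and right $M_1$-actions only modify $m_L$ and $m_R$ respectively --- these being insulated from $c$ by $M_2$-letters --- this factorization gives an isometric $M_1, M_1$-bimodular identification
\begin{equation*}
\mathcal{H}_0 \;\cong\; \bigoplus_{k \geq 0} L^2(M_1) \otimes_B \mathcal{C}_k \otimes_B L^2(M_1), \qquad \mathcal{C}_k = H_2^\circ \otimes_B (H_1^\circ \otimes_B H_2^\circ)^{\otimes_B k}.
\end{equation*}

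Now compressing by $p$ and invoking the weak-containment principle recorded just before the lemma --- if $B$ is amenable, $H$ is an $M, B$-bimodule and $K$ a $B, N$-bimodule, then $H \otimes_B K \subset_{\weak} H \otimes K$ as $M, N$-bimodules --- applied with $M = N = pM_1p$, $H = pL^2(M_1)$ and $K = \mathcal{C}_k \otimes_B L^2(M_1)p$, I obtain
\begin{equation*}
p\bigl(L^2(M_1) \otimes_B \mathcal{C}_k \otimes_B L^2(M_1)\bigr)p \;\subset_{\weak}\; pL^2(M_1) \otimes \bigl(\mathcal{C}_k \otimes_B L^2(M_1)p\bigr).
\end{equation*}
On the right-hand side the left $pM_1p$-action lives only on the first leg and the right $pM_1p$-action only on the second, so it is a coarse-type bimodule: $pL^2(M_1) = L^2(pM_1)$ embeds, as a left $pM_1p$-module, into a multiple of the standard module $L^2(pM_1p)$, and symmetrically for the right leg. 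Hence the right-hand side embeds into a multiple of $L^2(pM_1p) \otimes L^2(pM_1p)$, so each summand is weakly contained in the coarse bimodule. Summing over $k$ and using stability of weak containment under direct sums yields $\mathcal{H} \subset_{\weak} L^2(pM_1p) \otimes L^2(pM_1p)$.

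I expect the main obstacle to be the bimodular isometry underlying the decomposition: one must verify that $m_L \otimes_B c \otimes_B m_R \mapsto m_L c m_R$ is well defined, isometric, and intertwines both $M_1$-actions. This is precisely the orthogonality calculus for reduced words over $B$ --- the inner product of two such words collapses through the conditional expectation $E_B$ onto $B$ exactly because the central words are insulated by $M_2$-letters --- and so it is bookkeeping rather than a new idea. The remaining ingredients (amenability of the abelian algebra $B$, the cited weak-containment fact, and the reduction of coarse-type bimodules to multiples of the standard coarse bimodule) are routine.
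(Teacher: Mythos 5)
Your proposal is correct and follows essentially the same route as the paper's proof: the same Voiculescu reduced-word decomposition of $L^2(\widetilde{M}) \ominus L^2(M_1)$ into summands $L^2(M_1) \otimes_B (\text{alternating central word beginning and ending in } L^2(M_2)\ominus L^2(B)) \otimes_B L^2(M_1)$, the same use of amenability of $B = L(\R)$ via the weak-containment principle to pass from $\otimes_B$ to $\otimes$, and the same final reduction of the coarse-type pieces to multiples of $L^2(pM_1p) \otimes L^2(pM_1p)$. The only cosmetic differences are that the paper compresses by $p$ before decomposing and replaces all the $\otimes_B$'s at once, whereas you compress afterwards and split off only the left leg.
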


\begin{proof}
Set $B = L(\R)$. Let $p \in L(\R)$ be a nonzero projection such that $\Tr(p) < \infty$. By definition of the amalgamated free product $\widetilde{M} = M_1 \ast_{L(\R)} M_2$ (see \cite{voiculescu92} and \cite{ueda}), we have as $pM_1p, pM_1p$-bimodules
\begin{equation*}
L^2(p\widetilde{M}p) \ominus L^2(pM_1p) \cong \bigoplus_{n \geq 1} \mathcal{H}_n,
\end{equation*} 
where 
\begin{equation*}
\mathcal{H}_n = L^2(pM_1) \otimes_B \mathop{\overbrace{(L^2(M_2) \ominus L^2(B)) \otimes_B \cdots  \otimes_B (L^2(M_2) \ominus L^2(B))}}^{2n - 1} \otimes_B L^2(M_1p).
\end{equation*}
Since $B = L(\R)$ is amenable, the identity bimodule $L^2(B)$ is weakly contained in the coarse bimodule $L^2(B) \otimes L^2(B)$. From the standard properties of composition and weak containment of bimodules, it follows that as $pM_1p, pM_1p$-bimodules
\begin{equation*}
\mathcal{H}_n \subset_{\weak} L^2(pM_1) \otimes \mathop{\overbrace{(L^2(M_2) \ominus L^2(B)) \otimes \cdots  \otimes (L^2(M_2) \ominus L^2(B))}}^{2n - 1} \otimes L^2(M_1p).
\end{equation*}
Consequently, we obtain as $pM_1p, pM_1p$-bimodules
\begin{equation*}
\mathcal{H} = L^2(p\widetilde{M}p) \ominus L^2(pM_1p) \subset_{\weak} \bigoplus L^2(pM_1) \otimes L^2(M_1p).
\end{equation*}
Moreover, as a left $pM_1p$-module, $L^2(pM_1)$ is contained in $\bigoplus L^2(pM_1p)$. Likewise, the right $pM_1p$-module $L^2(M_1p)$ is contained in $\bigoplus L^2(pM_1p)$. Therefore, we get as $pM_1p, pM_1p$-bimodules
\begin{equation*}
\mathcal{H} = L^2(p\widetilde{M}p) \ominus L^2(pM_1p) \subset_{\weak} \bigoplus L^2(pM_1p) \otimes L^2(pM_1p).
\end{equation*}
\end{proof}

\subsection{The intermediate key result}

Let $\mathcal{M} = \Gamma(H_\R, U_t)''$ be a free Araki-Woods factor. Since $\mathcal{M}$ has the complete metric approximation property by Theorem A,  so do its core $M = \mathcal{M} \rtimes_\sigma \R$ and $pMp$, for any $\Tr$-finite nonzero projection $p \in M$ by Theorem $\ref{properties}$.

\begin{theo}\label{normalizer1}
Let $\mathcal{M} = \Gamma(H_\R, U_t)''$ be a free Araki-Woods factor. Denote by $\chi$ the corresponding free quasi-free state and by $M = \mathcal{M} \rtimes_{\sigma^\chi} \R$ the continuous core. Let $p \in L(\R)$ be a nonzero projection such that $\Tr(p) < \infty$. Let $P \subset pMp$ be an amenable von Neumann subalgebra. If $P \npreceq_M L(\R)$, then $\mathcal{N}_{pMp}(P)''$ is amenable.
\end{theo}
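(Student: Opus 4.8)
The plan is to derive amenability of $Q := \mathcal{N}_{pMp}(P)''$ from the bimodule characterization of amenability recalled above. Since $\Tr(p) < \infty$, the algebra $pMp$ is finite and carries its normalized trace, and by Theorem A together with Theorem \ref{properties} it has the c.m.a.p. It therefore suffices to show that the trivial $Q,Q$-bimodule $L^2(Q)$ is weakly contained in the coarse bimodule $L^2(Q) \otimes L^2(Q)$. The first ingredient is \emph{weak compactness}: because $pMp$ has the c.m.a.p.\ and $P$ is amenable, I would invoke Ozawa--Popa's weak compactness criterion \cite{ozawapopa} for the action of $\mathcal{N}_{pMp}(P)$ on $P$, producing a net of unit vectors $(\eta_n)$ in the positive cone of $L^2(P) \otimes \overline{L^2(P)}$ that is asymptotically central for $P$, asymptotically invariant under $\Ad(u) \otimes \overline{\Ad(u)}$ for every $u \in \mathcal{N}_{pMp}(P)$, and has the expected tracial coefficients. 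These vectors encode the ``almost $Q$-centrality'' that must ultimately be transported into the coarse regime.

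The second ingredient produces a coarse target. Consider the $pMp,pMp$-bimodule $\mathcal{H} = L^2(p\widetilde{M}p) \ominus L^2(pMp)$. Lemma \ref{weakcontainment} gives $\mathcal{H} \subset_{\weak} L^2(pMp) \otimes L^2(pMp)$; restricting the bimodule structure to $Q \subset pMp$ and using that every one-sided $Q$-module embeds into a multiple of $L^2(Q)$, this upgrades to $\mathcal{H} \subset_{\weak} L^2(Q) \otimes L^2(Q)$ as $Q,Q$-bimodules. By transitivity of weak containment it therefore suffices to prove $L^2(Q) \subset_{\weak} \mathcal{H}$, that is, to manufacture unit vectors in $\mathcal{H}$ that are asymptotically central for $Q$ with tracial coefficients.

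The heart of the argument is then the transport of the vectors $(\eta_n)$ through the deformation. Since $P \npreceq_M L(\R)$, Theorem \ref{uniform} rules out uniform convergence of $(\alpha_t)$ on $\mathcal{U}(P)$, and Corollary \ref{uniformcorollary} makes this quantitative: there are $0 < \kappa < 1$, $t_k \downarrow 0$ and $u_k \in \mathcal{U}(P)$ with $\|(E_M \circ \alpha_{t_k})(u_k)\|_{2, \Tr} \leq \kappa \|p\|_{2, \Tr}$, so that the off-diagonal parts $\alpha_{t_k}(u_k) - (E_M \circ \alpha_{t_k})(u_k) \in \mathcal{H}$ have norm bounded away from $0$. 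The idea is to push $\eta_n$ through $\alpha_t$ for a well-chosen small $t$ and project onto $\mathcal{H}$, obtaining vectors $\zeta_n \in \mathcal{H}$. The asymptotic $\Ad(\mathcal{N}_{pMp}(P))$-invariance of $(\eta_n)$, combined with the fact that $\alpha_t$ approximately intertwines the two-sided $Q$-action as $t \to 0$ (with the error controlled by the transversality inequality of Proposition \ref{transversality}), forces the $\zeta_n$ to be asymptotically $Q$-central, while the lower bound on the off-diagonal mass prevents them from degenerating and permits normalization. The normalized $\zeta_n$ then witness $L^2(Q) \subset_{\weak} \mathcal{H}$, and combining with the previous step yields $L^2(Q) \subset_{\weak} L^2(Q) \otimes L^2(Q)$, so that $Q$ is amenable.

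The main obstacle is entirely concentrated in this last step: the parameter $t$ must be small enough that $\alpha_t$ nearly commutes with the two-sided $Q$-action on $\eta_n$, so that asymptotic $Q$-centrality survives the projection onto $\mathcal{H}$, yet one must simultaneously retain a uniform lower bound on the $\mathcal{H}$-component supplied by the non-uniform-convergence sequence of Corollary \ref{uniformcorollary}. These two requirements pull $t$ in opposite directions, and reconciling them — while carrying out all estimates inside the finite algebra $pMp$ with its normalized trace and keeping track of the ambient semifinite trace $\Tr$ — is where the technical weight of the proof lies.
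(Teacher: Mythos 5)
Your proposal is correct and follows essentially the same route as the paper: the paper's proof simply cites \cite[Theorem 3.5]{houdayer8}, an abstract criterion that packages precisely the argument you sketch (Ozawa--Popa weak compactness of the $\mathcal{N}_{pMp}(P)$-action on $P$ coming from the c.m.a.p.\ and amenability of $P$, transport of the almost-central vectors through $(\alpha_t)$ controlled by transversality and the quantitative non-recovery estimate, and the coarse weak containment), and then verifies its three hypotheses exactly as you do, via Theorem A, Corollary \ref{uniformcorollary} and Lemma \ref{weakcontainment}. The only imprecision is that the transported and projected vectors live in $\mathcal{H} \otimes \overline{L^2(pMp)}$ rather than in $\mathcal{H}$ itself, but your own reduction (every one-sided $Q$-module embeds in a multiple of $L^2(Q)$) applies verbatim to that bimodule, so the architecture is unaffected.
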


\begin{proof}
The proof is a generalization of the one of \cite[Theorem 3.5]{houdayer8} building on the work of Ozawa and Popa (see \cite[Theorem 4.9]{ozawapopa} and \cite[Theorem B]{ozawapopaII}). What is shown in \cite[Theorem 3.5]{houdayer8} is the following. Assume that $P \subset N$ are finite von Neumann algebras such that $P$ is amenable and $N$ has the c.m.a.p. Assume moreover that there are a finite von Neumann algebra $N \subset \widetilde{N}$ and trace-preserving $\ast$-homomorphisms $\alpha_t : N \to \widetilde{N}$ such that:
\begin{enumerate}
\item $\lim_{t \to 0} \|\alpha_t(x) - x\|_2 = 0$, for every $x \in N$.
\item There exists $0 < \kappa < 1$, a sequence of positive reals $(t_k)$ and a sequence of unitaries $(u_k)$ in $\mathcal{U}(P)$ such that $\lim_k t_k = 0$ and $\|(E_N \circ \alpha_{t_k})(u_k) \|_{2, \Tr} \leq \kappa \|p\|_{2, \Tr}$, for every $k \in \N$.
\item The $N, N$-bimodule $L^2(\widetilde{N}) \ominus L^2(N)$ is weakly contained in the coarse bimodule $L^2(N) \otimes L^2(N)$.
\end{enumerate}
Then $\mathcal{N}_N(P)''$ is amenable.

Now let $\mathcal{M} = \Gamma(H_\R, U_t)''$ be a free Araki-Woods factor. Denote by $\chi$ the corresponding free quasi-free state and by $M = \mathcal{M} \rtimes_{\sigma^\chi} \R$ the continuous core. Let $p \in L(\R)$ be a nonzero projection such that $\Tr(p) < \infty$. We know that $N = pMp$ has the c.m.a.p.\ since both $\mathcal{M}$ and $M$ have the c.m.a.p.\ (by Theorem A). Let $P \subset pMp$ be an amenable von Neumann subalgebra. The malleable deformation $(\alpha_t)$ clearly satisfies $(1)$. Since $P \npreceq_M L(\R)$, Corollary $\ref{uniformcorollary}$  yields $(2)$. Lemma $\ref{weakcontainment}$ finally yields $(3)$. Therefore $\mathcal{N}_{pMp}(P)''$ is amenable.
\end{proof}

\subsection{Proof of Theorem B}
Let $\mathcal{M}$ be a von Neumann algebra and let $\varphi, \psi$ be two faithful normal states on $\mathcal{M}$. Recall from Section $\ref{preliminaries}$ that through the natural $\ast$-isomorphism 
$$\Pi_{\varphi, \psi} : \mathcal{M} \rtimes_{\sigma^\varphi} \R \to
\mathcal{M} \rtimes_{\sigma^\psi} \R,$$ we will identify
$$(\pi_{\sigma^\varphi}(\mathcal{M}) \subset \mathcal{M}
\rtimes_{\sigma^\varphi} \R, \theta^\varphi, \Tr_\varphi) \mbox{ with
} (\pi_{\sigma^\psi}(\mathcal{M}) \subset \mathcal{M}
\rtimes_{\sigma^\psi} \R, \theta^\psi, \Tr_\psi),$$ and simply denote
it by $(\mathcal{M} \subset M, \theta, \Tr)$, where $\theta$ is the
dual action of $\R$ on the core $M$ and $\Tr$ is the semifinite
faithful normal trace on $M$ such that $\Tr \circ \theta_s =
e^{-s}\Tr$, for any $s \in \R$.

However, we need to pay attention to the following: whereas the
inclusion $\mathcal M \subset M$ does not depend on the state, there
are {\em a priori} two different copies of the abelian von Neumann
algebra $L(\R)$ inside $M$. To avoid any confusion, we will denote by
$\lambda^\varphi(s)$ (resp.\ $\lambda^\psi(s)$) the unitaries
implementing the modular action $\sigma^\varphi$ (resp.\ $\sigma^\psi$)
on $\mathcal{M}$. The following technical Proposition will be useful, as it
explains why we do not have to worry very much about the state. 

\begin{prop}\label{diffuse}
Let $\mathcal{M}$ be a von Neumann algebra. Let $A \subset \mathcal{M}$ be a
separable diffuse von Neumann subalgebra. Then, for any nonzero
projection $p \in A' \cap M$ with $\Tr(p) < \infty$, and any faithful
normal state $\varphi$ on $\mathcal{M}$, we have
\begin{equation*}
A p \npreceq_M \lambda^\varphi(\R)''.
\end{equation*}
\end{prop}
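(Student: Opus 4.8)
The plan is to verify criterion (2) of Lemma \ref{intertwining} directly by producing an explicit sequence of unitaries in $Ap$ along which the conditional-expectation quantities vanish. Since $A$ is separable and diffuse, it contains a diffuse abelian von Neumann subalgebra (any masa in $A$ works, being automatically diffuse), isomorphic to $L^\infty([0,1])$, and this supplies a sequence of unitaries $(a_k)$ in $A$ with $a_k \to 0$ $\sigma$-weakly in $\mathcal{M}$ (for instance the characters $e^{2\pi i k\,\cdot}$, which go to $0$ weakly by Riemann--Lebesgue). Because $p \in A' \cap M$, each $u_k := a_k p$ is a unitary of the (nonunital) algebra $Ap \subset pMp$ with unit $p$. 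Writing $B = \lambda^\varphi(\R)''$, it then suffices to show that $\lim_k \|E_B(x^* u_k y)\|_{2,\Tr} = 0$ for all $x,y \in M$, which is precisely condition (2) of Lemma \ref{intertwining} and hence gives $Ap \npreceq_M B$.

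First I would reduce to generators. Since $u_k = p u_k p$ with $\Tr(p) < \infty$ and $\|u_k\|_\infty \leq 1$, and $E_B$ is a $\|\cdot\|_{2,\Tr}$-contraction, the standard Kaplansky/$\ast$-strong approximation recalled in the preliminaries (namely $\|(z - z_0)p\|_{2,\Tr} \to 0$ when $z_0 \to z$ $\ast$-strongly, and likewise on the right) lets me assume $x = m_1 \lambda^\varphi(s_1)$ and $y = m_2 \lambda^\varphi(s_2)$ with $m_i \in \mathcal{M}$ and $s_i \in \R$. Using that $E_B$ is $B$-bimodular and that the $\lambda^\varphi(s)$ are unitaries normalizing $\Tr$, one gets $\|E_B(x^* u_k y)\|_{2,\Tr} = \|E_B(m_1^* a_k p m_2)\|_{2,\Tr}$, so the whole statement reduces to proving $\|E_B(m_1^* a_k p m_2)\|_{2,\Tr} \to 0$ for fixed $m_1, m_2 \in \mathcal{M}$.

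The crux — and the only genuinely delicate point — is that $\lambda^\varphi(s)$ alone is \emph{not} square-integrable for $\Tr$, so one cannot naively expand $\xi := p m_2 \in L^2(M,\Tr)$ along $\mathcal{M}\,\lambda^\varphi(\R)$; the $\Tr$-finiteness carried by $p$ must be preserved. The idea is to keep it \emph{on the right}: as $\Tr_{|B}$ is semifinite there are $\Tr$-finite projections $q_n \uparrow 1$ in $B$, and $\xi$ can be approximated in $\|\cdot\|_{2,\Tr}$ by $\zeta q$, where $q \in B$ is a $\Tr$-finite projection and $\zeta = \sum_j m_j \lambda^\varphi(s_j)$ is a finite sum with $m_j \in \mathcal{M}$ (here $\mathcal{M}\,\lambda^\varphi(\R)$ is $\ast$-strongly dense in $M$, and $\|(\,\cdot\,)q\|_{2,\Tr}$ controls the error by the preliminary fact). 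For such an approximant, right $B$-modularity of $E_B$ gives $E_B(m_1^* a_k m_j \lambda^\varphi(s_j) q) = \varphi(m_1^* a_k m_j)\,\lambda^\varphi(s_j) q$, of norm $|\varphi(m_1^* a_k m_j)|\,\|q\|_{2,\Tr}$. Since $a_k \to 0$ $\sigma$-weakly and $m \mapsto \varphi(m_1^* m m_j)$ is a normal functional on $\mathcal{M}$, each coefficient $\varphi(m_1^* a_k m_j) \to 0$, so this finite sum tends to $0$ while the (fixed, small) approximation error is absorbed. This yields $\|E_B(m_1^* a_k p m_2)\|_{2,\Tr} \to 0$ and finishes the argument. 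Thus the main obstacle is purely the square-integrability issue, and it is dissolved by placing the $\Tr$-finite projection on the right so that $E_B$ outputs honest $L^2$-vectors and the estimate collapses to the scalar weak-null limits $\varphi(m_1^* a_k m_j) \to 0$.
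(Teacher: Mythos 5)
Your proposal is correct and follows essentially the same route as the paper's own proof: weakly null unitaries $a_kp$ coming from a masa $\cong L^\infty([0,1])$ inside $A$, verification of criterion $(2)$ of Lemma \ref{intertwining}, approximation by the algebraic crossed product $\mathrm{span}\{\mathcal{M}\lambda^\varphi(\R)\}$ with a $\Tr$-finite projection of $\lambda^\varphi(\R)''$ kept adjacent so that $E_{\lambda^\varphi(\R)''}$ produces genuine $L^2$-vectors, and the final collapse to the scalar limits $\varphi(m_1^*a_km_j)\to 0$ via normality of $\varphi$. The only difference is bookkeeping: the paper sandwiches with $q_m$ on both sides and approximates $q_mx^*p$ and $pyq_m$ directly, whereas you first strip off the $\lambda^\varphi(s)$'s by $B$-bimodularity and traciality and then put a single $\Tr$-finite projection on the right; this is a cosmetic reorganization, not a different method.
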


\begin{proof}
Fix $\varphi$ a faithful normal state on $\mathcal{M}$ and $p$ a nonzero $\Tr$-finite projection in $M$. Since $A \subset \mathcal{M}$ is diffuse and separable, any maximal abelian $\ast$-subalgebra in
$A$ is separable and diffuse, and thus isomorphic to $L^\infty([0,1])$. Therefore
there exists a sequence of unitaries $(u_n)$ in $A$ such that $u_n \to
0$ weakly. Observe that $Ap \subset pMp$ is a von Neumann subalgebra and that $(u_n p)$ are unitaries in $A p$.

Let $(q_m)$ be an increasing sequence of projections in $\lambda^\varphi(\R)''$ such that $q_m \to 1$ strongly and $\Tr(q_m) < \infty$. Let $x, y \in (M)_1$ and $\varepsilon > 0$. Since $\Tr(p) < \infty$, choose $m \in \N$ large enough such that 
\begin{equation*}
\|q_m x^* p - x^* p\|_{2, \Tr} + \|p y q_m - p y\|_{2, \Tr} < \varepsilon.
\end{equation*}
Observe now that the unital $\ast$-algebra
\begin{equation*}
\mathcal{E} := \left\{\sum_{s \in S} x_s \lambda^\varphi(s) : S \subset \R \mbox{ finite}, x_s \in \mathcal{M}\right\}
\end{equation*} 
is $\ast$-strongly dense in $M$, so that one can find nets $(x_i)_{i \in I}$ and $(y_j)_{j \in J}$ in $(\mathcal{E})_1$ such that $x_i \to px$ and $y_j \to py$ $\ast$-strongly. Since now $\Tr(q_m) < \infty$, one can find $(i, j) \in I \times J$, such that 
\begin{equation*}
\|q_m x^* p - q_m x_i^*\|_{2, \Tr} + \|p y q_m - y_j q_m\|_{2, \Tr} < \varepsilon.
\end{equation*}
For simplicity of notation write $L(\R) := \lambda^\varphi(\R)''$. For every $n \in \N$, we get
\begin{eqnarray*}
\|E_{L(\R)}(x^*p u_n p y)\|_{2, \Tr} & \leq & \|E_{L(\R)}(q_m x^*p u_n p y q_m)\|_{2, \Tr} + \varepsilon \\
& \leq & \|E_{L(\R)}(q_m x_i^* u_n y_j q_m)\|_{2, \Tr} + 2\varepsilon.
\end{eqnarray*}
Since $x_i, y_j \in (\mathcal{E})_1$, write 
\begin{eqnarray*}
x_i & = & \sum_{s \in S} x_s \lambda^\varphi(s) \\
y_j & = & \sum_{t \in T} y_t \lambda^\varphi(t),
\end{eqnarray*}
where $S, T \subset \R$ are finite and $x_s, y_t \in \mathcal{M}$. Therefore
\begin{eqnarray*}
E_{L(\R)}(q_m x_i^* u_n y_j q_m) = \sum_{(s, t) \in S \times T} \varphi(x_s^\ast u_n y_t) \lambda^\varphi(t - s) q_m.
\end{eqnarray*}
Since $\varphi$ is a faithful normal state on $\mathcal{M}$, one may regard $A \subset \mathcal{M} \subset \mathbf{B}(L^2(\mathcal{M}, \varphi))$. Since $u_n \to 0$ weakly in $A$, there exists $n_0 \in \N$ large enough such that $\forall n \geq n_0$, $\forall (s, t) \in S \times T$, 
\begin{equation*}
|\varphi(x_s^* u_n y_t)| \leq \frac{\varepsilon}{\|q_m\|_{2, \Tr} (|S| \cdot |T| + 1)}.
\end{equation*}
We get, for every $n \geq n_0$,
\begin{equation*}
\| E_{L(\R)}(q_m x_i^* u_n y_j q_m) \|_{2, \Tr} \leq \varepsilon.
\end{equation*}
Therefore, we have for every $n \geq n_0$,
\begin{equation*}
\|E_{L(\R)}(x^* pu_np y)\|_{2, \Tr} \leq 3\varepsilon.
\end{equation*}
By $(2)$ of Lemma $\ref{intertwining}$, we get $A p \npreceq_M \lambda^\varphi(\R)''$. 
\end{proof}

We are now ready to prove Theorem B. We will denote by $\chi$ the
corresponding free quasi-free state on $\mathcal{M}$. We prove the
result by contradiction. Assume that there exists a diffuse
nonamenable von Neumann subalgebra $\mathcal{N} \subset \mathcal{M}$
together with $E : \mathcal{M} \to \mathcal{N}$ a faithful normal
conditional expectation such that $\mathcal{N}$ has a Cartan
subalgebra $A \subset \mathcal{N}$. Observe that $A$ is necessarily
diffuse. Denote by $F : \mathcal{N} \to A$ the faithful normal
conditional expectation. Choose a faithful normal trace $\tau$ on
$A$. Write $\psi = \tau \circ F \circ E$. Observe that $\psi$ is a
faithful normal state on $\mathcal{M}$ such that $\psi \circ E = \psi$
and $A \subset \mathcal{N}^\psi$. Set $M = \mathcal{M}
\rtimes_{\sigma^\psi} \R$ and $N = \mathcal{N} \rtimes_{\sigma^\psi}
\R$ and notice that $\lambda^\psi(\R)''\subset A' \cap M$. Observe that since
$\mathcal{N}$ is a nonamenable von Neumann algebra, its core $N$ is
nonamenable as well. Take a nonzero $\Tr$-finite projection $p \in
\lambda^\psi(\R)''$ large enough such that $pNp$
is nonamenable. Since $(A \overline{\otimes} \lambda^\psi(\R)'')(1
\otimes p) \subset pNp$ is regular and $pNp$ is nonamenable, Theorem
\ref{normalizer1} implies that $(A \overline{\otimes}
\lambda^\psi(\R)'')(1 \otimes p) \preceq_M \lambda^\chi(\R)''$ and
thus $A(1 \otimes p) \preceq_M \lambda^\chi(\R)''$. Since $A$ is
diffuse, this contradicts Proposition \ref{diffuse}.

\subsection{Proof of Theorem D}

Let $\mathcal{M} = \Gamma(H_\R, U_t)''$ be a free Araki-Woods
factor. As usual, denote by $M = \mathcal{M}Ê\rtimes_\sigma \R$ its
continuous core, where $\sigma$ is the modular group associated with
the free quasi-free state $\chi$. Let $p \in L(\R) :=
\lambda^\chi(\R)''$ be a nonzero projection such that $\Tr(p) <
\infty$.

{\bf (1)} By contradiction, assume that there exists a maximal abelian
$\ast$-subalgebra $A \subset pMp$ for which $\mathcal{N}_{pMp}(A)''$ is
not amenable. Write $p - z \in \mathcal{Z}(\mathcal{N}_{pMp}(A)'')$
for the maximal projection such that $\mathcal{N}_{pMp}(A)''(p - z)$
is amenable. Then $z \neq 0$ and $\mathcal{N}_{pMp}(A)''z$ has no
amenable direct summand. Notice that
\begin{equation*}
\mathcal{N}_{pMp}(A)''z \subset \mathcal{N}_{zMz}(Az)''.
\end{equation*}
Since this is a unital inclusion (with unit $z$),
$\mathcal{N}_{zMz}(Az)''$ has no amenable direct summand
either. Moreover, $Az \subset zMz$ is still maximal abelian. Since
$L(\R)$ is diffuse, $\Tr_{| L(\R)}$ is semifinite and $M$ is a type ${\rm II_\infty}$ factor, we can
find a projection $p_0 \in L(\R)$ such that $p_0 \leq p$ and a unitary
$u \in \mathcal{U}(M)$ such that $u z u^* = p_0$. Observe that $A_0 =
uAzu^* \subset p_0 Mp_0$ is maximal abelian and $\mathcal{N}_{p_0 M
p_0}(A_0)''$ has no amenable direct summand. Therefore, we may assume
without loss of generality that $p = p_0$, i.e.  $A \subset pMp$ is a
maximal abelian $\ast$-subalgebra for which $\mathcal{N}_{pMp}(A)''$
has no amenable direct summand.

Theorem $\ref{normalizer1}$ yields $A \preceq_M L(\R)$. Thus there
exists $n \geq 1$, a nonzero $\Tr$-finite projection $q \in L(\R)^n$,
a nonzero partial isometry $v \in \mathbf{M}_{1, n}(\C) \otimes pM$
and a unital $\ast$-homomorphism $\psi : A \to L(\R)^n$ such that $x v
= v \psi(x)$, $\forall x \in A$. Write $q = \psi(p)$, $q' =
v^*v$. Note that $vv^* \in A' \cap pMp = A$ and $q' \in \psi(A)' \cap
q M^n q$. It follows that $q' (\psi(A)' \cap q M^n q) q' =
(\psi(A)q')' \cap q' M^n q'$. Since by spatiality $\psi(A)q' = v^*Av$
is maximal abelian, we get $q' (\psi(A)' \cap q M^n q) q' = \psi(A) q'
= v^*A v$. Thus $\psi(A)' \cap q M^n q$ has a type ${\rm I}$ abelian direct
summand. Moreover,
$$q(\mathcal{M}^\chi \overline{\otimes} L(\R))^nq \subset q(L(\R)' \cap M)^nq \subset \psi(A)' \cap q M^n q.$$
Recall that one of the following situations holds:
\begin{enumerate}
\item [(a)] $(U_t)$ contains a trivial or periodic subrepresentation of dimension $2$. In that case, $L(\F_2) \subset \mathcal{M}^\chi$.
\item [(b)] $(U_t) = \R \oplus (V_t)$, where $(V_t)$ is weakly mixing. In that case, $\mathcal{M}^\chi = L(\Z)$.
\item [(c)] $(U_t)$ is weakly mixing and then $\mathcal{M}^\chi = \C$. 
\end{enumerate}
The subcase (a) cannot occur because otherwise $\psi(A)' \cap q M^n q$ would be of type ${\rm II}$. 

Assume now that (b) occurs. We have $(U_t) = \R \oplus (V_t)$ where $(V_t)$ is weakly mixing. Then we have 
$$\mathcal{M} = \Gamma(H_\R, U_t)'' \simeq \Gamma(K_\R, V_t)'' \ast L(\Z),$$
and \cite[Proposition 1]{ueda3} implies that $L(\Z)$ is maximal abelian in $\mathcal{M}$. Therefore $B = L(\Z) \overline{\otimes} L(\R)$ is maximal abelian in $M$. Since $A \preceq_M L(\R)$, we get $A \preceq_M B$. Since $A \subset pMp$ and $B \subset M$ are both maximal abelian,  Proposition $\ref{intertwining-masa}$ yields $n \geq 1$, a nonzero partial isometry $v \in pM$ such that $vv^* \in A$, $v^*v \in B$ and $v^* A v = Bv^*v$. By spatiality, we get
$$\Ad(v^*)\left( \mathcal{N}_{vv^*Mvv^*}(A vv^*)''\right) = \mathcal{N}_{v^*vMv^*v}(Bv^*v)''.$$
On the one hand, $\mathcal{N}_{vv^*Mvv^*}(A vv^*)'' = vv^* \mathcal{N}_{pMp}(A)'' vv^*$ is not amenable, since $\mathcal{N}_{pMp}(A)''$ has no amenable direct summand. On the other hand, since $L(\Z) = \mathcal{M}^\chi$ is diffuse, Proposition $\ref{diffuse}$ implies $B v^*v = (L(\Z) \overline{\otimes} L(\R))v^*v \npreceq_M L(\R)$. Theorem $\ref{normalizer1}$ implies that $\mathcal{N}_{v^*v M v^*v}(Bv^*v)''$ is amenable. We have reached a contradiction.

Assume at last that (c) occurs. Since $(U_t)$ is weakly mixing, it follows that $\mathcal{M}^{\chi} = \C$ and $L(\R)$ is maximal abelian in $M$ by Proposition $\ref{masa}$. Proposition $\ref{intertwining-masa}$ yields $n \geq 1$, a nonzero partial isometry $v \in pM$ such that $vv^* \in A$, $v^*v \in L(\R)$ and $v^* A v = L(\R)v^*v$. By spatiality, we get
$$\Ad(v^*)\left( \mathcal{N}_{vv^*Mvv^*}(A vv^*)''\right) = \mathcal{N}_{v^*vMv^*v}(L(\R)v^*v)''.$$
On the one hand, $\mathcal{N}_{vv^*Mvv^*}(A vv^*)'' = vv^* \mathcal{N}_{pMp}(A)'' vv^*$ is not amenable, since $\mathcal{N}_{pMp}(A)''$ has no amenable direct summand. On the other hand, since $(U_t)$ is weakly mixing, $L(\R)$ is singular in $M$, i.e.\ $\mathcal{N}_{M}(L(\R))'' = L(\R)$. Therefore $\mathcal{N}_{v^*vMv^*v}(L(\R)v^*v)'' = L(\R)v^*v$. We have reached again a contradiction.

{\bf (2-a) Assume that $(U_t)$ is strongly mixing.} Let $P \subset pMp$ be a unital diffuse  amenable von Neumann subalgebra. By contradiction, assume that $ \mathcal{N}_{pMp}(P)''$ is not amenable. With the same reasoning as before, we may assume that $\mathcal{N}_{pMp}(P)''$ has no amenable direct summand.

Theorem $\ref{normalizer1}$ yields $P \preceq_M L(\R)$. Thus there exist $n \geq 1$, a nonzero  $\Tr$-finite projection $q \in L(\R)^n$, a nonzero partial isometry $v \in \mathbf{M}_{1, n}(\C) \otimes pM$ and a unital $\ast$-homomorphism $\psi : P \to qL(\R)^nq$ such that $x v = v \psi(x)$, $\forall x \in P$. Note that $vv^* \in P' \cap pMp \subset \mathcal{N}_{pMp}(P)''$ and $v^*v \in \psi(P)' \cap q M^n q$. Since $\psi(P) \subset qL(\R)^n q$ is a unital diffuse von Neumann subalgebra and the action $\R \curvearrowright \mathcal{M}$ is strongly mixing (see \cite[Proposition 2.4]{houdayer6}), \cite[Theorem 3.7]{houdayer6} yields $\QN_{qM^nq}(\psi(P))'' \subset qL(\R)^nq$. Thus we may assume that $v^*v = q$. Let $u \in \mathcal{N}_{pMp}(P)$. We have
\begin{eqnarray*}
v^* u v \psi(P) & = & v^* u Pv \\
& = & v^* P u v \\
& = & \psi(P) v^* u v.
\end{eqnarray*}
Hence $v^* \mathcal{N}_{pMp}(P)'' v \subset \QN_{qM^nq}(\psi(P))'' \subset qL(\R)^nq$. But $$\Ad(v^*) : vv^* \mathcal{N}_{pMp}(P)'' vv^* \to  q L(\R)^n q$$
is a unital $\ast$-isomorphism. Since $\mathcal{N}_{pMp}(P)''$ has no amenable direct summand, $vv^* \mathcal{N}_{pMp}(P)'' vv^*$ is not amenable. This contradicts the fact that $q L(\R)^n q$ is amenable.

{\bf (2-b) Assume that $U_t = \R \oplus V_t$ where $(V_t)$ is strongly mixing.} Observe that we have $\Gamma(H_\R, U_t)'' = \Gamma(K_\R, V_t)'' \ast L(\Z)$. If we denote by $u$ a generating Haar unitary for $L(\Z)$ and by $\mathcal{Q}_\infty = \ast_{n \in \Z} u^n \Gamma(K_\R, V_t)'' u^{-n}$ the infinite free product, we may regard $\Gamma(H_\R, U_t)''$ as the crossed product
$$\Gamma(H_\R, U_t)'' = \mathcal{Q}_\infty \rtimes \Z$$
where the action $\Z \curvearrowright \mathcal{Q}_\infty$ is the {\em free Bernoulli shift}. Observe that the modular group $(\sigma^\chi_t)$ acts trivially on $L(\Z)$. Moreover, $(\sigma^\chi_t)$ acts diagonally on $\mathcal{Q}_\infty$ in the following sense. Denote by $\psi$ the free quasi-free state on $\Gamma(K_\R, V_t)''$. Let $y_1, \dots, y_k \in\Gamma(K_\R, V_t)'' \ominus \C$, $n_1 \neq \cdots \neq n_k$, $x_i = u^{n_i} y_i u^{-n_i}$ and write $x = x_1 \cdots x_k$ for the corresponding reduced word in $\mathcal{Q}_\infty$. Then we have
$$\sigma^\chi_t(x) = u^{n_1} \sigma^\psi_t(y_1) u^{-n_1} \cdots u^{n_k} \sigma_t^\psi(y_k) u^{-n_k}.$$
The core $M$ is therefore given by
$$M = \mathcal{Q}_\infty \rtimes (\Z \times \R).$$
Since $(V_t)$ is assumed to be strongly mixing, it is straightforward to check that the action $\Z \times \R \curvearrowright \mathcal{Q}_\infty$ is strongly mixing (see \cite[Proposition 2.4]{houdayer6}).

We are now ready to prove that $pMp$ is strongly solid. Assume by
contradiction that it is not. As we did before, let $P \subset pMp$ be a
unital diffuse amenable von Neumann subalgebra such that $
\mathcal{N}_{pMp}(P)''$ has no amenable direct summand. Theorem
$\ref{normalizer1}$ yields $P \preceq_M L(\R)$ and hence $P \preceq_M
L(\Z) \overline{\otimes} L(\R)$. Thus there exists $n \geq 1$, a
nonzero $\Tr$-finite projection $q \in (L(\Z) \overline{\otimes}
L(\R))^n$, a nonzero partial isometry $v \in \mathbf{M}_{1, n}(\C)
\otimes pM$ and a unital $\ast$-homomorphism $\psi : P \to q(L(\Z)
\overline{\otimes} L(\R))^nq$ such that $x v = v \psi(x)$, $\forall x
\in P$. Note that $vv^* \in P' \cap pMp \subset
\mathcal{N}_{pMp}(P)''$ and $v^*v \in \psi(P)' \cap q M^n q$. Since
$\psi(P) \subset q(L(\Z) \overline{\otimes} L(\R))^n q$ is a unital
diffuse von Neumann subalgebra and the action $\Z \times \R
\curvearrowright \mathcal{Q}_\infty$ is strongly mixing, \cite[Theorem
3.7]{houdayer6} yields $v^* \mathcal{N}_{pMp}(P)'' v \subset q(L(\Z)
\overline{\otimes} L(\R))^nq$. But $$\Ad(v^*) : vv^*
\mathcal{N}_{pMp}(P)'' vv^* \to q (L(\Z) \overline{\otimes} L(\R))^n
q$$ is a unital $\ast$-isomorphism. Since $\mathcal{N}_{pMp}(P)''$ has
no amenable direct summand, $vv^* \mathcal{N}_{pMp}(P)'' vv^*$ is not
amenable. This contradicts the fact that $q(L(\Z) \overline{\otimes}
L(\R))^n q$ is amenable.

\begin{rem}
If we do not assume that $A \subset pMp$ is maximal abelian, the
assertion $(1)$ in Theorem D fails to be true. Indeed assume that
$(U_t)$ is almost periodic. Denote $L(\R) = \lambda^\chi(\R)''$,
where $\chi$ is the free quasi-free state, which is assumed to be
almost periodic. It is straightforward to check that the groupoid
normalizer $\mathcal{G}\mathcal{N}_M(L(\R))$ generates $M$. Since $L(\R)$ is
abelian, we have $\mathcal{G}\mathcal{N}_M(L(\R))'' = \mathcal{N}_M(L(\R))''$
by \cite[Lemme 2.2]{fang-groupoid}, so that $\mathcal{N}_M(L(\R))'' =
M$. Let $p \in L(\R)$ be a nonzero $\Tr$-finite projection. Since
$L(\R)$ is abelian, we finally get
$$pMp = \mathcal{N}_{pMp}(L(\R)p)'',$$
that is, $L(\R)p$ is regular in $pMp$. 
\end{rem}

\subsection{Further structural results for free products}

A free malleable deformation for (amalgamated) free products of von
Neumann algebras was discovered in \cite{ipp}. Using ideas and
techniques of \cite{{houdayer4}, {houdayer7}, {ipp}, {ozawapopa}} and
of the present paper, we obtain the following indecomposability
results for free products of von Neumann algebras:

\begin{theo}
Let $(\mathcal{M}_i, \varphi_i)$ be a family of von Neumann algebras
endowed with faithful normal states. Denote by $(\mathcal{M}, \varphi)
= \ast_{i \in I} (\mathcal{M}_i, \varphi_i)$ their free product.
\begin{enumerate}
\item Assume that $\mathcal{M}$ has the complete metric approximation
property. Then either $\mathcal{M}$ is amenable or $\mathcal{M}$ has
no Cartan subalgebra.
\item Assume that each $\mathcal{M}_i$ is hyperfinite. Let
$\mathcal{N} \subset \mathcal{M}$ be a diffuse von Neumann subalgebra
for which there exists a faithful normal conditional expectation $E :
\mathcal{M} \to \mathcal{N}$. Then either $\mathcal{N}$ is hyperfinite
or $\mathcal{N}$ has no Cartan subalgebra.
\end{enumerate}
\end{theo}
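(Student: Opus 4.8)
The plan is to transcribe the proof of Theorem B almost verbatim, replacing the rotation deformation attached to the free Gaussian functor by the free malleable deformation for arbitrary free products of \cite{ipp}. \textbf{Setup.} I would equip $\mathcal{M} = \ast_{i\in I}(\mathcal{M}_i,\varphi_i)$ with the free product state $\varphi = \ast_i\varphi_i$ and pass to the core $M = \mathcal{M}\rtimes_{\sigma^\varphi}\R$. By \cite{ueda} the trace-preserving expectation onto $L(\R) := \lambda^\varphi(\R)''$ realizes $M$ as the amalgamated free product $M = \ast_{L(\R),\,i}(\mathcal{M}_i\rtimes_{\sigma^{\varphi_i}}\R)$ over the amenable algebra $L(\R)$, with $\Tr$ semifinite there. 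For each $i$ pick a Haar unitary $u_i=\exp(ih_i)$ generating a copy of $L(\Z)$ in its trace state and set $\widetilde{\mathcal{M}} = \ast_i(\mathcal{M}_i\ast L(\Z))$; the $u_i$ lie in the centralizer of $\widetilde{\varphi}$, so the free product $(\alpha_s,\beta)$ of the $\mathrm{Ad}(\exp(ish_i))$ and of $u_i\mapsto u_i^*$ commutes with $\sigma^{\widetilde{\varphi}}$, extends to the core $\widetilde{M} = \ast_{L(\R),\,i}\big((\mathcal{M}_i\rtimes\R)\ast_{L(\R)}(L(\Z)\overline{\otimes}L(\R))\big)$, and is a $\Tr$-preserving s-malleable deformation fixing $L(\R)$ pointwise and fixing $M$ at $s=0$, so that the transversality property (Proposition \ref{transversality}) is available.

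\textbf{Weak containment and c.m.a.p.} For a nonzero $p\in L(\R)$ with $\Tr(p)<\infty$, I would show exactly as in Lemma \ref{weakcontainment} that the $pMp,pMp$-bimodule $L^2(p\widetilde{M}p)\ominus L^2(pMp)$ is weakly contained in the coarse bimodule $L^2(pMp)\otimes L^2(pMp)$. Decomposing the complement into reduced words over $L(\R)$, every summand contains at least one letter from an added piece $L(\Z)\overline{\otimes}L(\R)$; since that piece and the amalgam $L(\R)$ are amenable, the bimodule facts preceding Lemma \ref{weakcontainment} convert each $\otimes_{L(\R)}$ into a coarse tensor product. The decisive point is that this uses only amenability of the \emph{added} pieces and of $L(\R)$, hence holds with \emph{no} assumption on the $\mathcal{M}_i$; this is precisely why case (1) needs only the c.m.a.p. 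The c.m.a.p. of $pMp$ is granted in case (1) by hypothesis together with Theorem \ref{properties}(4) and (1), and in case (2) by Theorem \ref{properties}(5) (each $\mathcal{M}_i$ hyperfinite gives $\Lambda_{\cb}(\mathcal{M})=1$) followed again by Theorem \ref{properties}(4) and (1).

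\textbf{Core normalizer estimate.} Next I would prove the analog of Theorem \ref{normalizer1}: if $P\subset pMp$ is amenable and $P\npreceq_M L(\R)$, then $\mathcal{N}_{pMp}(P)''$ is amenable. This requires the analogs of Theorem \ref{uniform} and Corollary \ref{uniformcorollary}, namely that $P\npreceq_M L(\R)$ produces $0<\kappa<1$, reals $t_k\to 0$ and unitaries $u_k\in\mathcal{U}(P)$ with $\|(E_M\circ\alpha_{t_k})(u_k)\|_{2,\Tr}\leq\kappa\|p\|_{2,\Tr}$; the conclusion then follows from \cite[Theorem 3.5]{houdayer8} exactly as in Theorem \ref{normalizer1}. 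The spectral gap step producing a nonzero $v$ with $xv=v\alpha_s(x)$ is formal and identical to Step (1) of Theorem \ref{uniform}, after which transversality yields the displayed estimate. \textbf{I expect the lift from $s$ to $1$ and the argument that $v=0$ to be the main obstacle}: in Theorem \ref{uniform} these rested on the clean identity $\alpha_1(M_1)=M_2$ of the Gaussian double, whereas here $\alpha_1=\ast_i\mathrm{Ad}(u_i)$, so one must instead control $\alpha_1(P)'\cap p\widetilde{M}p$ and the relevant conditional expectations through the amalgamated free product intertwining results of \cite{ipp} and \cite[Theorem 2.4]{houdayer4}, the point at which the reduced-word combinatorics of Claim \ref{esperance} becomes genuinely more delicate.

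\textbf{Conclusion.} Both parts then finish by contradiction as in Theorem B, differing only in the source of the c.m.a.p. recorded above. Suppose the relevant algebra ($\mathcal{M}$ in (1), $\mathcal{N}$ in (2)) is nonamenable with a Cartan subalgebra $A$; set $\psi=\tau\circ F\circ E$ so that $A\subset\mathcal{M}^\psi$ is diffuse, form the core with respect to $\psi$, and note $\lambda^\psi(\R)''\subset A'\cap M$. Since $N=\mathcal{N}\rtimes_{\sigma^\psi}\R$ is nonamenable and $\lambda^\psi(\R)''$ is diffuse with semifinite trace, choose a $\Tr$-finite $p\in\lambda^\psi(\R)''$ with $pNp$ nonamenable (here $N=M$ in case (1)). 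The subalgebra $P_0=(A\overline{\otimes}\lambda^\psi(\R)'')p$ is abelian, hence amenable, and regular in $pNp$ by the regularity of $A\overline{\otimes}\lambda^\psi(\R)''$ in $N$ together with Proposition \ref{masa}, so its normalizer inside $pMp$ is nonamenable. The core normalizer estimate then forces $P_0\preceq_M L(\R)$, whence $Ap\preceq_M\lambda^\varphi(\R)''$. Since $A$ is diffuse this contradicts Proposition \ref{diffuse}, which is stated for an arbitrary von Neumann algebra and an arbitrary faithful normal state and therefore applies verbatim; this contradiction proves both dichotomies.
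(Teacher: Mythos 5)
Your setup is fine as far as it goes: the IPP deformation $\alpha_s=\ast_i\Ad(\exp(\mathrm{i}sh_i))$ on $\widetilde{\mathcal{M}}=\ast_i(\mathcal{M}_i\ast L(\Z))\cong\mathcal{M}\ast L(\F_{|I|})$, its extension to the cores, transversality, the weak containment lemma (which indeed uses only amenability of $L(\R)$), and the sources of the c.m.a.p.\ are all correct. The fatal flaw is in the step you describe as merely ``the main obstacle'': the analog of Theorem \ref{uniform}/Corollary \ref{uniformcorollary} that your whole argument rests on, namely that $P\npreceq_M L(\R)$ forces non-uniform convergence of $(\alpha_s)$ on $\mathcal{U}(P)$, is \emph{false} for this deformation, under the theorem's own hypotheses. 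Indeed, on the subalgebra $M_i=\mathcal{M}_i\rtimes_\sigma\R\subset M$ the deformation is \emph{inner}: $\alpha_s(x)=u_i^sxu_i^{-s}$ with $u_i^s=\exp(\mathrm{i}sh_i)$ lying in the centralizer of $\widetilde{\varphi}$, hence commuting with $L(\R)$ and with $p$. For $x$ in the unit ball of $pM_ip$ one gets
\begin{equation*}
\|\alpha_s(x)-x\|_{2,\Tr}\;\leq\;\|(u_i^{s}-1)p\|_{2,\Tr}+\|(u_i^{-s}-1)p\|_{2,\Tr}\;=\;\bigl(\|u_i^{s}-1\|_{2,\widetilde{\varphi}}+\|u_i^{-s}-1\|_{2,\widetilde{\varphi}}\bigr)\,\Tr(p)^{1/2},
\end{equation*}
using $\Tr(ap)=\widetilde{\varphi}(a)\Tr(p)$ for $a\in\widetilde{\mathcal{M}}$, $p\in L(\R)$ $\Tr$-finite; this tends to $0$ uniformly on the \emph{entire} unit ball of $pM_ip$ as $s\to 0$. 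Yet whenever $\mathcal{M}_i$ is diffuse (allowed, indeed typical, in both cases — e.g.\ all $\mathcal{M}_i$ hyperfinite and diffuse in case (2)), $pM_ip$ contains diffuse subalgebras $P$ with $P\npreceq_M L(\R)$ by Proposition \ref{diffuse}. So uniform convergence can never imply $P\preceq_M L(\R)$; the correct conclusion (this is \cite[Theorem 4.3]{ipp} and its semifinite version in \cite{houdayer4}) is the weaker dichotomy $P\preceq_M M_i$ for some $i$ (the amalgam case being subsumed since $L(\R)\subset M_i$). Consequently your ``core normalizer estimate'' is unproven, your verification of condition (2) of \cite[Theorem 3.5]{houdayer8} collapses, and the final contradiction with Proposition \ref{diffuse} only rules out the horn $P\preceq_M L(\R)$, not the horn that actually occurs.

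Handling the extra horn $P\preceq_M M_i$ is where the real content of the theorem lies, and it is exactly where the hypotheses you never invoke must enter. From $P\npreceq_M L(\R)$ and the control of quasi-normalizers in amalgamated free products (\cite[Theorem 1.1]{ipp}, \cite[Theorem 2.4]{houdayer4}), regularity of the Cartan subalgebra conjugates the whole normalizer $\mathcal{N}_{pMp}(P)''$ into a corner of $M_i^n$. In case (2) this contradicts nonamenability because $M_i$ is then the core of a \emph{hyperfinite} algebra, hence amenable, and finite amenable algebras have only amenable subalgebras; in case (1) it conjugates $pMp$ itself into some $M_i$, which must then be contradicted using the free product structure — further arguments from \cite{{ipp},{houdayer4},{houdayer7}} that have no counterpart in the proof of Theorem B. The telltale sign in your write-up is that hyperfiniteness of the $\mathcal{M}_i$ is used only to obtain the c.m.a.p.: if that sufficed, your proof of (2) would show that every conditioned diffuse subalgebra of a c.m.a.p.\ free product is hyperfinite or Cartan-free, and applying this to $\mathcal{N}=\mathcal{M}_1\subset\mathcal{M}_1\ast L(\Z)$ with $\mathcal{M}_1$ one of the nonamenable c.m.a.p.\ factors with Cartan subalgebra of \cite{ozawapopa} shows this stronger statement cannot hold. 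So this is not a technical obstacle to be absorbed by citing \cite{ipp}; the statement you planned to transcribe from Theorem B is false in this setting, and the proof must be restructured around the $P\preceq_M M_i$ alternative.
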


Observe that in $(2)$, a free product of hyperfinite von Neumann
algebras automatically has the complete metric approximation property
by \cite{RicardXu}.

\bigskip

\end{document}